\theoremstyle{plain}
\newtheorem{prop}{Proposition}
\newtheorem{thm}[prop]{Theorem}
\newtheorem{coro}[prop]{Corollary}
\newtheorem{lemm}[prop]{Lemma}
\newtheorem{conj}[prop]{Conjecture}
\newtheorem*{thm-int}{Theorem}
\theoremstyle{remark}
\newtheorem{rema}[prop]{Remark}
\theoremstyle{definition}
\newtheorem{defn}[prop]{Definition}
\newtheorem{cond}[prop]{Condition}
\def\G{{\mathcal G}}   % abel. pro-l-Galois group 
\def\KK{\boldsymbol{K}}
\def\FF{\boldsymbol{F}}
\def\kk{\boldsymbol{k}}
\def\no{\noindent}
\def\rk{{\rm rk}}
\newcommand{\trdeg}{{\rm tr}\, {\rm deg}}
\newcommand{\Ker}{{\rm Ker}}
\def\lra{\longrightarrow}
\def\ra{\rightarrow}
\def\A{{\mathbb A}}
\def\F{{\mathbb F}}
\def\P{{\mathbb P}}
\def\Q{{\mathbb Q}}
\def\Z{{\mathbb Z}}
\def\N{{\mathbb N}}
\def\bP{{\mathbb P}}
\def\ma{{\mathfrak a}}
\def\mc{{\mathfrak c}}
\def\mo{{\mathfrak o}}
\def\mm{{\mathfrak m}}
\def\ml{{\mathfrak l}}
\def\mq{{\mathfrak q}}
\begin{document}
\title[Homomorphisms and algebraic dependence]{Homomorphisms of multiplicative groups of fields 
preserving algebraic dependence}

\author{Fedor Bogomolov}
\address{Courant Institute of Mathematical Sciences, N.Y.U. \\
 251 Mercer str. \\
 New York, NY 10012, U.S.A.}
\address{National Research University Higher School of Economics, 
Russian Federation \\
AG Laboratory, HSE \\ 
Usacheva str., Moscow, Russia, 117312}

\email{bogomolo@cims.nyu.edu}

\author{Marat Rovinsky}
\address{National Research University Higher School of Economics, 
Russian Federation \\
AG Laboratory, HSE \\ 
Usacheva str., Moscow, Russia, 117312}
\email{marat@mccme.ru}

\author{Yuri Tschinkel}
\address{Courant Institute of Mathematical Sciences, N.Y.U. \\
 251 Mercer str. \\
 New York, NY 10012, U.S.A.}
\address{Simons Foundation\\
160 Fifth Av. \\
New York, NY 10010, U.S.A.}
\email{tschinkel@cims.nyu.edu}        

\keywords{Field theory, valuations}

\begin{abstract}
We study homomorphisms of multiplicative groups of fields 
preserving algebraic dependence and show that such 
homomorphisms give rise to valuations. 
\end{abstract}

\maketitle

\setcounter{section}{0}       
\section*{Introduction}
\label{sect:introduction}

In this paper we formulate and prove a version of the 
Grothendieck section conjecture. For function fields of algebraic varieties over
algebraically closed ground fields this conjecture
states, roughly, that existence of group-theoretic sections of 
homomorphisms of their absolute Galois groups 
implies existence of geometric sections of morphisms 
of models of these fields. 

In detail, let $k$ be an algebraically closed 
field and $K=k(X)$ the function field of an algebraic variety  
$X$ over $k$. Let $G_K$ be the absolute Galois group of $K$.
Fix a prime $\ell$ not equal to the characteristic of $k$ and let $\G_K$ be the 
maximal pro-$\ell$-quotient of $G_K$, the Galois group of the maximal $\ell$-extension of $K$. 
Write 
$$
\G^a_K=\G_K/[\G_K,\G_K] \quad \text{ and } \quad \G^c_K:=\G_K/[\G_K,[\G_K,\G_K]],
$$ 
for the abelianization and its canonical central extension:
\begin{equation}
\label{eqn:central}
1\ra \mathcal Z_K\ra \G^c_K\stackrel{\pi_a}{\lra} \G^a_K\ra 1.
\end{equation}
%In our situation, $\G^a_K$ is  a torsion-free $\Z_{\ell}$-module of infinite rank. 
Let $\Sigma_K=\Sigma(\G^c_K)$ be the set of topologically noncyclic 
subgroups $\sigma\subset \G^a_K$ whose 
preimages $\pi_a^{-1}(\sigma)\subset \G^c_K$ are abelian. 
It is known that function fields $K=k(X)$ of transcendence degree $\ge 2$ 
over $k=\bar{\F}_p$ are determined, 
modulo purely inseparable extensions, by
the pair $(\G^a_K, \Sigma_K)$ 
\cite{bt0}, \cite{bt1}, and \cite{pop}.

This raises the question of {\em 
functoriality}, i.e., the 
reconstruction of rational morphisms between algebraic varieties from 
continuous homomorphisms of absolute Galois groups of their function fields. This general fundamental 
question was proposed by Grothendieck and lies at the core of the Anabelian Geometry Program. 
%(see \cite{},  for key results and comprehensive surveys).

The main open problem in this program 
relates to a Galois-theoretic criterium 
for the existence of rational sections of fibrations.
Let  
$$
\pi:X\to Y,
$$
be a fibration with connected generic fiber of dimension at 
least 1 over a base $Y$ of dimension $\ge 2$. 
This defines a field embedding 
$$
\pi^{\ast}:k(Y)\hookrightarrow k(X),
$$ 
with image of $L:=k(Y)$ algebraically 
closed in $K:=k(X)$. Dually, we have 
a surjective homomorphism of absolute Galois groups 
(restriction map) 
$$
G_{K}\to G_{L},
$$ 
as well as induced homomorphisms
$$
\G^c_{K}\to\G^c_{L}, \quad  
\G^a_{K}\to\G^a_{L}.
$$ 
A minimalistic version of Grothendieck's {\em Section conjecture} would be:

\begin{conj}
\label{conj:2}
Assume that $\pi_a:\G^a_{K}\to \G^a_{L}$ admits a section 
\begin{equation}
\label{eqn:sa}
\xi_a:\G^a_{L}\to \G^a_{K}
\end{equation}
such that
\begin{equation}
\label{eqn:cond}
\xi_a(\Sigma_{L})\subset\Sigma_{K}.
\end{equation}
Then there exist a finite 
purely inseparable extension 
$$
\iota^*:L\hookrightarrow L'=k(Y')
$$ 
and a rational 
map 
$$
\xi:Y'\to X,
$$ 
such that 
$$
\xi^*\circ\pi^*(L) = \iota^*(L)\subset L'.
$$
Thus $\xi(Y')$ is a section over $Y$, modulo purely inseparable extensions.
\end{conj}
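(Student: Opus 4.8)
The plan is to transport the hypothesis on $\G^a$ to a statement about multiplicative groups, extract a valuation from it, and then geometrize via the valuative criterion of properness. \emph{Passage to multiplicative groups.} Since $k$ is algebraically closed and $\ell\neq\ch k$, Kummer theory identifies $\H^1(K,\Z_\ell(1))$ with the $\ell$-adic completion of $K^*$, which, $k^*$ being $\ell$-divisible, is canonically $\widehat{K^*/k^*}$; dually $\G^a_K\cong\Hom_{\mathrm{cont}}(\widehat{K^*/k^*},\Z_\ell(1))$, and similarly for $L$. The embedding $\pi^*\colon L^*/k^*\hookrightarrow K^*/k^*$ induces $\pi_a$, so by Pontryagin reflexivity of these $\Z_\ell$-modules the section $\xi_a$ of \eqref{eqn:sa} dualizes to a retraction
\[
\psi\colon\widehat{K^*/k^*}\lra\widehat{L^*/k^*},\qquad\psi\circ\pi^*=\mathrm{id}.
\]
By the dictionary between commuting liftable pairs in $\G^a$ and algebraically dependent pairs of rational functions modulo constants, condition \eqref{eqn:cond} translates into the statement that $\psi$ --- which one first checks descends from the completion to an honest homomorphism $K^*/k^*\to L^*/k^*$, the relevant $\Sigma$-subgroups being cut out by genuine transcendence degree one subfields --- \emph{preserves algebraic dependence} in the sense studied in this paper.

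\emph{From the retraction to a valuation.} We then apply the valuation-theoretic results of this paper to the algebraic-dependence-preserving retraction $\psi$: it gives rise to a valuation $\nu$ of $K$ over $k$. The identity $\psi\circ\pi^*=\mathrm{id}$ forces $\nu$ to be trivial on $\pi^*(L)$ and forces its residue field $K_\nu$ to contain the image of $L$; comparing transcendence degrees gives $\trdeg_k K_\nu=\trdeg_k L$, and feeding the data induced on $K_\nu$ into the Bogomolov--Pop reconstruction theorem for $L$ identifies $K_\nu$ with $L$ up to a finite purely inseparable extension. Equivalently, $\psi$ is realized by a place $K\dashrightarrow L'$, with $L\subseteq L'$ purely inseparable, restricting to the identity on $L$.

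\emph{Geometrization.} Choose a proper model $X$ of $K$ over $Y$, so the generic fiber $X_{k(Y)}$ is proper over $k(Y)=L$. As $\nu$ is trivial on $L$, its valuation ring dominates the generic point of $Y$; the valuative criterion of properness then produces a center $x\in X_{k(Y)}$ of $\nu$, a point whose residue field lies between $L$ and $K_\nu$, hence equals $L$ modulo purely inseparable extension. Such a point is exactly an $L'$-rational point of $X_{k(Y)}$, that is, a rational map $\xi\colon Y'\dashrightarrow X$ over $Y$ with $k(Y')=L'$, satisfying $\xi^*\circ\pi^*(L)=\iota^*(L)\subset L'$. The purely inseparable ambiguity $\iota^*$ is forced by the fact that the reconstruction of $K$ and of $L$ from their multiplicative (equivalently, abelian Galois) data is canonical only up to Frobenius twists, which must be tracked throughout.

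\emph{The main obstacle.} The crux is the second step. The valuation-theoretic input in general produces a whole fan of valuations of $K$ compatible with $\psi$, and one must isolate the one whose residue field has transcendence degree exactly $\trdeg_k L$ and --- the genuinely hard point --- show that this residue field is $L$ itself up to inseparability, rather than merely a field admitting an embedding of $L$. This requires combining the full force of the retraction property with the reconstruction theorem for $L$, and carrying it out uniformly for fibrations of arbitrary relative dimension over bases of dimension $\ge 2$ is precisely the content of the conjecture; the reduction of the first step is what makes it accessible.
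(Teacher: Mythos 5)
This statement is labeled a \emph{Conjecture} in the paper, and the paper does not prove it: the authors explicitly present Theorem~\ref{principal_theorem} only as a ``rational'' version of the minimalistic section conjecture, so there is no proof of record to compare yours against. Your text is, by your own framing, a strategy rather than a proof, and the two places where it is thinnest are exactly the places the paper identifies as open. First, the dual of the section $\xi_a$ is a homomorphism $\hat\psi\colon\hat K^{\times}\to\hat L^{\times}$ of pro-$\ell$ completions; your parenthetical ``which one first checks descends \ldots to an honest homomorphism $K^{\times}/k^{\times}\to L^{\times}/k^{\times}$'' is not something one can currently check. There is no known argument forcing $\hat\psi(K^{\times}/k^{\times})\subseteq L^{\times}/k^{\times}$ rather than landing in the completion, and the paper makes this the hypothesis (``if the restriction of $\hat\psi$ is rational\ldots'') under which the translation of \eqref{eqn:cond} into preservation of algebraic dependence, and hence the applicability of Theorem~\ref{principal_theorem}, becomes available. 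Asserting the descent is assuming the main missing ingredient.

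Second, even granting rationality, Theorem~\ref{principal_theorem} does not hand you a single valuation with the right residue field: its conclusion is a trichotomy (P), (V), (VP), proved under the extra hypothesis (AD), and you would need to rule out the subfield case (P) and the mixed case (VP) using the retraction identity $\psi\circ\pi^{*}=\mathrm{id}$, then show the valuation in case (V) has residue field equal to $L$ up to purely inseparable extension rather than merely containing an image of $L$ --- a step you yourself flag as ``precisely the content of the conjecture.'' The geometrization via the valuative criterion of properness is fine once that identification is in hand, but as written your argument has two genuine gaps (the descent from completions, and the identification of the residue field), and closing either of them would go beyond what the paper establishes.
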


Conjecture~\ref{conj:2} is closely 
related to questions considered in this note. 
Recall that, by Kummer theory, 
$$
\G^a_{K}=\mathrm{Hom}(K^{\times},\mathbb Z_{\ell}(1)),
$$ 
and that \eqref{eqn:sa} induces the 
dual homomorphism of (profinite) pro-$\ell$-completions of multiplicative groups
$$
\hat{\psi}:\hat{K}^{\times}\to\hat{L}^{\times}.
$$ 
Then \eqref{eqn:cond}
says 
that $\hat{\psi}$ respects the skew-symmetric pairings on 
$\hat{K}^{\times}$ and $\hat{L}^{\times}$, with values 
in the second Galois cohomology group of the corresponding field. 
The groups $\hat{K}^{\times}$ and 
$\hat{L}^{\times}$ contain $K^{\times}/k^{\times}$ and 
$L^{\times}/k^{\times}$, respectively. If 
the restriction $\psi$ of 
$\hat{\psi}$ to $K^\times/k^\times$ is ``rational'', i.e., 
$$
\psi:K^{\times}/k^{\times}\subseteq L^{\times}/k^{\times} \subset \hat{L}^\times, 
$$ 
then $\psi$ respects algebraic dependence, 
mapping algebraically dependent elements in $K^{\times}$ 
to algebraically dependent elements of $L^{\times}$ (modulo $k^\times$). 
For function fields this is 
equivalent to \eqref{eqn:cond}. 
This relates the ``minimalistic'' version of the Section conjecture for 
``rational'' maps to our main result, which we now explain.

Let $\nu$ be a nonarchimedean valuation of $K$, i.e., a 
homomorphism 
$$
\nu: K^\times \ra\Gamma_{\nu}
$$
onto a totally ordered group such that the induced map
$$
\nu: K \ra \Gamma_{\nu}\cup \{ \infty\}, \quad \nu(0)=\infty,
$$
satisfies a nonarchimedean triangle inequality. Let 
$$
\mm_{K,\nu}\subset \mo_{K,\nu}, \quad \KK_{\nu}:=\mo_{K,\nu}/\mm_{K,\nu},
$$
be the maximal ideal, valuation ring, and residue field 
with respect to $\nu$, respectively. If $K|k$ is a field extension 
and $\nu$ a valuation of $K$, then its restriction to $k$ 
is also a valuation; we have
$$
\mo_{K,\nu}^\times \cap k^\times =\mo_{k,\nu}^\times, \quad 
\mo_{K,\nu}^\times/ \mo_{k,\nu}^\times \subseteq K^\times/k^\times,
$$
and a natural surjection
$$
\mo_{K,\nu}^\times/ \mo_{k,\nu}^\times \longrightarrow\hspace{-3mm}\to 
\KK_{\nu}^{\times}/\kk_{\nu}^{\times}.
$$
We consider extensions of fields
$$
k\subseteq \tilde{k} \subseteq \tilde{k}_a\subset K,
$$
where $k$ is the prime subfield of $K$, i.e., 
$k=\mathbb F_p$ or $\mathbb Q$, and $\tilde{k}_a\subset K$ 
the algebraic closure of $\tilde{k}$ in $K$, i.e., the set of 
all algebraic elements over $k$ contained in $K$.
Assume that $\bar{x}_1, \bar{x}_2\in K^\times/k^\times$ 
safisfy 
\begin{equation}
\label{eqn:dependent}
\trdeg_{\tilde{k}}(\tilde{k}(x_1,x_2))\le 1, 
\end{equation}
for their lifts $x_1,x_2\in K^\times$; 
this does not depend on the choice of lifts. We write $x_1\sim_{\tilde k} x_2$ 
and say that $x_1$ and $x_2$ are contained 
in the same one-dimensional field; clearly
$1\sim_{\tilde{k}} x$, for all $\bar{x}\in K^\times/k^\times$. 
From now on, we 
use the same notation for an element $x\in K^\times$ and its image in $K^\times/k^\times$. 
Let 
$$
l\subseteq \tilde{l}\subseteq \tilde{l}_a\subset L
$$ 
be field extensions, 
where $l$ is the prime subfield of $L$,  
and let
$$
\psi:  K^\times/k^\times \ra L^\times /\tilde{l}^\times
$$
be
a homomorphism of multiplicative groups. 
We say that $\psi$ preserves algebraic 
dependence with respect to 
$\tilde{k}, \tilde{l}$  if
$$
x_1\sim_{\tilde{k}} x_2 \quad \Rightarrow \quad
\psi(x_1)\sim_{\tilde{l}} \psi(x_2), \quad \forall\, x_1,x_2\in K^\times/k^\times. 
$$

\begin{thm} 
\label{principal_theorem} 
Let $k\subseteq \tilde{k}\subset K$ and $l\subseteq \tilde{l}\subset L$ 
be field extensions as above. 
Assume that $\tilde{l}=\tilde{l}_a$ and that  
there exists a homomorphism
\begin{equation}
\label{eqn:psii}
\psi:  K^\times/k^\times \ra L^\times /\tilde{l}^\times,
\end{equation}
such that 
\begin{itemize}
\item 
$\psi$ preserves algebraic 
dependence with respect to $\tilde{k}$ and $\tilde{l}$;
\item 
there exist
$$
y_1,y_2\in \psi(K^\times/k^\times), \quad \text{ such that } \quad 
y_1\not\sim_{\tilde{l}} y_2;
$$
\item $\psi$ satisfies Assumption (AD) of Section~\ref{sect:proof}.  
\end{itemize}
Then either
\begin{itemize}
\item[(P)] there exists a field $F\subset K$
such that $\psi$ factors through
$$
K^{\times}/k^{\times}\longrightarrow\hspace{-3mm}\to 
K^{\times}/F^{\times},
$$
\item[(V)]
there exists a nontrivial valuation
$\nu$ on $K$
such that the restriction of $\psi$ to 
$$
\mo_{K,\nu}^{\times}/\mo_{k,\nu}^{\times}\subseteq K^\times/k^\times
$$ 
is trivial on 
$$
(1+\mm_{\nu})^\times/\mo_{k,\nu}^{\times} 
$$
and it factors through the reduction map 
$$\mo_{K,\nu}^{\times}/\mo_{k,\nu}^{\times}\longrightarrow\hspace{-3mm}\to 
\KK_{\nu}^{\times}/\kk_{\nu}^{\times}\ra L^\times/\tilde{l}^\times,
$$ 
\item[(VP)]
there exist a nontrivial valuation
$\nu$ on $K$ and a field $\FF_{\nu}\subset \KK_{\nu}$ such that 
the restriction of $\psi$ to $\mo_{K,\nu}^{\times}/\mo_{k,\nu}^{\times}$ 
factors through 
$$\mo_{K,\nu}^{\times}/\mo_{k,\nu}^{\times}\longrightarrow\hspace{-3mm}\to 
\KK_{\nu}^{\times}/\FF_{\nu}^{\times}\ra L^\times/\tilde{l}^\times.
$$ 
\end{itemize}
\end{thm}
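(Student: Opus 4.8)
The plan is to restrict $\psi$ to the one-dimensional subfields of $K$ over $\tilde k$, invoke the classification of algebraic-dependence-preserving homomorphisms in the one-dimensional case, and then reassemble the resulting valuation-theoretic data using two-dimensional subfields and the multiplicativity of $\psi$. Note first that the hypotheses force $\trdeg_{\tilde k}K\ge 2$, since otherwise all elements of $K^\times/k^\times$ would be pairwise $\sim_{\tilde k}$ and hence all of their images pairwise $\sim_{\tilde l}$, contradicting $y_1\not\sim_{\tilde l}y_2$. For $x\in K^\times/k^\times$ transcendental over $\tilde k$ let $\mathbb E_x\subset K$ be the algebraic closure of $\tilde k(x)$ in $K$; it is the function field of a curve over $\tilde k_a$, and the subgroup $\mathbb E_x^\times/k^\times\subset K^\times/k^\times$ is exactly $\{z: z\sim_{\tilde k}x\}$, so two distinct such subgroups meet precisely in $\tilde k_a^\times/k^\times$. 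The elements of $\mathbb E_x^\times/k^\times$ are pairwise $\sim_{\tilde k}$, hence their images are pairwise $\sim_{\tilde l}$; since $\tilde l=\tilde l_a$, a set of pairwise algebraically dependent elements of $L^\times/\tilde l^\times$ is either trivial or contains a $\tilde l$-transcendental element and so lies in a unique minimal one-dimensional subfield $\mathbb E'_x\subset L$ relatively algebraically closed over $\tilde l$. Thus either $\mathbb E_x^\times/k^\times\subseteq\Ker\psi$ (``$\mathbb E_x$ is contracted''), or $\psi$ restricts to a homomorphism $\psi_x:\mathbb E_x^\times/k^\times\to(\mathbb E'_x)^\times/\tilde l^\times$ between multiplicative groups of function fields of curves.

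Next I apply the one-dimensional case to each noncontracted $\psi_x$: its kernel is the preimage in $\mo_{\mathbb E_x,\nu_x}^\times$ (modulo the prime field) of a subfield $\FF_x^\times\subseteq\KK_{\nu_x}^\times$ for a suitable valuation $\nu_x$ of $\mathbb E_x$ and a subfield $\FF_x$ of its residue field $\KK_{\nu_x}$, where $\nu_x$ may be trivial and $\FF_x$ may be all of $\KK_{\nu_x}$; equivalently $\psi_x$ is, up to its kernel, reduction along $\nu_x$ followed by an injective homomorphism on the residue curve. Assumption (AD) is used precisely here, to rule out the degenerate profiles of $\psi_x$ that the one-dimensional classification would otherwise have to accommodate; combined with the hypothesis that $y_1\not\sim_{\tilde l}y_2$ for some $y_i$ in the image — which forces the fields $\mathbb E'_x$ to vary genuinely, so that $\psi$ is nontrivial and its image is not confined to a single one-dimensional field — it guarantees that the data $(\nu_x,\FF_x)$ is available and nondegenerate for a sufficiently rich family of $x$, and in particular that not all $\mathbb E_x$ are contracted.

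It remains to glue this data. Fix $x\not\sim_{\tilde k}y$ and let $M\subset K$ be the algebraic closure of $\tilde k(x,y)$, the function field of a surface; $\psi|_{M^\times/k^\times}$ satisfies the same hypotheses and $M$ contains the rich family of curves $\mathbb E_u$, $u\in M^\times$. The key assertion is that the per-curve valuations patch: $\mo_M:=\{0\}\cup\{u\in M^\times: u\in\mo_{\mathbb E_u,\nu_u}\}$ is a valuation ring of $M$. The only nontrivial point is closure under addition; via the identity $u+v=u\cdot(1+v/u)$ this reduces to comparing the $\nu_u$-value of $u$ with the $\nu_v$-value of $v$ by working inside the curve $\mathbb E_{v/u}$, and the comparison is forced by $\psi(uv)=\psi(u)\psi(v)$, which links the three a priori distinct restrictions $\psi_u$, $\psi_v$ and $\psi_{uv}$. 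The residue subfields $\FF_u$ patch similarly to a subfield $\FF_M$ of the residue field of $M$. Letting $x,y$ vary and patching this data across the directed family of finitely generated subfields of $K$ (to which the same analysis applies), one obtains a valuation $\nu$ of $K$ and a subfield $\FF_\nu\subset\KK_\nu$ with $\Ker\psi\cap(\mo_{K,\nu}^\times/\mo_{k,\nu}^\times)$ equal to the preimage of $\FF_\nu^\times/\kk_\nu^\times$ under the reduction map. If $\nu$ is trivial this is conclusion (P) with $F:=\FF_\nu$; if $\FF_\nu$ reduces to $\kk_\nu$ (modulo algebraic elements) it is conclusion (V); otherwise it is conclusion (VP).

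The heart of the argument — and the step I expect to be the main obstacle — is the coherence needed in this gluing: a priori the valuations attached to different curves through a common point of a surface need not be compatible, and forcing compatibility requires using simultaneously that $\psi$ is multiplicative on the whole surface (not merely on each curve) and the rigidity of degenerations of curves within a pencil. Turning the additive relation $u+v\in\mo_M$ into a statement to which $\psi$ can be applied, and keeping control of the algebraic (as opposed to transcendental) elements and of the degenerate curves permitted by Assumption (AD), is where the substantive technical work lies.
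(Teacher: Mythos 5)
There is a genuine gap, and it sits at the very first substantive step: the ``one-dimensional classification'' you invoke does not exist. On a one-dimensional subfield $\mathbb E_x\subset K$ (the algebraic closure of $\tilde k(x)$ in $K$) \emph{all} nonconstant elements are pairwise $\sim_{\tilde k}$, so the hypothesis that $\psi$ preserves algebraic dependence is vacuous when restricted to $\mathbb E_x^\times/k^\times$: it only says that the image lands in a single dependency class, i.e.\ in $(\mathbb E'_x)^\times/\tilde l^\times$ for some one-dimensional $\mathbb E'_x$, and imposes no further constraint. Since $\mathbb E_x^\times/k^\times$ is just a huge abelian group (for a curve over an algebraically closed constant field it is free abelian), an arbitrary homomorphism from it to another abelian group need not have kernel of the form ``preimage of a subfield of a residue field under reduction at a valuation $\nu_x$''. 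So the per-curve data $(\nu_x,\FF_x)$ that your gluing step is supposed to patch together simply cannot be extracted curve by curve; there is nothing to glue. (Your reading of Assumption (AD) is also off: in the paper it is a statement about the images of lines of type $(\bar{\mathrm I})$ in $\P(K)$ over the prime field, not a device to exclude degenerate behaviour of restrictions to curves.)

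The actual source of rigidity in the paper is orthogonal to your decomposition: one views $\P(K)=K^\times/k^\times$ as a projective space over the \emph{prime} field $k=\F_p$ or $\Q$, and studies the restriction of $\psi$ to projective \emph{planes} $\Pi(1,x,y)$ with $\psi(x)\not\sim\psi(y)$, i.e.\ configurations that mix the additive structure over $k$ with two independent dependency classes in the target. There the Hales--Straus theorem on $3$-colorings of $\P^2(k)$ and the flag-map machinery (Propositions~\ref{prop:plane}, \ref{prop:main}, Proposition~\ref{prop:17}) force each such plane to be either injective for $\psi$, of flag type, or induced from $\P^2(\Z/p)$; one then builds the subgroup $\mathfrak U$ generated by lines of injectivity, checks it meets every line of $\P(K)$ in a set as in Condition~\ref{cond:flag}, and deduces that $\mathfrak U$ is $F^\times/k^\times$ or $\mo_{K,\nu}^\times$, with the degenerate cases handled by flag maps giving a valuation directly. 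In other words, the two-dimensionality you defer to the ``gluing'' stage is where the whole proof lives, and the intra-curve analysis you start from carries no information; to repair the proposal you would have to replace the one-dimensional step by an argument on planes over the prime subfield of exactly this kind.
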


In the geometric setting treated in \cite{GTPG}, 
when $K=\tilde{k}(X)$ is a function field of an algebraic variety 
$X$ over $\tilde{k}=\bar{\F}_p$, 
the center of the valuation $\nu$ arising in case (V) is, birationally,  
the image of the section, and the above theorem can be viewed as a 
``rational'' version of the minimalistic section 
conjecture. 
Here we extend the argument in \cite{GTPG} from function fields 
to arbitrary fields, under the additional technical assumption (AD) on $\psi$, 
which holds for $K$ of positive characteristic.

The idea of proof is to reduce the problem 
to a question in plane projective geometry 
over the prime subfield $k$. We 
view $\P(K):=K^{\times}/k^\times$ 
as a projective space over $k$. 
To establish Theorem ~\ref{principal_theorem} it suffices to 
show the existence of a subgroup 
$\mathfrak U\subset K^\times/k^\times$ 
such that:

\begin{cond}
\label{cond:flag}
For every projective line $\ml\subset \P(K)$, 
$\mathfrak U\cap \ml$
is either
\begin{itemize}
\item[(1)] the line $\ml$,
\item[(2)] a point $\mq\in \ml$, 
\item[(3)] the affine line $\ml\setminus \mq$, or
\item[(4)] if $k=\mathbb Q$, a set projectively equivalent to 
$$
\mathbb Z_{(p)}\subset \mathbb A^1(\mathbb Q)\subset \mathbb P^1(\Q),
$$ 
the set of rational numbers with denominator coprime to $p$. 
\end{itemize}
\end{cond}

Indeed, such a subgroup is necessarily either $F^\times/k^\times$ for some subfield 
$F\subset K$, or
$\mo_{K,\nu}$, for some valuation $\nu$ (see Section~\ref{sect:proof}).  
By construction, the homomorphism $\psi$
will satisfy the cases 
(P) or (V) in Theorem~\ref{principal_theorem}, respectively.

To find such $\mathfrak U$, we 
use results of \cite{HalesStraus} and \cite{AbSubgrps}. 
First we deduce that
the restriction of $\psi$ to 
every plane $\mathbb P^2\subset \P(K)$ is 
either an embedding or is induced by a natural construction from 
some nonarchimedean valuation (see Section~\ref{sect:lines}). 
We distinquish two cases:
\begin{itemize}
\item there exists a line $\ml\subset \P(K)$ such that 
the restriction of $\psi$ to $\ml$ is injective,
\item no such lines exist.
\end{itemize}
In the first case, property $(4)$ of Condition~\ref{cond:flag} 
does not occur, and the proof works uniformly for 
$k=\mathbb F_p$ or $\mathbb Q$. In the second case, the proofs are
slightly different, leading to 
a case-by-case analysis in Section~\ref{sect:lines}. 

\

\no
{\bf Acknowledgments.} 
This work was partially supported by 
Laboratory of Mirror Symmetry NRU HSE, RF grant 14.641.31.0001.
The first and second authors were funded by
the Russian Academic Excellence Project `5-100'.
The first author was also supported by a Simons Fellowship 
and by the EPSRC program grant EP/M024830. 
The third author was partially supported by the NSF grant  1601912.

\section{Projective geometry}
\label{sect:prelim}

Let $\P$ be a projective space over a field $k$ and 
$\Pi(\mq_0,\dots,\mq_n)\subseteq \P$ the 
projective envelope of points $\mq_0,\dots,\mq_n\in \P$. 
Working with lines and planes, we write 
$$
\ml=\ml(\mq_0,\mq_1),\quad 
\text{resp.} \quad \Pi=\Pi(\mq_0, \mq_1,\ma_2),
$$
for a projective line through $\mq_0,\mq_1$, or  
a plane through $\mq_0,\mq_1,\mq_2$.

Let $\nu$ a nonarchimedean valuation of $k$, 
$\mo=\mo_{\nu}$ the corresponding valuation ring, 
and $\kk_{\nu}$ the residue field.  
Fixing a lattice 
$$
\Lambda\simeq \mo^{n+1}\hookrightarrow k^{n+1},
$$
we obtain a natural surjection
\begin{equation}
\label{eqn:rho}
\rho=\rho_{\Lambda} : \P^n(k)\ra \P^n(\kk_{\nu}).
\end{equation}
A {\em 3-coloring} of $\P^2(k)$ is 
a surjection
\begin{equation}
\label{eqn:kappa}
\mc: \P^2(k)\ra \{ \bullet, \circ,\star\},
\end{equation}
onto a set of 3 elements,
such that 
\begin{itemize}
\item every $\ml\subset \P^2(k)$ is colored in exactly two colors, i.e., $\mc(\ml)$ consists of two elements.
\end{itemize}
A 3-coloring is called {\em trivial of type} 
\begin{itemize}
\item {\em I}: if there exists a line $\ml\subset \P^2$ such that $\mc$ is constant on 
$\P^2\setminus \ml$,
\item {\em II}: if there exists a point $\mq\in \P^2(k)$ such that for every $\ml\subset \bP^2$ containing $\mq$, $\mc$ is constant on $\ml\setminus \mq$. 
\end{itemize}
It was discovered early on, that such colorings are related to valuations, see, e.g., 
\cite{HalesStraus}. The same structure resurfaced in the study of {\em commuting} elements of Galois groups 
of function fields in \cite{CommElts}, exhibiting unexpected projective 
structures {\em within} $\G^a_K$. 
This was a crucial step in the recognition of inertia and decomposition 
subgroups in $\G^a_K$.

Precisely, we have (see \cite[Theorem 2]{HalesStraus} and  \cite{CommElts}):

\begin{prop}
\label{prop:plane} 
Assume that $\P^2(k)$ carries a 3-coloring. Then there exists a nonarchimedean valuation $\nu$
such that the coloring $\mc$ in \eqref{eqn:kappa} is induced from a trivial covering
$$
\mc_{\nu} : \P^2(\kk_{\nu})\ra  \{ \bullet, \circ,\star\}, 
$$
for some $\rho$ as in \eqref{eqn:rho}. 
\end{prop}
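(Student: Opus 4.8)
The statement is, in essence, \cite[Theorem 2]{HalesStraus} combined with the projective constructions of \cite{CommElts}; I describe how the argument is organized. The goal is to produce from $\mc$ a valuation ring $\mo\subset k$ together with a lattice $\Lambda\simeq\mo^{3}$ as in \eqref{eqn:rho}, in such a way that $\mc$ is constant along the fibres of $\rho=\rho_{\Lambda}$ and the induced coloring $\mc_{\nu}$ of $\P^{2}(\kk_{\nu})$ is trivial of type I or type II. The valuation $\nu$ is allowed to be trivial, in which case $\rho$ is the identity and the assertion is simply that $\mc$ itself is already trivial.

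One starts from the elementary consequences of the axiom: no line is monochromatic, and if two points $p,q\in\P^{2}(k)$ carry different colors then the line $\ml(p,q)$ contains no point of the third color. Fix a point $p$ with $\mc(p)=\star$. Every line through $p$ then has color set $\{\star,\bullet\}$ or $\{\star,\circ\}$, so the pencil of lines through $p$ -- a copy of $\P^{1}(k)$ -- acquires a $2$-coloring $f$, and for every $q\ne p$ with $\mc(q)\ne\star$ one has $\mc(q)=f(\ml(p,q))$. Hence $\mc$ is completely determined by the pair $\bigl(f,\ \mc^{-1}(\star)\bigr)$. The first substantial step is to analyze the constraint imposed by the $2$-color condition on lines \emph{not} through $p$ and to show that it forces one of two rigid shapes: either $\mc^{-1}(\star)$ is a union of lines through a common point (a ``fan'', which leads to type II), or $\mc^{-1}(\star)$ is the complement of a line $\ml_{\infty}$ and $f$ is, up to projective change of coordinates on $\ml_{\infty}$, pulled back along a reduction map from a $2$-coloring of a projective line over a residue field (which leads to type I).

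Once this is known, the valuation is read off as in \cite{HalesStraus}: in suitable affine coordinates on the relevant copy of $\P^{1}(k)$ (the pencil through $p$, or the line $\ml_{\infty}$), put
$$
\mo:=\{\,t\in k\ :\ \text{$[1:t]$ has the same $f$-color as a fixed reference point}\,\}.
$$
The point of \cite[Theorem 2]{HalesStraus} is precisely that, because $f$ extends to a $3$-coloring of the surrounding plane, this $\mo$ satisfies the axioms of a valuation ring: the incidences of lines in $\P^{2}(k)$ translate into closure of $\mo$ under addition and into the dichotomy $t\in\mo$ or $t^{-1}\in\mo$. Choosing $\Lambda\simeq\mo^{3}$ adapted to $p$ (respectively to $\ml_{\infty}$), one checks by construction that $\mc$ factors through $\rho_{\Lambda}$ and that the induced coloring $\mc_{\nu}$ of $\P^{2}(\kk_{\nu})$ is a fan (type II) or is constant off a line (type I).

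The hard part is the rigidity claimed in the second paragraph -- that a $2$-coloring of $\P^{1}(k)$ which extends to a $3$-coloring of the ambient plane is necessarily valuation-theoretic; this is where the arguments of \cite{HalesStraus} and \cite{CommElts} are used in full. Two degenerate situations need separate, purely combinatorial attention: when $k$ is algebraic over a finite field there are no nontrivial valuations, so one must verify directly -- reducing to the finite subfields $\F_{q}\subset k$ by an incidence count -- that $\mc$ is already trivial of type I or II; and when the residue field contains $\Q$ one must allow configurations modeled on $\Z_{(p)}\subset\P^{1}(\Q)$ and confirm that $\Lambda$ can still be chosen so that $\rho_{\Lambda}$ is onto. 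Both are routine after the valuation ring has been identified.
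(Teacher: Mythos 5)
First, note that the paper does not actually prove Proposition~\ref{prop:plane}: it is quoted from \cite[Theorem 2]{HalesStraus} and \cite{CommElts}. So the real question is whether your sketch correctly reproduces that argument, and it does not: the pivotal claim in your second paragraph is false in exactly the case the proposition is about. You assert that the two-color condition forces $\mc^{-1}(\star)$ to be either a union of lines through a common point or the complement of a line $\ml_{\infty}\subset\P^2(k)$. Take $k=\Q$, $\nu$ the $p$-adic valuation, $\Lambda=\Z_{(p)}^3$, and let $\mc=\mc_\nu\circ\rho_\Lambda$ where $\mc_\nu$ is the flag coloring of $\P^2(\F_p)$ ($\star$ on $\{\bar x_0\neq0\}$, $\circ$ on $\{\bar x_0=0,\ \bar x_1\neq 0\}$, $\bullet$ at $[0:0:1]$). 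This is a legitimate $3$-coloring (every line of $\P^2(\Q)$ surjects onto a line of $\P^2(\F_p)$ and picks up exactly two colors), but each color class is a union of $\rho_\Lambda$-fibres, i.e.\ a $p$-adic ``tube'': none of the three classes contains a single projective line of $\P^2(\Q)$, so none is a fan, and none is the complement of a line (the complement of the $\star$-class contains the non-collinear points $[0:1:0]$, $[0:0:1]$, $[p:1:0]$). So your dichotomy captures only colorings that are already trivial on $\P^2(k)$ itself (together with the hybrid ``constant off $\ml_\infty$, reduction-coloring on $\ml_\infty$'', which is again trivial of type I with the valuation playing no role on the plane); the genuinely valuation-induced colorings, which are the whole content of the statement, satisfy neither alternative, and the later steps (``the valuation is read off'', ``one checks that $\mc$ factors through $\rho_\Lambda$'') are built on this false reduction.

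Moreover, the step you yourself label ``the hard part'' is precisely what you delegate wholesale to \cite{HalesStraus} and \cite{CommElts}; since the paper's own treatment is nothing but that citation, your write-up does not supply a proof but rather inserts an incorrect intermediate structure in front of the citation. The correct organization is the reverse of yours: one first extracts the valuation ring directly from the coloring (in coordinates where suitable reference points carry distinct colors, a set of the shape $\{t\in k:\ \mc([1:t:0])=\mc([1:0:0])\}$ is shown, via line incidences in the plane, to be closed under addition and to satisfy $t\in\mo$ or $t^{-1}\in\mo$), and only afterwards identifies the color classes as preimages under $\rho_\Lambda$ of a trivial coloring of $\P^2(\kk_\nu)$; no a priori description of $\mc^{-1}(\star)$ as a fan or a line-complement in $\P^2(k)$ is available, and your argument would need to be restructured along these lines to be salvageable.
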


\section{Flag maps}
\label{sect:flag}

We will consider maps (respectively, homomorphisms)  
$$
f:\P\ra A
$$
from projective spaces over $k$ 
to a set (respectively, an abelian group). 
The map $f$ is called a {\em flag map} if its 
restriction $f_{\Pi}$ to every finite dimensional projective subspace $\Pi\subset \P$ is a flag map. 
For $k=\mathbb F_p$ and 
$$
f:\P^n(\F_p)\ra A,
$$
this means that there exists a flag of projective subspaces
\begin{equation}
\label{eqn:flag}
\P^n\supset \P^{n-1}\ldots \supset \P^1\supset \P^0=\mathfrak q
\end{equation}
such that $f$ is constant on $\P^i(\F_p)\setminus \P^{i-1}(\F_p)$, for all $i=1, \ldots, n$. 
For $k=\Q$ and 
$$
f:\P^n(\Q)\ra A,
$$
this means that either
\begin{itemize}
\item there is a flag as in \eqref{eqn:flag} so that $f$ is constant on 
 $\P^i(\Q)\setminus \P^{i-1}(\Q)$, for all $i=1, \ldots, n$, or
\item there exist a prime $p$, a surjection 
$$
\rho=\rho_{\Lambda} : \P^n(\Q)\ra  \P^n(\F_p)
$$
as in \eqref{eqn:rho}, and a flag map 
$$
\bar{f}: \P^n(\F_p)\ra A,
$$
such that 
$$
f= \bar{f}\circ \rho.
$$
\end{itemize}

\begin{prop} \cite{CommElts}
\label{prop:main}
Let 
$$
f:\P(K)=K^\times/k^\times\ra A
$$
be a group homomorphism which is also a flag map. 
Then there exist a valuation $\nu$ of $K$ 
and a homomorphism $r:\Gamma_{\nu}\ra A$ 
such that $f$ factors through
$$
K^\times/k^\times \stackrel{\nu}{\lra} \Gamma_{\nu} \stackrel{r}{\lra}  A.
$$
\end{prop}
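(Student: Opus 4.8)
The plan is to recover the valuation from the kernel $\mathfrak U:=\ker f\subseteq\P(K)=K^\times/k^\times$. Because $f$ is a homomorphism, $\mathfrak U$ is a subgroup; because $f$ is a flag map, the restriction of $f$ to every finite-dimensional projective subspace is a flag map, and this rigidly constrains the projective shape of $\mathfrak U$. First I would check that for every line $\ml\subset\P(K)$ the intersection $\mathfrak U\cap\ml$ is one of the configurations of Condition~\ref{cond:flag}, or empty. Then I would invoke the projective-geometric classification of such subgroups (as developed in Section~\ref{sect:proof}, resting on Proposition~\ref{prop:plane}; cf. \cite{HalesStraus}, \cite{CommElts}): a subgroup of $K^\times/k^\times$ whose intersection with every line has this shape is either $F^\times/k^\times$ for a subfield $k\subseteq F\subseteq K$, or of the form $\nu^{-1}(\Delta)$ for a valuation $\nu$ of $K$ and a subgroup $\Delta\subseteq\Gamma_{\nu}$. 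In the second case $f$ factors as
$$
K^\times/k^\times\stackrel{\nu}{\lra}\Gamma_{\nu}\lra\Gamma_{\nu}/\Delta\hookrightarrow A
$$
(when $k=\Q$ one replaces $\Gamma_{\nu}$ by $\Gamma_{\nu}/\nu(k^\times)$, which does not affect the statement), and composing the last two arrows yields the required $r:\Gamma_{\nu}\to A$. The first case will be shown to occur only when $f\equiv 0$, which is covered by the trivial valuation.

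For the line analysis: restricting to a line $\ml$, the flag-map hypothesis says that, for $k=\F_p$, $f|_\ml$ is constant off a single point $\mq$. If the common value is $0$, then $\mathfrak U\cap\ml$ is $\ml$ or $\ml\setminus\mq$; if it is nonzero, then $\mathfrak U\cap\ml$ is $\{\mq\}$ or $\emptyset$. For $k=\Q$ one is in the same situation unless $f|_\ml=\bar f\circ\rho$ for a reduction $\rho\colon\P^1(\Q)\to\P^1(\F_p)$ and a flag map $\bar f$ on $\P^1(\F_p)$; pulling back the corresponding $\F_p$-picture along $\rho$ produces exactly the complement of a ball, i.e. a configuration projectively equivalent to $\Z_{(p)}$ (or $\ml$, a point, or $\emptyset$). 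These are the configurations of Condition~\ref{cond:flag}, together with the empty set, which genuinely does occur — e.g. for $f$ the valuation at $\infty$ on $k(t)$ and the line spanned by the classes of $t$ and $t^2$ — and this is why the classification has to be phrased as $\nu^{-1}(\Delta)$ rather than only as $\mo_{K,\nu}^\times/\mo_{k,\nu}^\times$.

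To exclude the field alternative when $f\not\equiv 0$: a subgroup $F^\times/k^\times$ is a linear subspace, hence meets any line in at most one point or contains it. If $F=k$ then $f$ is injective, which is incompatible with the flag-map property of $f|_\ml$ (which forces $f|_\ml$ to take at most two values), so $\P(K)$ has no lines, i.e. $K=k$ and $f\equiv 0$. If $k\subsetneq F\subsetneq K$, choose $a\in F\setminus k$ and $b\in K\setminus F$; the line $\ml(a,b)$ meets $F^\times/k^\times$ only at $a$, so $f$ is constant and nonzero on $\ml(a,b)\setminus\{a\}$, placing these $\ge 2$ collinear points in a single coset $b\cdot(F^\times/k^\times)$ of $\mathfrak U$; but a multiplicative translate of a linear subspace again meets a line in at most one point unless it contains it, forcing $\ml(a,b)\subseteq b\cdot(F^\times/k^\times)$ and hence $f(a)\ne 0$, contradicting $a\in\ker f$. (The case $F=K$ is $f\equiv 0$.) Hence $\mathfrak U=\nu^{-1}(\Delta)$, and the factorization above completes the proof.

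The step I expect to be the main obstacle is the classification, i.e. passing from ``$\mathfrak U$ meets every line in a Condition~\ref{cond:flag} configuration'' to ``$\mathfrak U=\nu^{-1}(\Delta)$''. This is precisely the point where the $k$-linear (projective) structure of $\P(K)$ must be converted into the multiplicative datum of a valuation: one applies Proposition~\ref{prop:plane} plane by plane to obtain local valuation-type data, and then uses the group law on $K^\times/k^\times$ to glue these into a single valuation of $K$. It is also here that the dichotomy $k=\F_p$ versus $k=\Q$ (Condition~\ref{cond:flag}(4) and reduction mod~$p$) makes the argument genuinely case-dependent.
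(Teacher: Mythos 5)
The crux of this proposition is the passage from the planewise/linewise flag structure of $f$ to an actual valuation of $K$, and that is exactly the step your proposal does not supply. You reduce the statement to the classification ``a subgroup of $K^\times/k^\times$ meeting every line in a Condition~\ref{cond:flag}-type configuration or in the empty set is either $F^\times/k^\times$ or $\nu^{-1}(\Delta)$ for a valuation $\nu$ of $K$ and a subgroup $\Delta\subseteq\Gamma_\nu$,'' and then you yourself flag this classification as ``the main obstacle,'' to be handled by applying Proposition~\ref{prop:plane} plane by plane and ``gluing'' via the group law. But that classification is not proved anywhere in this paper (the remark after Condition~\ref{cond:flag} is weaker -- it assumes nonempty intersections -- and even it is only asserted, with the present proposition cited from \cite{CommElts} as the underlying input), and it is essentially equivalent in difficulty to the proposition itself. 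Moreover, Proposition~\ref{prop:plane} only yields, for each plane, a trivial coloring or a reduction coming from a valuation of the prime field $k$; it says nothing about how these local data cohere into a single valuation ring of $K$. The real work in \cite{CommElts} is precisely this globalization: one must build the valuation (e.g.\ exhibit the candidate unit group/valuation ring from the sets where $f$ takes its generic values on lines through $1$, verify additive and multiplicative closure using plane geometry, and produce the ordered value group), and ``use the group law on $K^\times/k^\times$ to glue'' is a placeholder for that argument, not a proof. Note also that the paper itself gives no proof of this proposition -- it is quoted from \cite{CommElts} -- so nothing in Section~\ref{sect:proof} can be invoked to discharge your classification step without circularity (the degenerate-case analysis there uses this proposition as an input).

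The parts you do carry out are sound but peripheral: the observation that $\ker f$ may miss a line entirely (your $\nu_\infty$, $\ml(t,t^2)$ example is correct), so that Condition~\ref{cond:flag} cannot be applied verbatim to $\ker f$ and the target must be $\nu^{-1}(\Delta)$ rather than $\mo_{K,\nu}^\times/\mo_{k,\nu}^\times$, is a good catch; the linewise description of $\ker f\cap\ml$ over $\F_p$ and $\Q$ is essentially right (over $\Q$ you should also allow the preimage of a single point of $\P^1(\Z/p)$, i.e.\ a ball, which is again projectively equivalent to $\Z_{(p)}$); the exclusion of the subfield alternative for $f\not\equiv 1$ and the formal factorization $\ker\nu=\nu^{-1}(0)\subseteq\nu^{-1}(\Delta)=\ker f\Rightarrow f=r\circ\nu$ are fine (modulo the point that for $k=\Q$ the quotient $\Gamma_\nu/\nu(k^\times)$ need not literally be a value group unless $\nu(k^\times)$ is convex, which your parenthetical glosses over). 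As it stands, however, the proposal assumes the theorem's core content rather than proving it.
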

A flag map $f$ on $\P^n(k)$ 
defines a map 
\begin{equation}
\label{eqn:rst}
\begin{array}{rcl}
\hat f:  \hat{\P}^n & \ra &  A \\
  \lambda & \mapsto & f_{\rm gen}(\lambda) 
\end{array}
\end{equation}
on the dual space, by assigning to a 
projective hyperplane the {\em generic} value of $f$ on this hyperplane. 
The following lemma generalizes results in \cite[Section 2]{CommElts}. 

\begin{lemm}
\label{lemm:restrict}
A map $f:\P\ra A$ is a flag map if and only if for every $\Pi=\P^2(k)\subset \P$
the restriction $f_{\Pi}$ is a flag map.
\end{lemm}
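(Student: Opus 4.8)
The forward implication is immediate: by definition a flag map restricts to a flag map on every finite-dimensional projective subspace, in particular on every plane $\Pi=\P^2(k)\subset\P$. For the converse the plan is to prove, by induction on $n=\dim\Pi$, that $f|_\Pi$ is a flag map for every finite-dimensional subspace $\Pi\subseteq\P$. Every line and every point of $\P$ lies in some plane, and the restriction of a flag map on a plane to a line, or to a point, is again a flag map: for $k=\F_p$ this is a direct check, and for $k=\Q$ it follows since $\rho_\Lambda$ in \eqref{eqn:rho} restricts on any linear subspace to a reduction map of the same kind (the relevant sublattice is saturated, hence a direct summand). Thus the cases $n\le 2$ hold by the hypothesis.

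So fix $n\ge 3$ and a subspace $\Pi=\P^n\subset\P$. Every plane contained in a proper subspace of $\P^n$ is a plane of $\P$, so the induction hypothesis gives that $f|_{\P^m}$ is a flag map for all $m<n$; in particular $f|_H$ is a flag map for every hyperplane $H\subset\P^n$. Pick one such hyperplane $H_0$, with flag $H_0\supset N\supset\cdots$ and generic value $a_0\in A$, i.e.\ $f\equiv a_0$ on $H_0\setminus N$. The main work is to produce a hyperplane $H_\star\subset\P^n$ and a value $a_\star$ with $f\equiv a_\star$ on $\P^n\setminus H_\star$ (the "flag type" outcome), the only alternative being that $k=\Q$ and $f=\bar f\circ\rho_\Lambda$ for a reduction $\rho_\Lambda\colon\P^n(\Q)\to\P^n(\F_p)$ and a flag map $\bar f$ on $\P^n(\F_p)$. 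To this end I would test $f$ on the planes $\Pi'=\Pi(\mathfrak q_1,\mathfrak q_2,\mathfrak p)$ spanned by two points $\mathfrak q_1,\mathfrak q_2\in H_0\setminus N$ and a point $\mathfrak p\notin H_0$: the line $\Pi'\cap H_0=\ml(\mathfrak q_1,\mathfrak q_2)$ carries the constant value $a_0$ away from its single intersection with $N$, so the flag-map property of $f|_{\Pi'}$ pins down $f(\mathfrak p)$; running this over generic choices of $\mathfrak q_1,\mathfrak q_2$ shows $f\equiv a_0$ off a proper subset of $\P^n\setminus H_0$, and a second application of the same planar analysis forces that exceptional set, together with the non-generic locus of $f$ inside $H_0$, to lie on a single hyperplane $H_\star$ (or exhibits the reduction $\rho_\Lambda$ when $k=\Q$).

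Once this is in hand, $f|_{H_\star}$ is a flag map by the induction hypothesis; prepending $\P^n$ to its flag exhibits $f|_{\P^n}$ as a flag map, after a short check that the flag-type/$\rho$-type status of $f|_{H_\star}$ propagates consistently to $\P^n$; and in the remaining case $f|_{\P^n}=\bar f\circ\rho_\Lambda$ reduces the statement to the characteristic-$p$ case on $\P^n(\F_p)$, already handled. This closes the induction. The step I expect to be the main obstacle is exactly the middle one: extracting a genuine hyperplane $H_\star$ off which $f$ is constant --- or detecting the reduction map when $k=\Q$ --- from the purely planar hypothesis. This is where the configuration analysis of \cite[Section~2]{CommElts} enters and must be extended from $\F_p$ to the prime field $\Q$; the dual "generic value" map $\hat f$ of \eqref{eqn:rst} is the natural bookkeeping device for organizing it.
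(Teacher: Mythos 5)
Your reduction to finite-dimensional subspaces and the induction set-up are fine, and the forward direction is indeed immediate. But the proposal stops short of proving the lemma: the entire content is the middle step --- producing a hyperplane $H_\star$ off which $f$ is constant, or detecting the reduction $\rho_\Lambda$ when $k=\Q$ --- and this is precisely the step you defer (``the main obstacle \dots where the configuration analysis of \cite[Section~2]{CommElts} enters''). Worse, the mechanism you sketch for it does not work as stated. You claim that for a plane $\Pi'=\Pi(\mq_1,\mq_2,\mathfrak p)$ with $\mq_1,\mq_2\in H_0\setminus N$ and $\mathfrak p\notin H_0$, the flag-map property of $f|_{\Pi'}$ ``pins down'' $f(\mathfrak p)$ and, over generic choices, forces $f\equiv a_0$ off a small subset of $\P^n\setminus H_0$. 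It does not: if the flag line of $\Pi'$ is the line $\ml(\mq_1,\mq_2)=\Pi'\cap H_0$ itself, the generic value of $f|_{\Pi'}$, hence $f(\mathfrak p)$, is unconstrained by $a_0$. Concretely, take $f$ a genuine flag map on $\P^n$ and choose $H_0$ to be its flag hyperplane: then $f$ takes the value $a_0$ \emph{only} on $H_0$ (away from lower strata) and a different constant value on all of $\P^n\setminus H_0$, so the assertion ``$f\equiv a_0$ off a proper subset of $\P^n\setminus H_0$'' is false. Anchoring the argument to the generic value of one arbitrarily chosen hyperplane cannot succeed; one must compare generic values across \emph{all} hyperplanes.

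That comparison is what the paper actually does, and it is the part you would need to supply. The paper works with the dual map $\hat f$ assigning to each hyperplane its generic value: first it shows $\hat f$ takes at most two values (if three hyperplanes had pairwise distinct generic values $a_1,a_2,a_3$, projecting from a suitable point of the third affine piece produces a line meeting all three affine pieces, i.e.\ a line on which $f$ takes three values inside a plane where $f$ is assumed flag --- contradiction); then, for the two competing values, it compares the dual families $\hat\P^{n-1}_{\mq}\subset\hat\P^n$ of hyperplanes through chosen points and uses that two such dual hyperplanes intersect, forcing the non-generic locus of each value into a single hyperplane, which yields the flag. In your write-up $\hat f$ appears only as ``bookkeeping,'' and the $\Q$/reduction alternative is asserted rather than argued, so as it stands the proposal is a plausible plan with a flawed key step rather than a proof.
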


\begin{proof}
Assume the claim for every $\P^{n-1}\subset \P^n$, for $n\ge 3$.

\

{\em Step 1.}
Consider $\mq_1,\ldots, \mq_r\in \P^n(k)$ so that 
$f(\mq_i)$ are not generic in $\P^n$ and let 
$\Pi=\Pi(\mq_1,\ldots, \mq_r)\subset \P^n(k)$ 
be their projective envelope.
If $\dim(\Pi) \geq n-1$ then, for some $i$, 
$$
f(\mq_i)=f_{\rm gen}(\Pi).
$$ 
Indeed, if $\mq_1,\ldots,\mq_r$ are nongeneric then
any subset generates a subspace of dimension
$\leq n-2$ and hence $\{ \mq_1, \ldots, \mq_r\}\subset \Pi$, $\dim(\Pi)\leq n-2$, 
by induction. Then $f$ is constant outside of $\Pi$ and hence, by induction, 
a flag map.
Thus we may assume $\dim(\Pi) < n-1$.

\

{\em Step 2.}
$\hat{f}$ takes at most two values. Indeed, assume it takes 
distinct values $a_i$ on hyperplanes $\P^{n-1}_i$, $i=1,2,3$, 
so that $f$ is constant on affine subspaces  $\A^{n-1}_i\subset \P^{n-1}_i$. 
There exists a $\mq_3\in \A_3^{n-1}$, not contained
in $\P^{n-1}_1\cup \P^{n-1}_2$, since 
$\A_1^{n-1}, \A_2^{n-1},\A^{n-1}_3$ are disjoint
and
$$
(\P^{n-1}_1\cup \P^{n-1}_2)\setminus (\A_1^{n-1}\cup \A_2^{n-1})= 
\P^{n-2} = \P^{n-1}_1\cap \P^{n-1}_2,
$$
which does not contain an affine space $\A^{n-1}$.
Consider the projection
$$
\pi_3:  \P^n \setminus \{ \mq_3 \}\to \P^{n-1}
$$
from $\mq_3$. Then there exists a point 
$$
\mq\subset \pi_3(\A_1^{n-1})\cap \pi_3(\A_2^{n-1})\subset \P^{n-1}.
$$
The restriction of $f$ to $\ml(\mq_3,\mq)$ takes
three values, and $f$ is not a flag map on $\P^2$ containing this line, 
contradiction. 

\

{\em Step 3.}
Consider $\P^{n-1}_1$ with generic value $a_1$ and let $\mq\notin \P^{n-1}_1$ be a
point $f(\mq)\neq a_2$. 
Let $\P'\subset \P^{n-1}_1$
be a maximal projective subspace with a generic value different from $a_1$.
On any $\P^{n-1}_{\mq}\subset \P^n$ containing $\mq$ and
such that $\P^{n-1}_{\mq}\cap \P^{n-1}_1\neq \P'$ the generic value $\hat f_{\mq}=a_1$.
Indeed, it is generated by $\mq$ and points of  $\P^{n-1}_1 \setminus \P'$.
Thus $\hat{f}_{\mq}\neq a_2$, and  since $f$ takes only two values, 
$\hat f_{\mq}=a_1$.
In particular, on $\hat{\P}^{n-1}_{\mq}\subset \hat{\P}^{n}$, 
which corresponds to all
hyperplanes containing $\mq$, there is at most one point
with $\hat f\neq 1$. This can only occur if $\dim(\P')= n-1$.

The same argument holds for $\P^{n-1}_2$ and the corresponding subspace
$\hat{\P}^{n-1}_u\subset \hat{\P}^{n}$.
Since $\hat{\P}^{n-1}_{\mq}$ and $\hat{\P}^{n-1}_u$ intersect in $\P^{n-2}$ we obtain
a contradiction, 
unless all the points with value $a_i$
are contained in a hyperplane (for $i=1$ or $2$), 
if $n > 2$. Thus $f$ is a flag map.
\end{proof}

The following lemma is in \cite[Lemma 5.2]{AbSubgrps} 
(corrected in \cite[Prop.~3.4.1]{CommElts}).

\begin{lemm}
\label{lemm:qq}
Let $k=\Q$ or $k=\F_p$, with $p > 2$, and let 
$$
f:\P^2(k)\ra A
$$ 
be such that for every line $\ml\subset \P^2$ the 
restriction $f_{\ml}$ is a
flag map. Then $f$ is a flag map. 
\end{lemm}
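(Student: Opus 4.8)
The plan is to reduce the statement to Proposition~\ref{prop:plane} by manufacturing a $3$-coloring of $\P^2(k)$ out of the hypothesis that $f$ restricts to a flag map on every line. First I would observe that, by the one-dimensional structure of flag maps on lines, on every line $\ml$ the function $f_{\ml}$ takes at most two values, and in fact has the shape ``generic value $a_{\ml}$ on an affine part, plus possibly one exceptional point'' when $k=\F_p$, and either that shape or a $p$-adic reduction pattern when $k=\Q$. The key combinatorial fact I want to extract is that the generic value $f_{\rm gen}(\ml)$, as $\ml$ ranges over all lines, takes at most \emph{two} distinct values on $\P^2$; this is proved by the same incidence argument as Step~2 of Lemma~\ref{lemm:restrict} (three lines carrying three distinct generic values would, after projecting from a suitably chosen point off the first two, force a line on which $f$ takes three values, contradicting that $f_{\ml}$ is a flag map). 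Denote these generic values $a$ and $b$ (the case of a single generic value is easy and handled separately, giving trivial type~I directly).

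Next I would define a candidate coloring $\mc:\P^2(k)\to\{\bullet,\circ,\star\}$: color a point $\mq$ by $\bullet$ if $f(\mq)=a$, by $\circ$ if $f(\mq)=b$, and by $\star$ otherwise (the ``exceptional'' points, where $f$ takes a value attained only non-generically on lines through them). The content is to check the defining axiom of a $3$-coloring, namely that every line meets exactly two colors. Since $f_{\ml}$ is a flag map with generic value in $\{a,b\}$, the line $\ml$ contains points of that generic color together with at most one point of some other color; so $\mc(\ml)$ has at most two elements, and exactly the right bookkeeping shows it cannot be a single color unless $f_{\ml}$ is constant, which a short argument (using that $f$ has two generic values globally and lines sweep out the plane) rules out or absorbs into the trivial case. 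In the $\Q$-case with a $p$-adic reduction pattern on some line, I would instead directly recognize the reduction map $\rho_\Lambda:\P^2(\Q)\to\P^2(\F_p)$ and push the problem down to $\F_p$, where the same analysis applies and the flag structure pulls back.

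Once $\mc$ is a genuine $3$-coloring (or the degenerate colorings are seen to already give a flag), Proposition~\ref{prop:plane} supplies a nonarchimedean valuation $\nu$ and a surjection $\rho=\rho_\Lambda:\P^2(k)\to\P^2(\kk_\nu)$ such that $\mc$ is induced from a \emph{trivial} coloring $\mc_\nu$ of $\P^2(\kk_\nu)$, of type~I or~II. Reading the type~I/II structure of $\mc_\nu$ back through $\rho$ and combining it with the known values of $f$ on each color class and the line-wise flag shape, I would then assemble the required flag $\P^2\supset\P^1\supset\P^0$ (in the type~II case centered at the point $\mq$, in the type~I case supported on the distinguished line), verifying that $f$ is constant on each successive difference. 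I expect the main obstacle to be the careful verification that $\mc$ satisfies the ``exactly two colors per line'' axiom in all sub-cases, in particular handling the $\F_p$ with $p>2$ restriction (needed so that a line has enough points for the affine/exceptional dichotomy to be forced) and cleanly separating the genuinely-valued case from the $p$-adic-reduction case over $\Q$; once the coloring is in hand, the passage through Proposition~\ref{prop:plane} and the unwinding of the trivial type is essentially bookkeeping.
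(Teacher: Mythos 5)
Your reduction to Proposition~\ref{prop:plane} has a genuine gap exactly in the case that carries the real content of the lemma. Your candidate coloring $\mc$ assigns the two generic values $a,b$ and a third color $\star$ to ``everything else''; but when $f$ takes only the two values $a$ and $b$ on all of $\P^2(k)$, with each value attained on a triple of points spanning the plane and neither value concentrated on a line, the class $\star$ is empty, $\mc$ is not a surjection onto three colors, and Proposition~\ref{prop:plane} simply does not apply. This configuration is not ``easy'' and cannot be ``absorbed into the trivial case'': it is precisely the situation that can occur without $f$ being a flag map, and it is the only place where the hypothesis $p>2$ is actually needed. The paper disposes of it by a separate projection--counting argument: projecting from a point $x_1$ with $f(x_1)=a_1$ onto $\ml(x_2,x_3)$, the generic value on that line is $a_1$, so at most one point $\mq_2$ of it has value $a_2$, and lines $\ml(x_1,x)$ with $x\in\ml\setminus\mq_2$ have generic value $a_1$; hence over $\F_p$ at most $2p$ points carry the value $a_2$, and symmetrically for $a_1$, giving $p^2+p+1\le 4p$, i.e.\ $p=2$, a contradiction for $p>2$ (with an analogous argument over $\Q$). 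Your attribution of the $p>2$ hypothesis to ``a line having enough points for the affine/exceptional dichotomy'' is therefore misplaced; over $\F_2$ the lemma genuinely fails because any two-valued map on $\P^1(\F_2)$ is a flag map, which is exactly the two-value configuration you have not ruled out.

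By contrast, the first half of your plan does match the paper: Step~2 of Lemma~\ref{lemm:restrict} gives that the generic-value map $\hat f$ takes at most two values, and in the sub-case where a third value of $f$ occurs at exactly one point the paper also invokes Proposition~\ref{prop:plane}. So your outline is fine up to that point, but to complete the proof you must add the counting argument (or an equivalent elementary argument) for the bi-valued, non-degenerate case, rather than routing it through a $3$-coloring that does not exist there; you should also check, in the case you do send to Proposition~\ref{prop:plane}, that no line is monochromatic, since the definition of a $3$-coloring requires exactly two colors on every line.
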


\begin{proof}
The proof of Lemma~\ref{lemm:restrict} works up
to Step 2. Thus $\hat f$ takes two values $a_1,a_2$
and either
\begin{enumerate}
\item there is exactly one point $\mq$ with $f(\mq)=a_3$ or
\item $\hat{f}$ takes two values.
\end{enumerate}
In the first case, we apply Proposition~\ref{prop:plane}.
In the second case, either
\begin{itemize}
\item one of the values is concentrated on a line or
\item there exist $x_1,x_2,x_3$ and $y_1,y_2,y_3$
spanning $\P^2(k)$ with $f(x_i)=a_1$ and $f(y_i)=a_2$.
\end{itemize}
Consider the projection
$$
h_{x_1} : \P^2\setminus x_1\to \ml:=\ml(x_2,x_3).
$$ 
The generic value on $\ml$ is $a_1$ 
and hence there is at most
one $\mq_2\in\ml$ with $f(\mq_2)=a_2$.
The generic value of $f$ on any $\ml(x_1,x), x\in \ml\setminus \mq_2$
is $a_1$. Over  $k=\F_p$, the number
of points with $f=a_2$ is $\leq 2p$.
The same argument applies for $y_1$, and it follows 
$$
\P^2(\F_p)=p^2+p+1 \leq 4 p, \quad \text{i.e., } p=2.
$$ 
This approach works also over $k=\mathbb Q$.
\end{proof}

\begin{rema} 
Lemma~\ref{lemm:qq}  fails
over $\mathbb F_2$, since any map with two values on
$\P^1(\mathbb F_2)$ is a flag map. 
\end{rema}

\begin{lemm} 
\label{flag_homomorphisms} 
Let 
$$
f:\P(K)=K^\times/k^\times\ra A
$$ 
be a group homomorphism whose restriction to every line is a flag map,
and such that there exists a 
plane $\Pi=\Pi(1,x,y)$, with $ f(x),f(y)\neq 1$, 
and $f_{\Pi}$ not a flag map.
Then 
$$
f(x)=f(y)\quad \text{ and } \quad f(x)^2=1.
$$
In particular, if $f$ is not a flag map,  
then $k=\mathbb F_2$ and $f^2$ is a flag map. 
\end{lemm}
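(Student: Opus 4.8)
The plan is to exploit the interplay between the group law on $\P(K)=K^\times/k^\times$ and the combinatorics forced by Lemma~\ref{lemm:restrict} and Lemma~\ref{lemm:qq}. First I would isolate the offending plane $\Pi=\Pi(1,x,y)$: since $f$ restricted to every line is a flag map but $f_\Pi$ is not, Lemma~\ref{lemm:qq} (applicable when $k=\Q$ or $k=\F_p$ with $p>2$) is contradicted unless we are over $\F_2$. So the bulk of the work is to show that the existence of such a non-flag plane forces the \emph{values} $f(x)=f(y)$ and $f(x)^2=1$; once that is done, the final sentence follows because Lemma~\ref{lemm:qq} then rules out every $k$ except $\F_2$, and over $\F_2$ the relation $f(x)^2=1$ on all nongeneric elements makes $f^2$ constant on each $\P^i\setminus\P^{i-1}$, hence a flag map.

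Next I would run through the proof of Lemma~\ref{lemm:restrict} as far as it goes in the planar case: the argument through Step~2 shows $\hat f_\Pi$ takes at most two values, say $a_1,a_2$, so $f_\Pi$ takes a third value $a_3$ only on a ``small'' locus. The key structural point is that $\Pi$ contains the point $1$, which is the identity of the group $\P(K)$, and the three marked points $1,x,y$ on which $f$ is non-generic and (for $x,y$) non-trivial. I would use the group structure: lines through $1$ correspond to one-dimensional subgroups (sub-$\P^1$'s closed under the group law up to the $k^\times$-scaling), and $f$ restricted to such a line is a flag map that is also compatible with that partial group law. Translating by elements of $\P(K)$ (which acts on $\P(K)$ but \emph{not} linearly on a fixed plane), I would move the configuration so that the three non-generic points include $1$ and track how the two-value structure of $\hat f_\Pi$ interacts with additivity: the constraint $f(ab)=f(a)f(b)$ whenever $a,b,ab$ lie on a common line through $1$ severely limits which colorings of $\Pi$ are realizable, and a short case analysis on the position of $x,y$ relative to the line carrying the generic value should collapse to $f(x)=f(y)=:t$ with $t^2=1$.

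The main obstacle I expect is the bookkeeping in that case analysis: unlike in Lemma~\ref{lemm:restrict}, here $f$ is a genuine \emph{homomorphism}, so I cannot freely choose the three non-generic points — their mutual positions are constrained by the group law, and I must carefully use that $1\in\Pi$ together with the fact that every line through $1$ in $\Pi$ meets the ``non-flag witness'' in a controlled way. Concretely, the delicate step is ruling out the possibility that $f_\Pi$ takes three values in a genuinely two-dimensional (not line-concentrated) pattern while still being a homomorphism: this is where I would invoke that on each line through $1$ the map is a flag homomorphism, forcing $f$ to be ``valuation-like'' along that line by Proposition~\ref{prop:main}, and then show that two such valuation structures along two lines through $1$ in $\Pi$ are incompatible with a non-flag planar coloring unless the residue-characteristic obstruction $p=2$ kicks in and the values degenerate to an element of order $2$. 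Assembling these local-to-global constraints to pin down exactly $f(x)=f(y)$, $f(x)^2=1$, rather than some weaker relation, is the crux; everything after that is a direct appeal to Lemma~\ref{lemm:qq} and a one-line verification that $f^2$ is a flag map over $\F_2$.
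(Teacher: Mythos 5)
Your reduction to $k=\F_2$ via Lemma~\ref{lemm:qq} and your reading of the two-valued structure of $f_{\Pi}$ agree with the paper, but the heart of the lemma --- the identity $f(x)^2=1$ --- is not reachable by the route you sketch, and this is a genuine gap. Inside the plane $\Pi$ there is nothing to contradict: over $\F_2$ the non-flag coloring of the Fano plane (one value on a line together with one extra point, the other value $a$ on the complementary triangle $\{x_1,x_2,x_3\}$) is consistent with every line of $\Pi$ taking at most two values, hence being flag, \emph{for any} value $a$. So no case analysis on the position of $x,y$ inside $\Pi$, and no ``incompatibility of valuation structures along two lines through $1$'', can force $a^2=1$. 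The paper must leave the plane: it uses the homomorphism property to get $f(x_i^2)=a^2$ and $f(y_i^2)=1$ for $y_i=x_i+1$, works in the $\P^3$ spanned by $\Pi$ and $x_i^2$, determines the values at the four remaining points $x_i^2+x_r$, $x_i^2+x_r+1$ by intersecting suitable pairs of lines, and finally exhibits the line $\ml(x_1^2,x_2^2+x_1)$ on which $f$ takes the three values $a^2$, $a$, $1$; if $a^2\neq 1$ this contradicts flagness of $f$ on lines. The passage to squares and to a larger projective subspace is precisely the idea missing from your proposal.

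Two of your supporting claims are also incorrect as stated. A projective line through $1$, i.e.\ $\{1,x,1+x\}$ over $\F_2$ (or the set of classes of $a+bx$ in general), is not a subgroup of $K^\times/k^\times$, so there is no constraint $f(ab)=f(a)f(b)$ available for collinear triples on such a line; and Proposition~\ref{prop:main} applies to homomorphisms on all of $\P(K)$ that are globally flag, not to the restriction of $f$ to a single line, so it cannot make $f$ ``valuation-like along a line''. Finally, for the last sentence of the lemma you still need Lemma~\ref{lemm:restrict}: once one knows $f^2\equiv 1$ on every plane where $f$ fails to be flag, one concludes that $f^2$ is flag on every plane and hence flag globally; asserting that $f^2$ is constant on the strata of some flag presupposes the conclusion rather than proving it.
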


\begin{proof} 
By Lemma~\ref{lemm:qq}, $k=\mathbb F_2$, 
so that $K^{\times}/k^{\times}=K^{\times}$. Then
$f_{\Pi}$ takes two values and  
is constant, with distinct values, on two 
triples of noncollinear points. 
Thus
\begin{itemize}
\item[(i)] 
$f$ is constant on  
$\mathfrak l\cup\{\mq\}$, where $\mathfrak l\subset \Pi$ is a line 
(containing the remaining seventh point) and 
$\mq\in \Pi\setminus\mathfrak l$, and 
\item[(ii)] 
$f$ is constant on $\{x_1,x_2,x_3\}:=\Pi\setminus(\mathfrak l\cup\{\mq\})$; 
put $a:=f(x_i)$.
\end{itemize}
After a shift, we may assume that $\mq=1$, so 
$f(\mathfrak l)=1$. Suppose that $a^2\neq 1$. Let $\P^3_i$  be the 
projective envelope of $\Pi\cup\{x_i^2\}$. We claim that $x_i^2\in \P^3_i$ 
is the only point with $f(x_i^2)=a^2$, in particular, 
$f$ is constant, on the complement to $x_i^2$, 
on lines in $\P^3_i$ passing through $x_i^2$. 

Note that $f$ takes three values on
$\P_i^2:=\Pi(1,x_i,x_i^2)$, and thus is a flag map on $\P_i^2$. 
Let $y_i=x_i+1$ be the only point of 
$\ml(1,x_i)\setminus\{1,x_i\}$. Then $f(y_i)=1$, 
since $y_i\in\mathfrak l$. Thus, $f(1)=f(y_i)=f(y_i^2)=1$ 
and $f(x_i)=f(x_iy_i)=a$ and $f(x_i^2)=a^2$, so $1$ is the $f$-generic 
value on $\P_i^2$, and therefore, the remaining point 
$$
\mq_i=x_i^2+x_i+1\in 
\P_i^2\setminus \left(\ml(1,x_i)\cup\ml(1,x_i^2)\cup \ml(x_i,x_i^2)\right)
$$
is also $f$-generic, i.e., $f(\mq_i)=1$. 

We have $4=2^3-2^2$ points in $\P^3_i\setminus(\Pi\cup \P_i^2)$, where 
$f$ is not yet determined. They are contained, for $r\neq i$, 
in $\Pi(1,x_i,x_i^2+x_r)\setminus\ml(1,x_i)$, 
which consists of 
$$
x_i^2+x_r, \quad x_i^2+x_j, \quad x_i^2+x_j+1, \quad x_i^2+x_r+1.
$$ 
Each of these is contained in a line intersecting 
$\Pi$ and $\P_i^2$ at points with different values $a$ 
and $1$. Indeed, for $\{i,j,r\}=\{1,2,3\}$, one has 
\begin{itemize} 
\item $x_i^2+x_r\in\ml(q_i,x_j)\cap\ml(x_i^2,x_r)$, 
so  
$$
f(x_i^2+x_r)\in\{1,a\}\cap\{a^2,a\}=\{a\};
$$  
\item $x_i^2+y_r\in\ml(x_i^2+1,x_r)\cap\ml(x_i^2,y_r)$, 
and  
$$
f(x_i^2+y_r)\in\{1,a\}\cap\{a^2,1\}=\{1\}.
$$ 
\end{itemize} 
Note that $f$ takes three 
values on $\ml(x_1^2,x_2^2+x_1)$: 
$$
f(x_1^2)=a^2, \quad f(x_2^2+x_1)=a, \quad  f(x_3^2+x_1+1)=1.
$$
If $a^2\neq 1$ we get a contradiction to our
assumption that $f$ takes only two values and is flag on any 
line in $\Pi(x_1,x_2,x_3)$.
Thus on every $\Pi(1,x,y)$ where $f$ is not a flag map, 
$f^2\equiv 1$,  
hence is a flag map. 
\end{proof}

\begin{rema}
Under conditions of Lemma~\ref{lemm:qq}, 
$f^2$ is always a flag map. 
In particular,  
if $A$ has no $2$-torsion, then Proposition~\ref{prop:main}
holds as well.
\end{rema}

\begin{lemm}
\label{2-maps}
Assume the conditions of Lemma~\ref{lemm:qq} and 
that the two-torsion part $A[2]\subseteq A$ is nontrivial. Consider
the composition
$$
f_2: \P(K) \stackrel{f}{\lra} A \stackrel{r_2}{\longrightarrow} A/A[2],
$$ 
with $r_2$ the projection.
Then $f_2$ is a flag map on every plane $\Pi\subset \P(K)$.
\end{lemm}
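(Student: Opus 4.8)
The plan is to reduce the statement to the Remark immediately preceding it — that $f^2$ is always a flag map — by showing that the map $f_2$ and the map $f^2\colon x\mapsto f(x)^2$ carry exactly the same combinatorial data. Concretely, the two maps induce the same partition of $\P(K)$ into fibers, since for $x_1,x_2\in\P(K)$
$$
f_2(x_1)=f_2(x_2)\iff f(x_1)f(x_2)^{-1}\in A[2]\iff f(x_1)^2=f(x_2)^2\iff f^2(x_1)=f^2(x_2);
$$
in particular $f_2$ and $f^2$ induce the same partition on any plane $\Pi\subset\P(K)$.

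Next I would argue that whether a map $\Pi=\P^2(k)\to A$ is a flag map depends on its partition into fibers alone. Over $\F_p$ this is immediate: the condition is the existence of a flag \eqref{eqn:flag} on whose successive strata the map is constant, which only sees the fibers. Over $\Q$ the same holds for the first alternative in the definition, and for the second alternative ($g=\bar g\circ\rho$) one notes that the fibers of $\rho$ refine those of $g$, so any map with the same fibers as $g$ again factors through $\rho$, inducing downstairs a map with the same fibers as the flag map $\bar g$, hence itself a flag map. Combining with the previous paragraph, $f_2|_\Pi$ is a flag map exactly when $f^2|_\Pi$ is.

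It then remains to recall that $f^2$ is a flag map on all of $\P(K)$ — equivalently, on every plane, by Lemma~\ref{lemm:restrict}. If $f$ is itself a flag map this is clear, as $f^2$ is the composition of $f$ with the squaring endomorphism of $A$ and composing a flag map with an arbitrary map of target sets yields a flag map. If $f$ is not a flag map, then this — together with $k=\F_2$ — is precisely the conclusion of Lemma~\ref{flag_homomorphisms}, as recorded in the Remark above. Either way $f^2$, and hence $f_2$, restricts to a flag map on every plane $\Pi\subset\P(K)$.

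I do not anticipate a real obstacle here; the only point demanding a little care is the $\Q$-case of the second paragraph — verifying that a map sharing the fibers of $\bar g\circ\rho$ genuinely factors through the same $\rho$ with a flag-map quotient. If one prefers to sidestep the fiber-partition reformulation, the alternative is a direct plane-by-plane argument: on a plane $\Pi$ where $f_\Pi$ is not a flag map, the proof of Lemma~\ref{flag_homomorphisms} shows $k=\F_2$ and that $f$ takes just two values on $\Pi$, differing by an element of $A[2]$, so that $f_2|_\Pi$ is constant — but this leans on the internal details of the proof of Lemma~\ref{flag_homomorphisms} rather than citing it as a black box.
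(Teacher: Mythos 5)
Your argument is correct and is essentially the paper's: the paper simply notes that on a plane where $f$ is a flag map so is $f_2$, while on a plane where it is not, Lemma~\ref{flag_homomorphisms} forces the nontrivial value of $f$ to be $2$-torsion, so $f_2$ is constant there --- which is exactly the ``direct plane-by-plane'' alternative you sketch at the end. Your main route (observing that $f_2$ and $f^2$ have identical fibers, that flag-ness of a map on $\P^2(k)$ depends only on its fiber partition --- including the $\Q$-case factoring through $\rho$ --- and then invoking the Remark that $f^2$ is a flag map) is a sound but equivalent repackaging, since that Remark rests on the same Lemma~\ref{flag_homomorphisms}.
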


\begin{proof}
If $f$ is a flag map on $\Pi(1,x,y)$ then $f_2$ is also flag.
If $f$ is not a flag map, then we apply Lemma~\ref{flag_homomorphisms}: 
$f$ takes only two values $0$ or $1$, and $f(x)^2=1$,
thus $f(x) =1$.

In particular, $f_2 \equiv 1$ on $\Pi(1,x,y)$ and hence is
a flag map. Thus $f_2$ is a flag map on every plane and hence a flag map.
\end{proof}

To summarize, if $A\neq A[2]$ then $f$ determines a valuation $\nu$.
If $A=A[2]$, then $f$ is trivial on some
subfield $K'\subset K$ such that $K|K'$
is a  purely inseparable extension of exponent
$2$.

%%%%%

\section{$\Z_{(p)}$-lattices}
\label{sect:zp}

Let $p$ be a prime number and $\Z_{(p)}\subset \Q$ the set of rational numbers
with denominator coprime to $p$.
A $\Z_{(p)}$-lattice, or simply, a lattice $B\subset \Q^{n+1}$ is 
a $\Z_{(p)}$-submodule such that 
$B\otimes_{\Z_{(p)}} \Q=\Q^{n+1}$. 
Given a lattice $B\subset \Q^{n+1}$ and an element 
$x\in \Q^{n+1}\setminus 0 $ there
exists an element $x_B\in B\setminus pB$ such that 
$x$ and $x_B$ define the same point in $\P^n(\Q)$,  
this element is unique in $B\setminus pB$, 
modulo scalar multiplication by $\Z_{(p)}^\times$. 
Lattices $B,B'\subset \Q^{n+1}$ 
are called equivalent if $B= a \cdot B'$, for some $a\in \Q^\times$.

In this section, we consider maps
$$
f: (\Q^{n+1}\setminus 0) \ra A
$$
which are invariant under scalar multiplication  by $\Q^\times$; we use the same 
notation for the induced map
$$
f:\P^n(\Q)\ra A.
$$
We say that $f$
is {\em induced from $\P^n(\Z/p)$ via a lattice $B$} 
if there exists a map 
$$
\bar{f}:\P^n(\Z/p)\ra A
$$ 
such that
$$
f(x)=(\bar{f} \circ \rho_B)(x_B), \quad \text{ for all } \quad x\in \P^n(\Q),
$$  
where 
$$
\rho_B : (B\setminus  pB) \to (B/pB) \setminus 0\to \P^n(\Z/p).
$$
This is well-defined since $\rho_B$ is invariant under $\Z_{(p)}^\times$. 
Such lattices will be called
$f$-compatible, or simply compatible. 
If $f$ is induced from $\P^n(\Z/p)$ via a lattice $B$ 
then it is also induced via any equivalent lattice. 
%%%%%%%%%%%%%%%

%%%%%%%%
\begin{lemm}
\label{lemm:drr}
Assume that $f$ is induced from $\P^n(\Z/p)$
via a lattice $B$. Let $\bar{x}\in \P^n(\Z/p)$ and choose an 
$x_B\in B\setminus pB$ 
such that $\rho_B(x_B)=\bar{x}$. Let
$$
N_{\bar x, B}:= \{ x\in \Q^{n+1}\setminus 0 \,|\,  
x = x_B\in \P^n(\Q)\}. 
$$
If $B'$ is another $f$-compatible lattice
such that 
$$
N_{\bar x', B'} = N_{\bar x, B}, \quad \text{ for some } \quad \bar{x}'\in \P^n(\Z/p),
$$
then $B'$ is equivalent to $B$.
\end{lemm}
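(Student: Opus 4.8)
The plan is to show that the hypothesis $N_{\bar x', B'} = N_{\bar x, B}$ forces $B$ and $B'$ to contain a common vector, and then to bootstrap this into an equality of lattices up to scaling. First I would unwind the definition: the set $N_{\bar x, B}$ is precisely the $\Q^\times$-orbit of $x_B$ inside $\Q^{n+1}\setminus 0$, i.e.\ the punctured line $\Q^\times \cdot x_B$, and similarly $N_{\bar x', B'} = \Q^\times \cdot x_{B'}'$. So the hypothesis says these two lines coincide, hence $x_{B'}'$ differs from $x_B$ by a nonzero rational scalar; after replacing $B'$ by an equivalent lattice (which does not change $f$-compatibility and does not change the equivalence class we want to prove) I may assume $x_{B'}' = x_B =: v$, with $v \in (B\setminus pB)\cap(B'\setminus pB')$. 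Thus $v$ is primitive in both lattices.

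Next I would leverage $f$-compatibility of both lattices to propagate this to \emph{all} points, not just the orbit of one. The idea is: the reduction maps $\rho_B$ and $\rho_{B'}$ both compute $f$ on $\P^n(\Q)$, via possibly different maps $\bar f, \bar f'$ on $\P^n(\Z/p)$; and they agree at $v$. For an arbitrary $w \in B\setminus pB$, consider the line $\ml$ in $\P^n(\Q)$ through the classes of $v$ and $w$. Its image under $\rho_B$ is (the $\Z/p$-points of) a line in $\P^n(\Z/p)$, and likewise under $\rho_{B'}$; one shows, using that the intersection $B\cap B'$ still spans and that $v$ is common and primitive, that along such a line the two reductions are forced to agree, so that the lattice $B' \cap (\Q v + \Q w)$ equals $B \cap (\Q v + \Q w)$ up to scalar — reducing the rank-$(n{+}1)$ statement to the rank-$2$ case by sweeping out all lines through $v$. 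Here the key subpoint is that, over $\Q$, two lattices in $\Q^2$ that share a common primitive vector and induce the same $p$-adic reduction on $\P^1(\Q)$ must coincide up to scalar: this is essentially the statement that $\P^1(\Z/p)$, $p>2$, has enough points to pin down the lattice, and it is where one uses $p \neq 2$ implicitly (compatible lattices form a single equivalence class).

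Then I would assemble the global conclusion. Having shown $B\cap M = B'\cap M$ up to scalar for every rational plane $M = \Q v + \Q w$ through $v$, and having normalized so that the scalar is $1$ on the common vector $v$, I get $B\cap M = B'\cap M$ on the nose for each such $M$; since every element of $\Q^{n+1}$ lies in some such plane, this gives $B = B'$ outright, hence $B$ and (the original) $B'$ are equivalent. Throughout, the well-definedness remarks already recorded in the text — that $\rho_B$ is $\Z_{(p)}^\times$-invariant, that $x_B$ is unique in $B\setminus pB$ up to $\Z_{(p)}^\times$, and that inducedness passes to equivalent lattices — are exactly what make the normalizations legitimate.

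\textbf{Main obstacle.} The delicate step is the rank-$2$ core: showing that a single shared primitive vector $v$ together with the coincidence of the induced maps on $\P^1(\Q)$ actually forces the two rank-$2$ lattices to agree up to scalar. A priori $\bar f$ and $\bar f'$ differ, and $f$-compatibility only constrains them through $f$; one must argue that the combinatorics of how $\rho_B$ and $\rho_{B'}$ distribute $\P^1(\Q)$ over $\P^1(\Z/p)$ — in particular which rational points reduce to the same residue point — is rigid enough that, once they agree at $v$, they agree everywhere, and that this rigidity is precisely recorded by the lattice. I expect this to need the explicit description of $N_{\bar x,B}$ as the set of $\Z_{(p)}^\times$-multiples-times-$p$-powers of $v$ inside $B$, i.e.\ a careful bookkeeping of $p$-valuations, rather than any deep input; but it is the only place where real work, as opposed to unwinding definitions, is required.
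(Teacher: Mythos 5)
There is a genuine gap, and it originates in your very first step: the identification of $N_{\bar x, B}$ with the punctured line $\Q^\times\cdot x_B$. That is not the set the lemma is about. As the paper's own proof makes explicit in its first line, $N_{\bar x, B}=\Q^\times\cdot(x_B+pB)$: it is the cone over the entire fiber of the reduction map $\P^n(\Q)\to\P^n(\Z/p)$ over $\bar x$, not a single projective point. Under your reading the hypothesis ``$N_{\bar x',B'}=N_{\bar x,B}$'' collapses to ``$B$ and $B'$ share a primitive vector up to scaling,'' which is essentially always arrangeable and cannot imply equivalence. Concretely, in the situation of Lemma~\ref{indunique -from line}(1) (where $\bar f$ is a flag map on $\P^1(\Z/p)$) the two inequivalent $f$-compatible lattices $B_0=\langle x,y\rangle$ and $B_{-1}=\langle x,p^{-1}y\rangle$ both contain $x$ as a primitive vector, so the statement you are actually proving is false. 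For the same reason your ``rank-$2$ core'' — that a shared primitive vector together with $f$-compatibility of both lattices forces them to coincide up to scalar — is not true: $f$-compatibility of $B$ and $B'$ does not mean the two reduction maps agree, only that $f$ factors through each of them, and your parenthetical claim that ``compatible lattices form a single equivalence class'' is contradicted by Lemma~\ref{indunique -from line} (two classes) and Lemma~\ref{lemm:eight} (three classes). The appeal to $p\neq 2$ is also spurious; nothing of that kind enters here.

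The information your reduction discards is exactly what the paper uses: the fiber $N=N_{\bar x,B}$ remembers $pB$. Fixing the base point $x_B\in N$, one checks that the vectors $z\in(\Q^{n+1}\setminus 0)\setminus N$ with $x_B+z\in N$ are precisely the nonzero elements of $pB$ (if $z\in B\setminus pB$ the sum reduces to a different point; if $z\notin B$ then after clearing the power of $p$ the reduction of $x_B+z$ is $\bar z\neq\bar x$), so the span of these $z$ is $pB$, and replacing $x_B$ by any other base point $y\in N$ changes this span only by a power of $p$ times a scalar. Hence the set $N$ alone determines $B$ up to equivalence, and $N_{\bar x',B'}=N_{\bar x,B}$ immediately gives $B'\sim B$ — no propagation along lines through a common vector, and indeed no use of $f$-compatibility of $B'$ beyond the hypothesis itself, is needed. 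Your plan cannot be repaired without first correcting the reading of $N_{\bar x,B}$; once that is corrected, the direct reconstruction of $pB$ from $N$ is both shorter and unavoidable, since it is the only place the fiber structure is used.
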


\begin{proof}
We have 
$$
N_{\bar x, B}=\Q^\times\cdot (x_B + pB), \quad x_B\in (B\setminus pB),  \quad 
\rho_B(x_B) = \bar{x}.
$$
Consider $z\in (\Q^{n+1}\setminus 0)\setminus N_{\bar x, B}$
such that $x_B +z$ projects to $\bar{x}\in \P^n(\Z/p)$.
Note that $z\not\in (B\setminus pB)$ 
since otherwise $\overline{x+ z}\neq \bar x$.  
Furthermore, $z\notin \Q^{n+1}\setminus B$, since
otherwise
$z\in p^{-m}(B \setminus p B)$, for some $m\in \N$, and 
$\overline{x_B+z} = \bar z\neq \bar x$.
Thus $z\in pB$, and the lattice $B_x$ spanned by such $z$ equals $pB$. 
Hence for any $y\in N_{\bar x,B}$,  
$B_y = p^m B$, for some $m\in \Z$. 
\end{proof}

In the discussion below, we use projective and affine geometry. The following lemma connects these concepts. 

\begin{lemm}
\label{lemm:aff-proj}
Let $L=\A^2(\Z_{(p)}) \subset \A^2(\Q)\subset \P^2(\Q)$ and 
$$
\rho_L: L\ra \bar{L}=\A^2(\Z/p)
$$ 
be the reduction map.
Then there exists a unique equivalence class of $\Z_{(p)}$-lattices $B\subset \Q^3$ such that $\rho$ 
extends to 
$$
\rho_B: B\setminus pB\ra \P^2(\Z/p)
$$ 
and such that the corresponding map 
$\P^2(\Q)\ra \P^2(\Z/p)$ coincides with $\rho_L$ upon restriction to $L$. 
\end{lemm}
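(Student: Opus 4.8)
\emph{Strategy.} The plan is to prove existence by exhibiting the standard lattice $\Z_{(p)}^{3}$ and uniqueness by a short reduction-map computation showing that every compatible lattice agrees with it up to scaling.

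\emph{Existence.} Take $B=\Z_{(p)}^{3}$ with its standard basis, and let $\rho_{0}\colon\P^{2}(\Q)\to\P^{2}(\Z/p)$ be the associated reduction map. For $(a,b)\in\A^{2}(\Z_{(p)})$ the vector $(a,b,1)$ lies in $\Z_{(p)}^{3}\setminus p\Z_{(p)}^{3}$, its last coordinate being a unit, hence is the primitive representative of $[a:b:1]\in\P^{2}(\Q)$; therefore $\rho_{0}([a:b:1])=[\bar a:\bar b:1]$, which under the standard inclusion $\A^{2}(\Z/p)\subset\P^{2}(\Z/p)$ is exactly $\rho_{L}(a,b)$. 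Thus $\Z_{(p)}^{3}$ is compatible.

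\emph{Uniqueness.} Let $B$ be any $\Z_{(p)}$-lattice equipped with a $\Z_{(p)}$-basis such that the associated map $\rho_{B}\colon\P^{2}(\Q)\to\P^{2}(\Z/p)$ restricts to $\rho_{L}$ on $L$. Writing the basis as the columns of $g\in\GL_{3}(\Q)$, one has $\rho_{B}=\rho_{0}\circ\phi$ where $\phi\in\PGL_{3}(\Q)$ is represented by $N:=g^{-1}$; as $\rho_{0}$ only depends on the projective class of its argument, I may rescale $N$ (this rescales $B$ within its equivalence class) so that $N\in M_{3}(\Z_{(p)})$ has a unit entry. The hypothesis becomes $\rho_{0}\bigl(N(a,b,1)^{\top}\bigr)=[\bar a:\bar b:1]$ for all $a,b\in\Z_{(p)}$, and it suffices to impose it for $(a,b)\in\{0,1\}^{2}$ since the left-hand side factors through $(a,b)\mapsto(\bar a,\bar b)$. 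First I would check that $\bar N:=N\bmod p$ is invertible. Since $N$ has a unit entry, $\bar N\neq 0$; if $\rk\bar N\le 2$, then exactly one of $\P(\mathrm{im}\,\bar N),\ \P(\Ker\bar N)$ is a line of $\P^{2}(\F_{p})$ and the other a point, and for each $(a,b)$ the point $[\bar a:\bar b:1]$ lies in $\P(\mathrm{im}\,\bar N)$ when $(\bar a,\bar b,1)\notin\Ker\bar N$ (then $N(a,b,1)^{\top}$ is primitive with reduction $\bar N(\bar a,\bar b,1)^{\top}$) and in $\P(\Ker\bar N)$ otherwise. Hence $\A^{2}(\F_{p})$ is contained in the union of a line and a point; but a line meets $\A^{2}(\F_{p})$ in at most $p$ points, so this union meets $\A^{2}(\F_{p})$ in at most $p+1<p^{2}$ points — a contradiction. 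So $\bar N\in\GL_{3}(\F_{p})$, every $N(a,b,1)^{\top}$ is primitive, and the hypothesis says that $\bar N\in\PGL_{3}(\F_{p})$ fixes every point of $\A^{2}(\F_{p})$; since the four points $[0:0:1],[1:0:1],[0:1:1],[1:1:1]$ are in general position over every prime field (the $3\times3$ determinant of any three of them is $\pm1$), $\bar N$ is scalar, i.e.\ $N\equiv u\cdot\mathrm{Id}\pmod p$ with $u\in\F_{p}^{\times}$. Then $N\in\GL_{3}(\Z_{(p)})$, so $B=N^{-1}\Z_{(p)}^{3}$ equals $\Z_{(p)}^{3}$ after the rescaling above, and $B$ is equivalent to $\Z_{(p)}^{3}$.

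\emph{Obstacle.} I do not expect a deep obstacle: the heart of the matter is the translation of the compatibility hypothesis into the matrix condition and then the short rank argument. The points needing care are that $\rho_{B}$ depends on the chosen basis of $B$ (so only the equivalence class of $B$ can be pinned down) and the routine passage between $B$, $g$ and the rescaled $N$. As an alternative, the same computation identifies $\rho_{B}^{-1}([0:0:1])$ with $N_{[0:0:1],\,\Z_{(p)}^{3}}$, which lies in $L$, after which uniqueness follows from Lemma~\ref{lemm:drr}.
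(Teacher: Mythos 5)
Your proof is correct, but it runs along a different track than the paper's. The paper proves existence synthetically: it fixes a vector $\tilde{x}$ over a point $x\in L$, takes a $\Q^2\subset\Q^3$ whose projective line is disjoint from $L$ (the line at infinity), recovers a rank-two lattice $B'$ from the preimage of $L$ in $x+\Q^2$, and declares $B=\langle x, pB'\rangle$; uniqueness is then delegated to Lemma~\ref{lemm:drr}, since the fibres of $\rho_B$ over points of $\A^2(\Z/p)$ are exactly the sets $N_{\bar x,B}$. You instead observe that the standard lattice $\Z_{(p)}^3$ works on the nose, and prove uniqueness by a self-contained linear-algebra argument: writing any compatible $(B,\text{basis})$ as $\rho_0\circ\phi_N$ with $N$ scaled into $M_3(\Z_{(p)})$ with a unit entry, showing $\bar N$ is invertible by a counting argument (otherwise $\A^2(\F_p)$ would sit inside the union of a line and a point), and then using the rigidity of projectivities of $\P^2$ fixing four points in general position to force $\bar N$ scalar, hence $N\in\GL_3(\Z_{(p)})$ and $B$ equivalent to $\Z_{(p)}^3$. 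Your route is more explicit and avoids Lemma~\ref{lemm:drr} entirely (though your closing alternative, identifying $\rho_B^{-1}([0:0:1])$ with $N_{[0:0:1],\Z_{(p)}^3}$ and citing that lemma, is essentially the paper's uniqueness step); the paper's route is shorter because it reuses the fibre-rigidity already established. It also handles cleanly the point you flag, that only the pair (lattice, identification of $B/pB$) determines the map, while the equivalence class of $B$ is what is pinned down.

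One small slip: your parenthetical claim that the left-hand side $\rho_0\bigl(N(a,b,1)^{\top}\bigr)$ factors through $(a,b)\mapsto(\bar a,\bar b)$ is not justified at that stage — it is true only after $\bar N$ is known to be invertible (for singular $\bar N$ the primitive representative involves dividing by powers of $p$ and genuinely depends on finer data than the residues). This is harmless: the hypothesis is available for all $(a,b)\in\Z_{(p)}^2$, which is what your covering argument over all of $\A^2(\F_p)$ actually uses; alternatively, the four points $[0:0:1],[1:0:1],[0:1:1],[1:1:1]$ are in general position, so no line-plus-point can contain them, and the contradiction already follows from those four. Either repair leaves the proof intact.
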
 

\begin{proof}
Let $\tilde{x}\in \Q^3$ be an element projecting to $x \in L\subset \P^2(\Q)$. Consider 
a subspace $\Q^2\subset \Q^3$ such that the corresponding line $\ml\subset \P^2$ is disjoint from $L$. Then the preimage of 
$L$ in $x+\Q^2\subset \Q^3$ coincides with the $x+B'$, for some lattice $B'\subset \Q^3$, and 
the lattice generated by $x$ and $pB'$ is the desired lattice $B$. Indeed, $x+pB'$ is the preimage of a point in $\bar{L}$, hence
the sublattice $B$ defines a map $\rho_B$ with desired properties; all such lattice are equivalent, by  Lemma~\ref{lemm:drr}. 
\end{proof}

\begin{lemm}
\label{indunique -from line}
Assume that $f:\P^1(\Q)\ra A$ is induced
from a nonconstant map $\bar{f}:\P^1(\Z/p)\ra A$, via some lattice.
\begin{enumerate}
\item If  $\bar{f}$ is flag map, then
there are exactly two equivalence classes of $f$-compatible
lattices $B_1, B_2\subset \Q^2$.
\item If $\bar f$ is not a flag map, then
there is exactly one equivalence class of 
$f$-compatible lattices $B\subset \Q^2$. 
\end{enumerate}
\end{lemm}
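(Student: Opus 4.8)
The plan is to analyze the structure of an $f$-compatible lattice $B\subset \Q^2$ via the combinatorics of the nonconstant map $\bar f:\P^1(\Z/p)\ra A$. Recall that for such a $B$, each point $\bar x\in \P^1(\Z/p)$ corresponds to a coset $N_{\bar x,B}=\Q^\times\cdot(x_B+pB)$ in $\Q^2\setminus 0$, and $\P^1(\Q)$ is partitioned into the $p+1$ ``fibers'' $N_{\bar x,B}$, on each of which $f$ is constant with value $\bar f(\bar x)$. So determining $B$ up to equivalence amounts to determining this partition. The key local observation, which I would extract first, is Lemma~\ref{lemm:drr}: knowing a single fiber $N_{\bar x,B}$ already pins down $pB$ as the lattice $B_x$ spanned by the $z\in\Q^2\setminus 0$ with $z\notin N_{\bar x,B}$ but $\overline{x_B+z}=\bar x$; hence two $f$-compatible lattices that share even one common fiber are equivalent. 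Thus the entire question reduces to: how many distinct fiber-partitions of $\P^1(\Q)$ are compatible with the color pattern $\bar f$?

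First I would treat case (2), where $\bar f$ is not a flag map. Over $\P^1(\Z/p)$ with $p>2$ (the case $p=2$ being degenerate but a nonconstant map on $\P^1(\F_2)$ is automatically flag, so it does not arise here), failing to be a flag map means $\bar f$ takes a value on at least three points while another value is not ``concentrated at a point''; more precisely $\bar f$ is not of the form ``constant off one point.'' I would argue that if $B$ and $B'$ are both $f$-compatible, then comparing the two induced partitions, reduction mod $B$ and mod $B'$ differ by an element of $\PGL_2(\Q)$ that must stabilize the coloring $\bar f$ in the sense that it induces a permutation of $\P^1(\Z/p)$ commuting with $\bar f$ up to the relabeling coming from $B$ versus $B'$; the point is that a change-of-lattice map on $\P^1(\Q)$, when it fails to change the reduction, must be (up to scalar) in $\GL_2(\Z_{(p)})$, and I want to show that the only ambiguity is an honest change of $\Z_{(p)}$-basis, i.e.\ $B'$ equivalent to $B$. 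The cleanest route is to locate a single canonical fiber from the coloring data alone: since $\bar f$ is not flag, pick a value $a$ that is attained at $\ge 2$ points and such that the complement coloring is not concentrated at a point—then one checks (a short case analysis on $\P^1(\Z/p)$) that there is a distinguished $\bar x$ whose fiber $N_{\bar x,B}$ is forced to coincide with $N_{\bar x',B'}$, and Lemma~\ref{lemm:drr} finishes. This gives exactly one class.

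Next, case (1), where $\bar f:\P^1(\Z/p)\ra A$ is a nonconstant flag map: then there is a point $\mq\in\P^1(\Z/p)$ and a value $a$ with $\bar f\equiv a$ on $\P^1(\Z/p)\setminus\{\mq\}$ and $\bar f(\mq)=b\ne a$. Now the only distinguished structure in the coloring is the single point $\mq$: all $p$ points of the complement are interchangeable. Correspondingly, the fiber $N_{\mq,B}$ is the only one intrinsically determined, but knowing $N_{\mq,B}$ alone does not determine $B$—it determines $pB$ only as the span of $z$'s realizing $\overline{x+z}=\mq$, and there the argument of Lemma~\ref{lemm:drr} shows $B_x=pB$ for $x$ in that fiber, which is fine, but the subtlety is that the flag structure is symmetric under an extra involution. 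Concretely, over $\P^1$ the reduction-compatible lattices containing a prescribed ``special'' line split into two orbits under the residual symmetry: geometrically, fixing the point $\mq\in\P^1(\Z/p)$ and its fiber, one still has freedom to reflect, and the two resulting lattice classes $B_1,B_2$ are the ones obtained by, say, $\begin{pmatrix}1&0\\0&p\end{pmatrix}$ versus $\begin{pmatrix}1&0\\0&p^{-1}\end{pmatrix}$ relative to a basis adapted to $\mq$—these are inequivalent because their induced partitions of $\P^1(\Q)$ differ (one has the ``thick'' fiber on one side of $\mq$, the other on the other side), yet both are compatible since $\bar f$ cannot distinguish them. I would make this precise by exhibiting both explicitly and then showing any compatible $B$ shares a fiber with one of them, hence is equivalent to $B_1$ or $B_2$, and that $B_1\not\sim B_2$ by producing a point of $\P^1(\Q)$ on which the two induced maps disagree.

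The main obstacle I anticipate is the bookkeeping in case (1): proving that there are \emph{exactly} two classes, not more, requires showing that once the distinguished fiber $N_{\mq,B}$ is fixed, the lattice $B$ is determined up to the single binary choice above. This needs a careful argument that any further ambiguity would force two non-equivalent lattices to agree on a second fiber, contradicting Lemma~\ref{lemm:drr}; equivalently, that the stabilizer in $\PGL_2(\Q)$ of the pair (distinguished fiber, flag coloring) acts on the set of compatible lattice-partitions with exactly two orbits. I would handle this by reducing, via Lemma~\ref{lemm:aff-proj} and an affine chart centered so that $\mq=\infty$, to counting $\Z_{(p)}$-lattices in $\Q^2$ inducing a prescribed reduction on the affine line $\A^1(\Z_{(p)})$, where the two-orbit phenomenon becomes the elementary statement that $\Z_{(p)}$ and $p\cdot(\text{its transpose w.r.t. }\mq)$ exhaust the possibilities. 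Case (2) should then follow by the same machinery with the extra rigidity killing the second orbit.
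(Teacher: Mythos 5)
Your counts are right and leaning on Lemma~\ref{lemm:drr} as the rigidity tool is a sensible idea, but the sketch has two concrete gaps. In case (1), your explicit pair of compatible classes is wrong: relative to a basis $x,y$ of a compatible lattice $B$ with $\bar f$ nongeneric at the class of $y$, the lattices $\langle x,py\rangle$ and $\langle x,p^{-1}y\rangle$ (your $\mathrm{diag}(1,p)$ versus $\mathrm{diag}(1,p^{-1})$ ``reflection pair'') lie at distance two in the tree of lattice classes, and the first of them is \emph{not} $f$-compatible: its fiber over the reduction of $y$ contains both $y$ (where $f=b$) and $x+y$ (where $f=a$), which is exactly how the paper excludes the case $i=1$ via $f(x+y)=f(x)$. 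The correct pair consists of $B$ itself and its neighbor $\langle x,p^{-1}y\rangle$ (equivalently $\langle px,y\rangle$), for which $f^{-1}(a)$ is a single fiber and $f^{-1}(b)$ splits into $p$ fibers. Note also that these two compatible classes share no fiber whatsoever (and your aside that ``knowing $N_{\mq,B}$ alone does not determine $B$'' contradicts Lemma~\ref{lemm:drr} --- it does determine the class; the point is rather that the second class does not have that set as a fiber at all). Consequently your closing step, ``any compatible lattice shares a fiber with one of the two exhibited ones,'' is precisely the hard content and cannot be waved through; the paper does this work by showing any compatible class can be written $\langle x,p^iy\rangle$ and then pinning down $i\in\{-1,0\}$ by evaluating $f$ at points such as $x+p^{i+1}y$.

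In case (2) the proposed mechanism --- extract ``a single canonical fiber from the coloring data alone'' and then quote Lemma~\ref{lemm:drr} --- is not substantiated, and I do not think it can work as stated: if $\bar f$ is non-flag but not injective (say two values, each attained at at least two points of $\P^1(\Z/p)$), the level sets of $f$ single out no individual fiber, so nothing canonical is visible from the coloring alone; one must compare an arbitrary compatible lattice $B'$ with $B$ directly. Your remark that a lattice change which does not alter the reduction ``must be (up to scalar) in $\GL_2(\Z_{(p)})$'' is exactly the assertion to be proved, not an available fact. The paper's argument is the concrete version of this comparison: normalize $B'=\langle x,p^iy\rangle$, exclude $|i|\ge 2$ by evaluating at $x+p^{i+1}y$, and exclude the remaining nontrivial values of $i$ using that $\bar f$ is not constant on the complement of a point. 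Until your sketch supplies an argument of this kind (or an equivalent tree-distance argument), both the exactness in (1) and the uniqueness in (2) remain unproved.
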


\begin{proof}
By assumption, $f$ is induced via some $\rho_B$. 
Fix generators $x,y\in B$ 
such that $f(y)\neq f(x)$, in particular 
$\rho_B(x_B)\neq \rho_B(y_B)\in \P^1(\Z/p)$. 
We have 
$$
f(y+ pB) =f (y) \text{ and } f(x+pB)=f(x)\neq f(y).
$$
Any lattice $B'\subset \Q^2$ is equivalent to a lattice
with $x$ as a generator. Since $B'/\Z_{(p)} \cdot x \simeq \Z_{(p)}$,  $B'$ is
one of the following:
$B_i:=\langle x, p^i y\rangle$, 
for some $i\in \Z$.
If $f$ is induced from $B_i$, for some $i < -1$, then  
$$
f(x+p(p^i y))=f(x)\neq f(y) \text{ and } 
f(x+p(p^i y))= f (p^{-i-1} x + y) = f(y),
$$
a contradiction. 
The same argument gives a contractions when $i > 1$.
Thus $i=1,0$, or $-1$.

Analysis of values of $\bar f$ at other points
of $\P^1(\Z/p)$ leads to further restrictions.
We have the following cases: 
\begin{enumerate}
\item $\bar f$ is constant on $\P^1(\Z/p)\setminus \rho_B(y_B)$.
\item $\bar f$ is not
constant on the complement to a point in $\P^1(\Z/p)$.
\end{enumerate}
   
In Case (1), $f(x+y) =f(x)$, excluding
$i= 1$.
Then we have exactly two lattices $B_0, B_{-1}$
such that $f$ is induced from these (or equivalent) lattices. 

In Case (2), if $f$ is induced from $B_{-1}$
then $f(\kappa x+y)= f(y)$, for any $\kappa\in \Q$, and hence
$\bar f$ is constant on $\P^1(\Z/p)\setminus \rho_B(x_B)$,
contradicting the second condition.
Thus there is only one compatible lattice $B_0= B$, modulo equivalence.
\end{proof}

\begin{lemm} 
\label{lemm:eight}
Assume that $f: \P^2(\Q) \ra A$ satisfies the following: 
\begin{enumerate}
\item $f$ takes three values; 
\item 
$f$ takes at most two values on every line $\ml\subset \P^2$;  
\item 
on every $\P^1(\Q)\subset \P^2(\Q)$, 
$f$
is induced from a flag map on $\P^1(\Z/p)$, via $\rho_{B'}$, 
for some lattice $B'\subset \Q^2$.
\end{enumerate}
Then there are exactly three equivalence classes
of lattices $B_i\subset \Q^3$
such that $f$ is induced from a
flag map $\bar f:\P^2(\Z/p)\ra A$, via $\rho_{B_i}$, $i=1,2,3$. 
\end{lemm}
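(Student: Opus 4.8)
The plan is to recognise $f$ as a $3$-coloring, produce the reducing valuation from Proposition~\ref{prop:plane}, show it is the $p$-adic one and that the descended colour map is a flag map on $\P^2(\F_p)$, and finally count the compatible lattices by observing that the ``concentrated'' colour can be made to be any of the three.

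First I would note that (1) and (2) say exactly that $f$ is a $3$-coloring of $\P^2(\Q)$ in the sense of Section~\ref{sect:prelim}: no line is monochromatic, for if a line $\ml_0$ were, then every point off $\ml_0$ lies on a line through it meeting $\ml_0$ in a point of a second colour, forcing $f$ to take only two values. By Proposition~\ref{prop:plane} there are a nonarchimedean valuation $\nu$ of $\Q$, a trivial $3$-coloring $\mc_\nu$ of $\P^2(\kk_\nu)$, and a reduction $\rho$ with $f=\mc_\nu\circ\rho$. The valuation $\nu$ is nontrivial: otherwise $f$ itself would be a trivial $3$-coloring of $\P^2(\Q)$, and on a line $\ml$ meeting its special line (or point) in a single point $f|_\ml$ would attain a value at exactly one point of $\P^1(\Q)$, impossible for a map induced via some $\rho_{B'}$, whose fibres are infinite residue disks --- contradicting (3). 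So $\kk_\nu=\F_q$ for a prime $q$, and I would show $q=p$: choose a line $\bar{\ml}\subset\P^2(\F_q)$, lift two points of $\bar{\ml}$ of distinct $\mc_\nu$-colour to $\P^2(\Q)$, and let $\ml$ be the line through the lifts, so $\rho(\ml)=\bar{\ml}$ and $f|_\ml$ is nonconstant. Then some level set $f|_\ml^{-1}(c)$ equals $\rho^{-1}(S)$ for a nonempty proper $S\subsetneq\P^1(\F_q)$ --- a union of $q$-adic residue disks --- while by (3) the same set is a single $p$-adic residue disk $D\subset\ml$ with respect to a lattice $B'\subset\Q^2$. If $q\neq p$, weak approximation (density of $\P^1(\Q)$ in $\P^1(\Q_p)\times\P^1(\Q_q)$) forces $D$ to meet every $q$-adic residue disk of $\ml$, contradicting that $f|_\ml^{-1}(c)$ is disjoint from the $q$-adic disks over $\P^1(\F_q)\setminus S\neq\emptyset$. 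Hence $q=p$.

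Next I would show $\mc_\nu$ is a flag map on $\P^2(\F_p)$. Being a trivial $3$-coloring, $\mc_\nu$ is either constant off a line $\bar{\ml}_0$ (type I) or, off a point $\bar{\mq}_0$, pulled back by projection from a two-valued map $g$ on the pencil through $\bar{\mq}_0$ (type II). In the type I case, pick $\ml_0$ with $\rho(\ml_0)=\bar{\ml}_0$; then $f|_{\ml_0}=(\mc_\nu|_{\bar{\ml}_0})\circ(\rho|_{\ml_0})$ with $\rho|_{\ml_0}$ a $p$-adic reduction. Were $\mc_\nu|_{\bar{\ml}_0}$ not a flag map on $\P^1(\F_p)$, Lemma~\ref{indunique -from line}(2) would make $\rho|_{\ml_0}$ the unique $f|_{\ml_0}$-compatible lattice, so the flag map furnished by (3) would equal $\mc_\nu|_{\bar{\ml}_0}$, a contradiction; hence $\mc_\nu|_{\bar{\ml}_0}$, and therefore $\mc_\nu$, is a flag map. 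The type II case is identical, applied to a line $\ml$ with $\rho(\ml)$ a line avoiding $\bar{\mq}_0$, so that $f|_\ml$ is induced via a $p$-adic reduction from a copy of $g$, forcing $g$ (hence $\mc_\nu$) to be a flag map. Thus $f=\bar f\circ\rho_B$ for at least one $\Z_{(p)}$-lattice $B\subset\Q^3$ and a flag map $\bar f$ on $\P^2(\F_p)$ taking the three values of $f$. For the count, note that a flag map on $\P^2(\F_p)$ with three distinct values is constant with values $v_{\mathrm{gen}},v_{\mathrm{line}},v_{\mathrm{pt}}$ on $\P^2\setminus\bar{\ml}$, $\bar{\ml}\setminus\bar{\mq}$, $\{\bar{\mq}\}$ for a unique flag $\bar{\mq}\in\bar{\ml}$, and $v_{\mathrm{pt}}$ is the unique one of the three values attained on a single point of $\P^2(\F_p)$. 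Fixing coordinates with $B=\langle e_0,e_1,e_2\rangle$ and $\bar f$ having flag $[1:0:0]\in\{x_2=0\}$, I would write out the three level sets of $f$ in terms of $p$-adic valuations of homogeneous coordinates and check directly that the rescaled lattices
$$
B_{\mathrm{pt}}=\langle e_0,e_1,e_2\rangle,\qquad B_{\mathrm{line}}=\langle p^{-1}e_0,e_1,e_2\rangle,\qquad B_{\mathrm{gen}}=\langle p^{-1}e_0,p^{-1}e_1,e_2\rangle
$$
satisfy $f=\bar f_i\circ\rho_{B_i}$ with $\bar f_i$ a flag map whose point-value is respectively $v_{\mathrm{pt}}$, $v_{\mathrm{line}}$, $v_{\mathrm{gen}}$; the key point is that $f^{-1}(v_{\mathrm{line}})=\rho_{B_{\mathrm{line}}}^{-1}([0:1:0])$ and $f^{-1}(v_{\mathrm{gen}})=\rho_{B_{\mathrm{gen}}}^{-1}([0:0:1])$ are single residue disks, which is the two-variable analogue of the rescaling in the proof of Lemma~\ref{indunique -from line}(1). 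These lattices are pairwise inequivalent, since $B_i\sim B_j$ would give $\bar f_i=\bar f_j$ and hence equal point-values; and they exhaust the compatible lattices, because any $B'$ with $f=\bar f'\circ\rho_{B'}$, $\bar f'$ a flag map, has point-value one of $v_{\mathrm{pt}},v_{\mathrm{line}},v_{\mathrm{gen}}$, say $v$, so that $f^{-1}(v)$ is a residue disk of both $B'$ and the corresponding $B_i$, whence $B'\sim B_i$ by Lemma~\ref{lemm:drr}. This gives exactly three equivalence classes.

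The step I expect to be the main obstacle is the last computation: checking that for each of the three colours the corresponding level set of $f$ really is a single $p$-adic residue disk for the appropriate rescaled lattice, and that the resulting descended map on $\P^2(\F_p)$ is again a flag map. This is a finite but somewhat intricate bookkeeping with valuations, bootstrapping the one-dimensional rescaling of Lemma~\ref{indunique -from line}; by contrast the reduction to a $p$-adic reduction map and the flag-map property of $\mc_\nu$ rest only on Proposition~\ref{prop:plane}, weak approximation, and the uniqueness clause of Lemma~\ref{indunique -from line}(2).
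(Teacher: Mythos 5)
Your proposal is correct and follows essentially the same route as the paper: conditions (1)--(2) exhibit $f$ as a 3-coloring, Proposition~\ref{prop:plane} plus the uniqueness clause of Lemma~\ref{indunique -from line} force the descended map on $\P^2(\Z/p)$ to be a flag map, and the count reduces to the three rescalings of the lattice along the flag. The only difference is cosmetic and at the end -- the paper pins down the three classes by restricting to lines and using the two-lattice dichotomy of Lemma~\ref{indunique -from line}, while you exhibit the rescaled lattices $B_{\mathrm{pt}},B_{\mathrm{line}},B_{\mathrm{gen}}$ explicitly (your claimed identities, e.g.\ $f^{-1}(v_{\mathrm{line}})=\rho_{B_{\mathrm{line}}}^{-1}([0:1:0])$, do check out) and rule out further classes via Lemma~\ref{lemm:drr}; your preliminary verifications that the valuation is nontrivial and has residue characteristic $p$ fill in steps the paper leaves implicit.
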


\begin{proof}
Follows from Proposition~\ref{prop:plane}, applied to $k=\Q$  
(see also \cite{HalesStraus} 
or \cite{CommElts}). The first two conditions imply 
that there exists a lattice $B\subset \Q^3$
such that $f$ is induced from some 
map $\bar{f}: \P^2(\Z/p)\ra A$, via $B$. 
Applying both statements of Lemma~\ref{indunique -from line}, 
we conclude that $\bar{f}$ is a flag map, with 3 distinct values.  
Hence 
$$
\P^2(\Q)=S_1\sqcup S_2\sqcup S_3,
$$
with $S_1$ the preimage of an affine plane in $\P^2(\Z/p)$, 
$S_2$ an affine line, and $S_3$ a point in $\P^2(\Z/p)$, and $f$
is constant on these sets. 

By Lemma~\ref{indunique -from line},
for any $B'\subset \Q^3$ such that $f$ is induced
from $\P^2(\Z/p)$ via $\rho_{B'}$, 
the restriction of 
$f$ to any $(\Q^2\setminus 0)\subset (\Q^3\setminus 0)$ is induced
from a flag map on $\P^1(\Z/p)$. 

Thus $f$ is also induced from a flag map, via $\rho_{B'}$.
On the other hand, 
in coordinates $x_1,x_2,x_3$,
we have 
$$
S_1 = \{ x_1\neq 0\}, \quad S_2 = \{ x_1= 0, x_2\neq 0 \}, \quad S_3 = \{ x_1= x_2=0, x_3\neq 0 \},
$$
and the only possible coordinates compatible with the structures
on all $\P^1(\Q)$ are  
$$
x_1, \frac{x_2}{p}, \frac{x_3}{p},\quad 
x_1,x_2,\frac{x_3}{p}, \text{ and } \quad x_1,x_2,x_3. 
$$
This gives exactly three equivalence classes of 
$f$-compatible lattices.
\end{proof}

\begin{coro}
\label{coro:point}
Assume that we are in the situation of Lemma~\ref{lemm:eight} so that $f$ 
is induced from a flag map 
$$
\bar{f}:\P^2(\Z/p)\ra \{ 1,2,3\}.
$$ 
Let 
$\mq\subset \ml\subset \P^2(\Z/p)$ be the corresponding flag. 
Let $B'$ be another $f$-compatible lattice, 
$$
\bar{f}':\P^2(\Z/p)\ra \{1,2,3\}
$$ 
the corresponding flag map inducing $f$, 
and $\mq'\subset \ml'\subset \P^2(\Z/p)$
the associated flag. 
If $\bar{f}(\mq)= \bar{f}'(\mq')$ then $B$ and $B'$ are
equivalent and $\bar{f}=\bar{f}'$. 
\end{coro}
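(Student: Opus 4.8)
The plan is to leverage Lemma~\ref{lemm:eight}, which tells us there are exactly three equivalence classes of $f$-compatible lattices, together with Lemma~\ref{indunique -from line}, which controls the ambiguity one dimension down. First I would fix the flag map $\bar f$ inducing $f$ via the lattice $B$, with associated flag $\mq\subset\ml\subset\P^2(\Z/p)$, and set up coordinates $x_1,x_2,x_3$ so that, as in the proof of Lemma~\ref{lemm:eight},
$$
S_1=\{x_1\neq 0\},\quad S_2=\{x_1=0,\ x_2\neq 0\},\quad S_3=\{x_1=x_2=0,\ x_3\neq 0\},
$$
where $S_1,S_2,S_3$ are the three level sets of $f$ on $\P^2(\Q)$ and $\bar f$ takes the values, say, $\bar f(\mq)=3$ on the point, $2$ on $\ml\setminus\mq$, and $1$ on the affine plane. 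The three $f$-compatible lattice classes are then represented, in these coordinates, by
$$
B\simeq\langle x_1,\tfrac{x_2}{p},\tfrac{x_3}{p}\rangle,\qquad
\langle x_1,x_2,\tfrac{x_3}{p}\rangle,\qquad
\langle x_1,x_2,x_3\rangle .
$$

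Next I would observe that under the reduction map $\rho_{B'}$ attached to each of these three lattices, the flag $(\mq',\ml')$ one recovers is \emph{different}, and in particular the value $\bar f'(\mq')$ that the flag map takes at the distinguished point $\mq'$ is different for the three lattices. Concretely: for each of the three lattices one computes which of $S_1,S_2,S_3$ becomes the preimage of the point, which becomes the preimage of the affine line, and which becomes the preimage of the affine plane; these assignments are permuted as one passes from one lattice class to the next (this is the content of the explicit coordinate list above — scaling a coordinate by $p$ moves the corresponding stratum up or down in the flag). Hence the map
$$
\{\text{three }f\text{-compatible lattice classes}\}\longrightarrow\{1,2,3\},\qquad B'\mapsto \bar f'(\mq'),
$$
is injective, and since domain and target both have three elements, it is a bijection. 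Therefore $\bar f(\mq)=\bar f'(\mq')$ forces $B'$ to be equivalent to $B$; and once the lattices agree, the inducing flag map is determined by $f$, so $\bar f=\bar f'$.

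The main obstacle will be the bookkeeping in that middle step: one must verify carefully that the three lattices in the list genuinely realize all three cyclic (indeed all three) assignments of the strata $S_1,S_2,S_3$ to the flag $(\text{plane},\text{line},\text{point})$, and in particular that no two of them give the same value at the distinguished point. This is a direct calculation with the reduction maps $\rho_{B'}$ — for a lattice $\langle p^{a_1}x_1,p^{a_2}x_2,p^{a_3}x_3\rangle$ one reads off, for a point of $\P^2(\Q)$ written in primitive integral coordinates with respect to that lattice, which coordinates are divisible by $p$, and hence which stratum of $\P^2(\Z/p)$ it lands in — but it is the only place where something must actually be checked rather than quoted. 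Everything else is immediate from Lemma~\ref{lemm:eight} and the uniqueness statements already proved.
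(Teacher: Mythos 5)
Your proposal is correct and coincides with the paper's intended argument: the corollary is stated as a direct consequence of the explicit coordinate classification of the three $f$-compatible lattice classes obtained in the proof of Lemma~\ref{lemm:eight}. The bookkeeping you defer does check out — rescaling along the flag cyclically permutes which of the strata $S_1,S_2,S_3$ reduces to the point, the affine line, and the affine plane of $\P^2(\Z/p)$, so the three classes yield three distinct values $\bar{f}'(\mq')$, and once the class is fixed, $\bar{f}'=\bar{f}$ follows from surjectivity of the reduction map.
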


Thus if $f:\P^1(\Q)\ra A$ is induced from $\bar{f}:\P^1(\Z/p)\ra A$ then 
one of the following holds:
\begin{enumerate}
\item 
The map $\rho: \P^1(\Q)\to \P^1(\Z/p)$ inducing $f$ is unique;
in particular, for any $\mq\in \P^1(\Z/p)$ the subset
$L_p := \rho^{-1} (\P^1(\Z/p)\setminus \mq)$ is affinely isomorphic to $\Z_{(p)}\subset \Q\subset \P^1(\Q)$
and the map $L_p \to \Z/p$ is uniquely defined as a linear
map 
$$
\begin{array}{ccc}
\Z_{(p)} & \to &  \Z/p\\
x           &  \mapsto &  x \pmod{p}.
\end{array}
$$
\item $f$ is a flag map and there are two
possible $\rho: \P^1(\Q)\to \P^1(\Z/p)$ inducing $f$.
\end{enumerate}

\section{Basic field theory}
\label{sect:fields}

Let 
$$
k\subseteq \tilde{k}\subseteq \tilde{k}_a\subseteq K
$$ 
be an extension of fields. 
We say that $x_1, x_2 \in K^\times/k^\times$ 
are algebraically dependent
with respect to $\tilde{k}$ 
if they satisfy Equation~\eqref{eqn:dependent} from the Introduction; 
in this case, we write $x_1\sim_{\tilde{k}} x_2$, or simply $x_1\sim x_2$. 
We record the following obvious properties of this equivalence relation:
\begin{itemize}
\item[(AI)] 
If $x_1\sim_{\tilde{k}} x_2$,  
$x_1/x_2\notin \tilde{k}^\times_a/k^\times$, and 
$x\not\sim_{\tilde{k}} x_1$ then $x_1/x\not\sim_{\tilde{k}} x_2/x$. 
\item[(AG)] The set of nonconstant algebraically dependent elements, together 
with $(\tilde{k}^\times_a/k^\times)$ forms a subgroup of $K^\times/k^\times $.
\end{itemize}

\begin{lemm} 
\label{lemm:dontneed}
Let $K|k$ and $L|l$ be field extensions 
and 
\begin{equation}
\label{eqn:pssi}
\psi:K^{\times}/k^{\times}\to L^{\times}/l^{\times}
\end{equation}
a homomorphism such that its restriction to  
$\mo_{K,\nu}^\times/ \mo_{k,\nu}^\times$ factors as
\begin{equation}
\label{eqn:psi}
\mo_{K,\nu}^{\times}/\mo_{k,\nu}^{\times}\longrightarrow\hspace{-3mm}\to 
\KK_{\nu}^{\times}/\kk_{\nu}^{\times} \stackrel{\psi_\nu}{\lra} L^{\times}/l^{\times}.
\end{equation}
Assume that $\psi_{\nu}$ 
preserves algebraic dependence with respect to $\kk_{\nu}$ and $l$.
Then $\psi$ also preserves algebraic dependence with respect to $k$ and 
$l$.
\end{lemm}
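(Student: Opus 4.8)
The plan is to reduce the general algebraic-dependence statement for $\psi$ to the already-assumed dependence-preservation property of $\psi_\nu$, by tracking how the relation $\sim$ behaves under the valuation-theoretic maps in \eqref{eqn:psi}. First I would take $x_1,x_2\in K^\times/k^\times$ with $x_1\sim_k x_2$ and lifts $x_1,x_2\in K^\times$, so that $\trdeg_k(k(x_1,x_2))\le 1$; the goal is to show $\psi(x_1)\sim_l\psi(x_2)$ in $L^\times/l^\times$. Since the hypothesis only controls $\psi$ on $\mo_{K,\nu}^\times/\mo_{k,\nu}^\times$, the immediate difficulty is that $x_1$ or $x_2$ need not lie in the valuation ring. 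The standard device is to multiply by a suitable common factor: because $x_1$ and $x_2$ generate a subfield of transcendence degree $\le 1$ over $k$, the valuation $\nu$ restricted to $k(x_1,x_2)$ has value group of rank $\le 1$ (in fact the relevant quotient $\nu(x_1)\Z+\nu(x_2)\Z$ is a subgroup of a rank-one ordered group, hence cyclic or embeds in $\Q$), so after replacing $x_i$ by $x_i x_0^{m_i}$ for an appropriate element $x_0$ with $x_0\sim_k x_i$ and suitable integers $m_i$ — or more cleanly, by observing that one can scale so both representatives become units — we may assume $x_1,x_2\in\mo_{K,\nu}^\times$. Here I would invoke property (AG) from Section~\ref{sect:fields}: the nonconstant elements algebraically dependent with a fixed $x_1$ form a subgroup containing $\tilde k_a^\times/k^\times$, so such rescalings stay within the same one-dimensional field and do not disturb the relation $x_1\sim_k x_2$.

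Next, with $x_1,x_2\in\mo_{K,\nu}^\times/\mo_{k,\nu}^\times$, I would pass to their images $\bar x_1,\bar x_2\in\KK_\nu^\times/\kk_\nu^\times$ under the reduction surjection $\mo_{K,\nu}^\times/\mo_{k,\nu}^\times\twoheadrightarrow\KK_\nu^\times/\kk_\nu^\times$. The key point is that reduction preserves algebraic dependence: if $\trdeg_k(k(x_1,x_2))\le 1$ then, since $k\subseteq\kk_\nu$ (the prime field sits inside the residue field) and residues of algebraically dependent elements are algebraically dependent over the residue field of the base, one gets $\trdeg_{\kk_\nu}(\kk_\nu(\bar x_1,\bar x_2))\le 1$, i.e. $\bar x_1\sim_{\kk_\nu}\bar x_2$. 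This is the one genuinely field-theoretic input and is where I expect the main obstacle to lie — one must be careful that taking residues can only decrease transcendence degree, using that the residue field extension $\KK_\nu|\kk_\nu$ together with the value-group contribution accounts for all of $\trdeg$, via the Abhyankar-type inequality; the case $x_i\in\mo_{k,\nu}^\times$ (constant residue) must be handled separately but is trivial since then $\bar x_i=1$.

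Finally, by the factorization \eqref{eqn:psi}, $\psi(x_1)=\psi_\nu(\bar x_1)$ and $\psi(x_2)=\psi_\nu(\bar x_2)$ in $L^\times/l^\times$. Applying the hypothesis that $\psi_\nu$ preserves algebraic dependence with respect to $\kk_\nu$ and $l$ to the pair $\bar x_1\sim_{\kk_\nu}\bar x_2$ yields $\psi_\nu(\bar x_1)\sim_l\psi_\nu(\bar x_2)$, that is, $\psi(x_1)\sim_l\psi(x_2)$. Unwinding the rescaling from the first step — which only multiplied $x_i$ by elements of the one-dimensional field containing $x_i$, hence changed $\psi(x_i)$ by elements of the one-dimensional field containing $\psi(x_i)$ — we recover the claim for the original $x_1,x_2$, completing the proof. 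The only subtlety to write out carefully is the compatibility of the rescaling with both $\sim_k$ on the source and $\sim_l$ on the target, which again follows from (AG).
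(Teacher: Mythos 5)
Your central device — represent both classes by units, pass to residues, note that residues of elements generating a field of transcendence degree $\le 1$ over $k$ are algebraically dependent over $\kk_\nu$, and then apply the hypothesis on $\psi_\nu$ — is sound exactly in the situation where the paper also uses it, namely its Case 1: if $E\subset K$ is the one-dimensional field containing $x_1,x_2$ and $\nu(E^\times)=\nu(k^\times)$, then $E^\times=\mo_{E,\nu}^\times\cdot k^\times$ and every class in $E^\times/k^\times$ is already represented by a unit, so no rescaling is needed. The gap is in your reduction to this situation. First, the rescaling itself is not always available: $\nu(E^\times)/\nu(k^\times)$ has rational rank $\le 1$ but need not be cyclic (and may have torsion), so a single $x_0\in E$ and integers $m_1,m_2$ with $x_1x_0^{m_1},x_2x_0^{m_2}\in\mo_{K,\nu}^\times\cdot k^\times$ need not exist. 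Second, and decisively, your unwinding step is circular: the factors $x_0^{m_i}$ are in general \emph{not} units, and the hypotheses of the lemma say nothing about $\psi$ on non-units beyond its being a homomorphism; the assertion that $\psi(x_0^{m_i})$ lies in the one-dimensional subfield of $L$ attached to $\psi(x_i)$ is precisely the statement that $\psi$ maps the dependence class $E^\times/k^\times$ into a single dependence class of $L$ — i.e., the conclusion you are trying to prove. Property (AG) controls the relation $\sim_{k}$ on the source only; it gives no control over $\psi$ of the rescaling factors in the target.

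What is missing is therefore the whole case $\nu(k^\times)\subsetneq\nu(E^\times)$, which the paper treats by a different mechanism: there the residues of units of $E$ are algebraic over $\kk_\nu$ and carry no useful information, and instead one shows that $\nu(E^\times)/\nu(k^\times)$ has $\Q$-rank $1$, that consequently $\psi(k_{y,z}(x)^\times)$ is the product of a finite group and $\Z$ for the relevant finite subextensions $k_{y,z}(x)\subset E$, and hence that \emph{all} elements of $\psi(E^\times/k^\times)$ are algebraically dependent over $l$ — a rank argument on the image, with no comparison of $\psi(x_i)$ to residues at all. Your proposal has no substitute for this half of the argument, so as written it only covers the paper's Case 1. (A minor additional slip: the value group of $\nu$ on $k(x_1,x_2)$ can have rank $2$ when $\nu$ is nontrivial on the prime field, e.g. $p$-adic on $\Q$; only the quotient by $\nu(k^\times)$ has rational rank $\le 1$, as you partially acknowledge.)
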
 

\begin{proof}
Let $k(x)\subset K$ be a purely transcendental extension 
and 
$$
E=\overline{k(x)}\subset K
$$ 
its algebraic closure in $K$. 
We claim that the restriction of $\psi$ to $E^\times/k^\times $ preserves 
algebraic dependence.
This is clear if $\psi$ 
is injective and preserves algebraic
dependence. Now assume that $\psi$ is defined through a 
valuation $\nu$, i.e., as in \eqref{eqn:psi}.
There are two cases:

\

\noindent
{\em Case 1.} $\nu(k^{\times})=\nu(E^{\times})$. 
Then 
$$
E^{\times}=\mo_{E,\nu}^{\times} \cdot k^{\times}.
$$ 
Since $\psi_{\nu}$ preserves algebraic dependence 
with respect to $\kk_{\nu}$ and $l$, the claim follows.   
 
\

\noindent
{\em Case 2.} $\nu(k^{\times})\subsetneq \nu(E^{\times})$. 
Then $\nu(E^{\times})/\nu(k^{\times})$ 
has $\mathbb Q$-rank $1$, i.e., for $y,z\in E^\times$ with nonzero 
$\nu(y),\nu(z)\in\nu(E^{\times})/\nu(k^{\times})$ 
there are nonzero $n_y,n_z\in \Z$ 
such that $n_y\nu(y)=n_z\nu(z)$. 
Indeed, $y,z$ define a finite algebraic extension $k_{y,z}(x)|k(x)$, 
hence $\nu$ is nontrivial on $k(x)$,
and the group 
$$
\nu(k_{y,z}(x)^{\times})/ \nu(k(x)^{\times})
$$ 
is finite.
Let $g\in k(x)^\times$ be such that the image of $\nu(g)$ in 
$\nu(E^{\times})/\nu(k^{\times})$ is infinite. Then for any 
$\sum_{i=0}^n a_ig^i$, with $a_i\in k$,  
$$
\nu(\sum_{i=0}^n a_ig^i)=\min_i(\nu (a_ig^i)),
$$ 
since none of the monomials $a_ig^i$ have the same value under $\nu$. Thus, 
$$
\nu(k(g)^\times)=\nu(k^{\times})\times \langle \nu(g)\rangle,
$$ 
The extensions $k_{y,z}(x)|k(x)$ and  $k(x)|k(g)$ are
finite, thus
$$
\nu(k_{y,z}(x)^{\times})/(\nu(k^{\times})\times \langle \nu(g)\rangle)
$$ 
is also finite, which implies the result for $\nu(E^{\times})$. 
Since $\psi(k^\times)= 1$,
$\psi (k_{y,z}(x)^\times)$ is 
the product of a finite group and $\Z$.
In particular, $\psi (k_{y,z}(x)^\times)$ 
consists of algebraically
dependent elements.
Since $E$ is a union of subfields
$k_{y,z}(x)$, the same holds for $E^\times$.

Thus $\psi(E^\times/k^\times)$ 
coincides with the image of $\nu(E^{\times})/\nu(k^{\times})$. 
Since all elements in $\nu(E^{\times})/\nu(k^{\times})$ have the same powers we see 
that lifts of elements in $\psi(E^{\times})$ to $L^\times$ are algebraically dependent over $l$. 
\end{proof}

\section{Restriction to planes}
\label{sect:lines}

Here we study restrictions of homomorphisms
$$
\psi : \P(K)=K^\times/k^\times\ra A:= L^\times/\tilde{l}^\times,
$$
satisfying assumptions of Theorem~\ref{principal_theorem},
to projective planes $\Pi\subset \P(K)$. 

\begin{prop}
\label{prop:17}
Let $\Pi:=\Pi(1,x,y)\subset \P(K)$ be such that
$\psi(x)\not\sim \psi(y)$. Then one of the following holds:
\begin{itemize}
\item[(a)] $\psi_{\Pi}$ is injective. 
\item[(b)] There exists a line $\ml\subset \Pi$ such that $\psi_{\Pi}$ is constant on $\Pi\setminus \ml$.
\item[(c)] There exists a point $\mq\in \Pi$ such that $\psi_{\Pi}$ is constant on
$\ml\setminus \mq$, for every $\ml\subset \Pi$ passing through $\mq$.
\item[(d)] $k=\mathbb Q$, $\psi_\Pi$ is induced 
from 
$$
\bar\psi_{\Pi} : \P^2(\Z/p) \ra A, 
$$ 
via a lattice $B\subset \Q^3$, and $\bar\psi_{\Pi}$ is of type $(a), (b)$, or $(c)$. 
\end{itemize}
\end{prop}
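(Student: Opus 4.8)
The plan is to analyze the homomorphism $\psi_\Pi$ on the plane $\Pi = \Pi(1,x,y)$ via the color/flag machinery of Sections~\ref{sect:prelim}--\ref{sect:zp}. First I would consider the lines through $1$: each such line $\ml(1,z)$ is a one-dimensional projective space, and $\psi$ restricted to it is a group homomorphism from $\ml(1,z)^\times/k^\times$ (a rank-one object). Because $\psi$ preserves algebraic dependence, the image $\psi(\ml(1,z))$ lies in a single one-dimensional field over $\tilde l$ (up to $\tilde l^\times$); using Assumption (AD) together with the structure of homomorphisms of multiplicative groups of one-dimensional fields, one shows $\psi_{\ml(1,z)}$ is either injective or a flag map (this is where results of \cite{HalesStraus} and \cite{AbSubgrps} enter — a homomorphism of the multiplicative group of a one-dimensional field to an abelian group is, up to the flag/valuation dichotomy, very rigid). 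The key dichotomy to set up at the start: \emph{either there is a line $\ml\subset\Pi$ on which $\psi_\Pi$ is injective, or on every line $\psi_\Pi$ is a flag map.}

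In the second case — $\psi_\Pi$ is a flag map on every line of $\Pi$ — I would invoke Lemma~\ref{lemm:qq} directly: over $k=\F_p$ (with $p>2$) or $k=\Q$ this forces $\psi_\Pi$ itself to be a flag map on the plane, which (via Proposition~\ref{prop:plane} / Proposition~\ref{prop:main} applied to the plane, or the $\Z_{(p)}$-lattice analysis of Section~\ref{sect:zp} when $k=\Q$) yields precisely alternatives (b), (c), or (d). The residual case $k=\F_2$ is handled by Lemma~\ref{flag_homomorphisms} and Lemma~\ref{2-maps}: if $\psi_\Pi$ fails to be a flag map, then $\psi(x)=\psi(y)$ with $\psi(x)^2=1$, forcing $\psi(x)\sim\psi(y)$, which contradicts the hypothesis $\psi(x)\not\sim\psi(y)$ of the proposition. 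So in the flag case we always land in (b), (c), or (d).

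In the first case — there is a line $\ml_0\subset\Pi$ with $\psi_\Pi|_{\ml_0}$ injective — the goal is to promote injectivity on one line to injectivity on the whole plane, giving (a). Here the hypothesis $\psi(x)\not\sim\psi(y)$ is essential: it guarantees the image is genuinely "two-dimensional" and that $\psi_\Pi$ is not forced into a valuation-type map. I would argue by considering an arbitrary point $\mq\in\Pi$ and the pencil of lines through it; if two distinct points $\mq_1\neq\mq_2$ of $\Pi$ had $\psi(\mq_1)=\psi(\mq_2)$, then the line $\ml(\mq_1,\mq_2)$ is non-injective, hence (by the previous paragraph's dichotomy applied line-by-line, and by the algebraic-dependence constraint tying lines together) $\psi_\Pi$ would be a flag map on enough lines to force the flag alternative globally — contradicting injectivity on $\ml_0$. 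One must also rule out the possibility that $\psi_\Pi$ is injective on $\ml_0$ but, say, a nonconstant flag map when restricted to some transverse line; the compatibility of the one-dimensional-field structure across intersecting lines (property (AI) of Section~\ref{sect:fields}) is what prevents this.

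The main obstacle I anticipate is the first case: carefully showing that injectivity on a single line, combined with $\psi(x)\not\sim\psi(y)$ and Assumption (AD), propagates to injectivity on all of $\Pi$ — in effect ruling out "mixed" behavior where $\psi_\Pi$ is injective on some lines and a degenerate flag/valuation map on others. This requires exploiting the interplay between the multiplicative-group structure (a point of a line $\ml(\mq_1,\mq_2)$ corresponds to a ratio $\lambda\mq_1 + \mu\mq_2$ in $K$, i.e.\ to an element of a one-dimensional field) and the preservation of algebraic dependence, rather than pure projective combinatorics; the coloring lemmas handle the flag case cleanly but say nothing directly about when the injective alternative holds.
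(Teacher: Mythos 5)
Your opening dichotomy is not valid, and it is exactly where the content of the proposition lives. You assert that on each line through $1$ the restriction of $\psi$ is either injective or a flag map, citing the rigidity of homomorphisms of multiplicative groups of one-dimensional fields; but a projective line $\ml(1,z)\subset\P(K)$ is not multiplicatively closed, so $\psi_{\ml}$ is not a homomorphism and no such rigidity applies. Indeed the paper's own taxonomy in Section~\ref{sect:new-lines} lists line types $(\mathrm N)$, $(\bar{\mathrm I})$, $(\bar{\mathrm N})$, $(\bar{\mathrm F})$ --- non-injective, non-flag restrictions, including maps induced from an \emph{injective} map on $\P^1(\Z/p)$. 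Consequently your two branches (``some line injective'' versus ``flag on every line'') do not exhaust the possibilities: alternative (d) with $\bar\psi_\Pi$ injective falls into neither, since then no line of $\Pi$ is injective and no line is a flag map (even in the extended sense over $\Q$). For the same reason the argument you sketch in the first branch --- a non-injective line forces flag behavior on enough lines to contradict injectivity on $\ml_0$ --- would fail: $\psi_\Pi$ factoring through $\rho_B:\P^2(\Q)\to\P^2(\Z/p)$ is non-injective on every line without being flag anywhere. You also invoke Assumption (AD), which plays no role in this proposition; it is a global hypothesis used only later, in the proof of the main theorem.

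The paper's proof runs along a different axis, and you would need its key normalization to make progress. One assumes $\psi_\Pi$ is not injective (else (a)), and after multiplying by $x_1^{-1}$ arranges that the fiber $S_1=\{x'\in\Pi \mid \psi(x')=1\}$ has at least two points. The dependence-preservation hypothesis, through the auxiliary map $\psi_\sim$ with values in dependency classes and the properties (TI), (TC) (plus Lemmas~\ref{lemm:one} and \ref{lemm:DE}, which exploit (AI)), shows that any decomposition $\Pi=S_1\sqcup S_2\sqcup S_3$ by dependency classes is a 3-coloring in the sense of Section~\ref{sect:prelim}. Proposition~\ref{prop:plane} (Hales--Straus) then yields the trichotomy: $S_1$ is the complement of a line (case (b)), a union of punctured lines through a point (case (c), completed by Lemma~\ref{lemm:conc}), or, when $k=\Q$, the coloring is induced from $\P^2(\Z/p)$, and Lemmas~\ref{lemm:piz-rest} and \ref{lemm:anew} upgrade this to the statement that $\psi_\Pi$ itself is induced from a map of type (a), (b), or (c), giving (d). Your flag-case branch (via Lemma~\ref{lemm:qq}, with the $\F_2$ caveat handled by Lemma~\ref{flag_homomorphisms}) is compatible with this in spirit, but it only recovers the subcases where $\psi_\Pi$ is flag on every line; the coloring argument is what covers the remaining, genuinely non-flag, non-injective configurations.
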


\begin{proof}
Assume that $\psi_{\Pi}$ is not injective: there are distinct $x_1, x_2\in \Pi$,
with $\psi(x_1)=\psi(x_2)\neq 1$. Consider 
$$
\Pi_1:= x_1^{-1}\cdot \Pi=\Pi(1, 1/x_1, y/x_1),
$$
since $\psi(y)\not\sim \psi(1/x_1)$, 
$\Pi_1$ satisfies the conditions of the theorem; if it
holds for $\Pi_1$ then it holds for the initial $\Pi$.
Thus we may assume that 
$$
S_1:=\{x'\in \Pi \,\mid \, \psi(x')=1\}
$$
contains at least two elements. 
Consider the map 
$$
\psi_{\sim}: \P(K)\ra A_{\sim},
$$
with values in dependency classes: 
\begin{itemize}
\item
$\psi_{\sim}(x')=1$ if $\psi(x')=1$ ,
\item 
$\psi_{\sim}(x')=\psi_{\sim}(x'')$ 
iff $\psi(x'),\psi(x'')\neq 1$ and $\psi(x')\sim\psi(x'')$.
\end{itemize}
We record properties of $\psi_{\sim}$: 
\begin{itemize}
\item[(TI)]
For every $\ml\subset \Pi$ with
$\ml\cap S_1=\emptyset$, we have
$$
\{ \psi_{\sim}(x') \,\mid x'\in \ml\} = \{\psi_{\sim}(x'')\, \mid x''\in \Pi\setminus S_1\}, 
$$
in particular, $\psi(\ml)$ has 
has algebraically independent elements.
\item[(TC)]
For every $\ml\subset \Pi$  with $\ml\cap S_1\neq \emptyset$, 
$\psi_{\sim}$ is constant on
$\ml\setminus (\ml\cap S_1)$. 
\end{itemize}
Property (AI) from Section~\ref{sect:fields} relates 
$\psi_{\sim}$ and $\psi$.

\begin{lemm}
\label{lemm:one}
If $\ml\cap S_1=\emptyset$ and $x', x''\in \ml$ are such that $\psi(x')\sim\psi(x'')$ then 
$\psi(x')=\psi(x'')$. 
\end{lemm}

\begin{proof}
There is a $z\in\ml$ with $\psi(z)\not\sim
\psi(x'), \psi(x'')$. Since $z^{-1}\cdot \ml\cap S_1\neq \emptyset$, 
all values  of $\psi$ on $\ml(x'/z,x''/z)\setminus 1$ 
are algebraically dependent. By (AI), if  $\psi(x')\neq \psi(x'')$ then 
$\psi(x')/\psi(z)\not\sim\psi(x'')/\psi(z)$, a contradiction. 
\end{proof}

\begin{lemm}
\label{lemm:DE}
Let $\ml,\ml'\subset \Pi$ be disjoint from $S_1$, 
put $z:=\ml\cap \ml'$, 
and assume that there exist
$x\in \ml$ and $x'\in \ml'$ such that
$$
\psi(x)\sim \psi(x') , \psi(x)\neq \psi(x'),
\quad \text{ and }\quad \psi(x),\psi(x')\not\sim \psi(z).
$$ 
Let $y\in \ml$ and $y'\in \ml'$ 
be such that $\psi(y)\sim \psi(y')$. 
Then either 
\begin{itemize}
\item  $\psi(y)\neq \psi(y')$, or
\item 
$\psi(y)=\psi(y')=\psi(z)$.
\end{itemize} 
\end{lemm}

\begin{proof}
By the same argument as in  
Lemma~\ref{lemm:one}, using (AI),  
$$
\psi(x)/\psi(z)\sim \psi(y)/\psi(z), \quad \text{ and  } 
\psi(x')/\psi(z)\sim \psi(y')/\psi(z),
$$ 
If $\psi(y)=\psi(y')\neq \psi(z)$
then $\psi(x)/\psi(z)\sim \psi(x')/\psi(z)$, contradiction.
\end{proof}

Let $\{T_j\}_{j\in J}$ be the set of 
intersections of algebraic dependency classes in $\P(K)$ with $\Pi$.
Split $J=J_2\sqcup J_3$ and consider the decomposition 
\begin{equation}
\label{eqn:pi}
\Pi=S_1\sqcup S_2\sqcup S_3, \quad \text{with} \quad 
S_1= T_1, S_2=\sqcup_{j\in J_2} T_j, \quad S_3= \sqcup_{j\in J_3} T_j. 
\end{equation} 
For any such decomposition, the induced map
$$
\Psi=\Psi_{\Pi}: \Pi\ra \{1, 2,3\}
$$
factors through $\psi_{\sim}$ and satisfies the conditions of 
Proposition~\ref{prop:plane}. 
Thus $\Psi$ is induced from a trivial coloring, 
with $S_1$ not depending on 
the decomposition.  
Since there exist 
lines disjoint from $S_1$, and $S_1$ contains at least two points, 
it follows that either
\begin{enumerate}
\item[(B)] 
$S_1=\Pi\setminus \ml$, for some $\ml\subset \Pi$,
and we are in Case (b), or
\item[(C)] 
$S_1=\cup_{i\in I} (\ml_i\setminus \mq)$, for some  
$\mq\in \Pi$ and $\ml_i$ through $\mq$, 
and we are in Case (c), or 
\item[(D)] 
$k=\mathbb Q$, and $\Psi$ is induced from a trivial coloring on 
$\P^2(\Z/p)$.
\end{enumerate}
Note that in Case (B), $\psi\equiv 1 $ on the affine plane
$\Pi\setminus \ml$.

\begin{lemm}
\label{lemm:conc}
In case $\mathrm{(C)}$, $\psi$ is constant on an affine plane, or
on $\ml_i\setminus \mq$, for all $i\in I$.
\end{lemm}

\begin{proof}
Consider $x\in \Pi\setminus (S_1\cup \mq)$ and lines $\ml$ 
containing $x$ but not $\mq$. Then $\psi_{\sim}\equiv \psi_{\sim}(x)$ on  
on $\ml\setminus (\ml\cap S_1)$.
Since $S_1$ is not an affine plane,
there is an $x'\in \Pi\setminus (S_1\cup \ml(x,\mq))$.
We have $\psi_{\sim}(x)=\psi_{\sim}(x')$. The union of
lines  $\ml \subset \Pi,\mq\notin \ml$,
through $x$, $x'$, is  equal to
$\Pi\setminus \mq$.
Thus $\psi_{\sim}$ takes only three values $\{ 1,\psi(x),\psi(\mq)\}$
and is constant on $\Pi\setminus (S_1\cup \mq)$.
Lemma~\ref{lemm:DE}, applied to $\ml$ through $\mq$, implies that
$\psi$ is constant on $\ml\setminus \mq$.
\end{proof}

We are left with Case (D), when $\Psi$
is induced via some
$$
\rho: \Pi=\P^2(\Q)\to \P^2(\Z/p)
$$
from a trivial coloring 
$$
\mathfrak  c: \P^2(\Z/p)\to \{1,2,3\},
$$
in the sense of Proposition~\ref{prop:plane}.
Put
$$
\bar S_i =\mathfrak c^{-1}(i), \quad i=1,2,3.
$$ 
Note that $S_1$ is a  finite union of subsets
$\Z_{(p)}+\Z_{(p)}$ and does not contain a complete line $\ml$. 
Consider shifts $\Pi_z:=z^{-1}\cdot \Pi$, for $z\in \Pi$.

\begin{lemm}
\label{lemm:piz-rest}
For every $z\in \Pi$, the restriction of $\psi_{\sim}$ to $\Pi_z$ is induced from 
$\P^2(\Z/p)$. 
\end{lemm}

\begin{proof}
We subdivide (D) into subcases:
\begin{enumerate}
\item[(D1)] 
For every $z$ and every splitting $\Pi_z=S_1\sqcup S_2\sqcup S_3$, 
where $S_2, S_3$ are unions of algebraic dependency classes, the 
set $\bar S_1\subset \P^2(\Z/p)$ is either a point, 
an affine line, or an affine plane.
\item[(D2)] 
Otherwise: for some $\Pi_z$ this is not the case. 
\end{enumerate}
First we treat (D1). 
Fix $\Pi=\Pi_z$ and a decomposition 
$\Pi=S_1\sqcup S_2\sqcup S_3$; we have 
$$
\bar\Psi: \P^2(\Z/p)\to \{ 1,2,3\},
$$
and
$$
\P^2(\Z/p)=\sqcup_{i=1}^3 \bar{S}_i,  \quad S_i=\rho^{-1}(\bar{S}_i) \subset \Pi. 
$$ 
By assumption (D1), we have 3 cases.
\begin{itemize}
\item $\bar{S}_1=\bar\mq$, for some $\bar\mq\in \P^2(\Z/p)$. 
For $x\in \Pi\setminus S_1$ and 
$\ml= \ml(\mq,x)$, with $\rho(\mq)=\bar\mq$, 
$\psi_{\sim}$ is constant on $\ml\setminus (\ml\cap S_1)$, by (TC). 
Apply this to all $\ml(\mq,x_1)$, where
$x_1$ runs over $S_1$, to conclude 
that $\psi_{\sim}$ is constant on preimages of affine lines
$(\bar \ml\setminus \bar\mq)$, with $\bar\mq\in \bar\ml$,
hence is induced from $\P^2(\Z/p)$.
\item 
$\bar{S}_1=\bar\ml\setminus \bar\mq$, for some 
$\bar\ml\subset \P^2(\Z/p)$ and $\bar\mq\in \bar\ml$.
Then $\bar{S}_1, \bar{S}_2$ and $\bar{S}_3$ form a flag on $\P^2(\Z/p)$: all points projecting to $\P^2(\Z/p)\setminus 
\bar{\ml}$ belong to the same algebraic dependency class because each pair of such points can be connected by 
a pair of lines which intersect $S_1$. 
Lemma~\ref{lemm:eight}
reduces the proof to the previous case, 
after changing to a different $\psi_{\sim}$-compatible lattice. 
\item 
$\bar{S}_1=\P^2(\Z/p)\setminus \bar\ml$, for some line $\bar\ml\in \P^2(\Z/p)$.
This reduces to the case $\bar{S}_1=\bar\mq$. 
\end{itemize}

We pass to (D2) and fix a plane $\Pi=\Pi_z$, with  
a splitting 
$$
\Pi=S_1\sqcup S_2\sqcup S_3,
$$
violating (D1). Then there exist a point $\bar\mq\in \P^2(\Z/p)$ 
and a set $\{\bar\ml_i\}_{i\in I}$ of at least two lines passing through $\bar\mq$ such that
$\bar{S}_1=\cup_{i\in I}  (\bar\ml_i\setminus \bar\mq)$. 
Moreover, we may assume that $\bar{S}_2=\bar\mq$, 
then $\bar{S}_3$ has the same structure as $\bar{S}_1$, 
i.e., a union of affine lines containing $\bar\mq$ in their closure.  

We claim that $\psi_{\sim}$ is constant on $S_3$: consider $\bar\mq_3,\bar\mq_3'\in \bar{S}_3$ not lying 
on a line through $\bar\mq$. Let $\mq_3, \mq_3'$ be any points projecting to $\bar\mq_3,\bar\mq_3'$. 
Since $\bar\ml(\bar\mq_3,\bar\mq_3')\cap \bar{S}_1\neq \emptyset$,  
the line $\ml(\mq_3,\bar\mq_3')$ intersects $S_1$, thus $\mq_3\sim \mq_3'$. 
By assumption on $\bar{S}_3$, any 
two points  in $S_3$ can be connected by a chain of such lines.

Note  that $\psi_{\sim}$ is constant on $S_2$: consider 
$$
\mq_1,\mq_2\quad \text{ with }\quad \rho(\mq_1)=\rho(\mq_2)=\bar\mq\in \bar{S}_2.
$$ 
Then $\psi(\mq_1)=\psi(\mq_2)$.
Indeed, consider $\ml_5= \ml(\mq_1, x_1)$ and $\ml_6=\ml(\mq_2,x_2)$, 
where $\rho(x_i)=\bar x_i \in \bar S_1, \bar x_1\neq \bar x_2$.
Hence $\mq_3:=\ml_5\cap \ml_6$ projects to $\bar\mq$.
Thus $\psi(\mq_1)=\psi(\mq_3)= \psi_{\sim}(\mq_2)$.
Thus $\psi_{\sim}$ is constant on $S_2$,
hence $\psi_{\sim}$ is induced from $\P^2(\Z/p)$.
\end{proof}

\begin{lemm}
\label{lemm:anew}
The map $\psi_{\Pi}$ is induced from $\bar{\psi}_{\Pi}:\P^2(\Z/p)\ra A$
which is of the type (a), (b), or (c). 
\end{lemm}

\begin{proof}
By Lemma~\ref{lemm:piz-rest},
we have the following possibilities:
\begin{enumerate}
\item
$\psi_{\sim}$ is induced from  a flag map on $\P^2(\Z/p)$
and we can assume that $\bar S_1=\bar\mq$, by Lemma~\ref{lemm:eight};
\item
$\psi_{\sim}$ is induced from a map on
$\P^2(\Z/p)$ which is constant on affine lines $\bar\ml_i\setminus \bar\mq$, with $\bar\mq\in \bar\ml$, 
and $\bar S_1=\bar\mq$;
\item
$\psi_{\sim}$ is induced from a map on
$\P^2(\Z/p)$ which is constant on affine lines
$\bar \ml_i\setminus \bar\mq$, with $\bar\mq\in \bar\ml$, 
and $\bar S_1$ contains $\bar \ml_i\setminus \bar\mq,i=1,2$.
\end{enumerate}

\

{\it Case (1)}: 
We may assume that $\bar S_3=\P^2(\Z/p)\setminus \bar\ml$, for some $\bar\ml$ 
with $\bar\mq\in\bar\ml$, and $\bar\ml\setminus\bar\mq =\bar{S}_2$. 
Let  $\ml$ be disjoint from $S_1$ and $\mq,\mq'\in \ml\cap S_3$.  
Since $\psi_{\sim}(\mq)=\psi_{\sim}(\mq')$
and $\ml$ intersects $S_2$, $\psi(\mq)=\psi(\mq')$, 
by Lemma~\ref{lemm:one}. 
Since any two points in $S_3$ 
can be connected by a chain
of lines disjoint from $S_1$, $\psi$ is constant on $S_3$. It is also 
constant on $\rho^{-1}(\bar\mq_2)$, for $\bar\mq_2\in \bar S_2$.
Indeed, if $\mq_2,\mq_2'$ are distinct points projecting to $\bar\mq_2$
and $\ml, \ml'$ lines containing $\mq_2$, resp. $\mq_2'$, 
avoiding $S_1$ and projecting to distinct lines in $\P^2(\Z/p)$,
then $\mq_2'':=\ml\cap \ml'$ also projects to $\bar\mq_2$.
Thus $\psi(\mq_2)=\psi(\mq_2')=\psi(\mq_2'')$.

\

{\it Case (2)}:  $\bar S_1= \bar\mq$.
If $\psi$ is induced from a noninjective 
$\bar\psi:\P^2(\Z/p)\ra A$, $\psi$ is constant on the preimage
of every affine line $\bar \ml \setminus \bar \mq$, by 
the same analysis over a finite field.

If there exist $y_1,y_2$, projecting
to the same points $\bar x \in \bar \ml\setminus \bar\mq$,
with $\psi(y_1)\neq \psi(y_2)$, let $z_1,z_2$ have
$\psi_{\sim}(z_i)\neq \psi_{\sim}(y_i)$. Consider 
$$
z:=\ml(y_1,z_1)\cap \ml(y_2,z_2),
$$ 
so that $\rho(z)=\bar{x}$.  
Then $\psi(y_1) = \psi(z)=\psi(y_1)$, by Lemma~\ref{lemm:DE}.
Since all points over $\bar x$ are connected by a chain
of lines of such type,
$\psi$ is constant on $\rho^{-1}(\bar x)$.

\

{\it Case (3)}: The argument of Case (1) shows that
$\psi$ is constant on the preimage of any affine line  
$\bar \ml \setminus \bar \mq$ contained in $\bar S_3$. 
Indeed, let $z_1,z_2 \in S_3$ and consider $\ml:=\ml(z_1,z_2)$.
It intersects $S_2$ and hence $\psi(z_1)=\psi(z_2)$.
Thus $\psi$ is induced from $\P^2(\Z/p)\setminus \mq =\bar S_2$.
Let $\mq,\mq'$, projecting $\bar\mq$. Consider lines
$\ml(\mq,z_1)$ and $\ml(q',z_2)$ with $z_i\in S_3$, which intersect
in $\mq'',\rho(\mq'')=\bar\mq$.
Then  $\psi(\mq) = \psi(\mq'')=\psi(\mq')$, by Lemma~\ref{lemm:DE}.
Since any pair of points over $\bar q$ can be connected
by a chain of such lines, $\psi$ is constant on $\rho^{-1}(\bar\mq)$.
\end{proof}

This concludes the proof of Proposition~\ref{prop:17}. \end{proof}

%%%%%%%%%%%
%%%%%%%%%%%%%%%%%%%

\begin{rema} 
This Lemma is similar to \cite{HalesStraus} 
and \cite[Lemma 13]{GTPG}. 
\end{rema}

\section{Lines of injectivity}
\label{sect:new-lines}

In our analysis of the restriction $\psi_{\ml}$ of 
$$
\psi: \P(K)\ra A=L^\times/\tilde{l}^\times
$$
to lines $\ml=\ml(1,x)\subset \P(K)$ we distinguish the following
possibilities:
\begin{itemize}
\item $\psi_{\ml}$ is not induced from a map $\bar{\psi}_{\ml}:\P^1(\Z/p)\ra A$
and $\psi_{\ml}$ is:
\begin{itemize}
\item[(I)] injective
\item[(N)] not injective and nonflag
\item[(F)] a nonconstant flag map
\end{itemize}
\item 
$\psi_{\ml}$ is induced from $\bar{\psi}_{\ml}:\P^1(\Z/p)\ra A$ and $\bar{\psi}_{\ml}$ is
\begin{itemize}
\item[($\bar{\mathrm I}$)] injective
\item[($\bar{\mathrm N}$)] not injective and nonflag
\item[($\bar{\mathrm F}$)] a nonconstant flag map
\end{itemize}
\item (C) $\psi_{\ml}$ is constant
\end{itemize}

\begin{defn}
\label{defn:u}
Let
$
\mathfrak u\subset \P(K)
$ 
be the union of all lines through $1$, on which $\psi$ is injective and put
$$
\mathfrak U:=\{xy~|~x,y\in\mathfrak u\}\subseteq \P(K).
$$
\end{defn}

\begin{lemm} 
\label{three-on-u} 
If $\psi(\mathfrak u)$ contains at least two 
algebraically independent elements, then $\mathfrak U$ is a group. 
\end{lemm}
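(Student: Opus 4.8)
The plan is to show that the set $\mathfrak U = \{xy \mid x,y \in \mathfrak u\}$ is closed under multiplication and inverses. Since $\mathfrak u$ is a union of lines through $1$, it is closed under inversion ($x \mapsto x^{-1}$ maps $\ml(1,x)$ to itself), and contains $1$; hence $\mathfrak U$ is automatically symmetric and contains $1$. The only real issue is closure under products: given $x_1,y_1,x_2,y_2 \in \mathfrak u$, I must exhibit $x_1 y_1 x_2 y_2 \in \mathfrak U$, i.e., write it as a product of two elements of $\mathfrak u$. Equivalently, it suffices to prove that for $x \in \mathfrak U$ and $y \in \mathfrak u$ one has $xy \in \mathfrak U$; iterating gives the general statement. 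So the heart is: if $z \in \mathfrak u$ and $x \in \mathfrak u$, is $z x \in \mathfrak u$, or at least is the product of two elements of $\mathfrak u$ representable again as such a product after multiplying by another element of $\mathfrak u$?

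The key step is a transitivity/exchange lemma for lines of injectivity, powered by the hypothesis that $\psi(\mathfrak u)$ contains two algebraically independent elements together with Proposition~\ref{prop:17}. First I would fix $x,y \in \mathfrak u$ with $\psi(x) \not\sim \psi(y)$ (possible by hypothesis, after replacing $x,y$ by suitable elements of $\mathfrak u$, since elements of $\mathfrak u$ realizing algebraically independent values lie on lines through $1$ that are injective). Consider the plane $\Pi = \Pi(1,x,y)$. Since $\psi(x) \not\sim \psi(y)$, Proposition~\ref{prop:17} applies to $\Pi$; the options (b), (c), (d) all force $\psi_\Pi$ to collapse algebraic independence or to be constant on large affine subsets, which would contradict $\psi_{\ml(1,x)}$ and $\psi_{\ml(1,y)}$ being injective together with $\psi(x)\not\sim\psi(y)$ — more precisely, in cases (b) and (c) the generic fibers of $\psi_\Pi$ force many collinear points to the same value, killing injectivity along some line through $1$ unless the line $\ml$ or point $\mq$ is positioned very specially, and case (d) cannot occur because an injective line cannot be induced from $\P^1(\Z/p)$. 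So $\psi_\Pi$ is \emph{injective}. Consequently every line in $\Pi$ through $1$ — in particular $\ml(1,xy)$, $\ml(1, xy^{-1})$, $\ml(1, x^{-1}y)$, and so on — is a line of injectivity, so $xy, x^{-1}y \in \mathfrak u$. This shows $\mathfrak u \cdot \mathfrak u \cap \Pi \subseteq \mathfrak u$ whenever the two generators have independent images.

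Next I would bootstrap: given arbitrary $x_1,y_1 \in \mathfrak u$, if $\psi(x_1) \sim \psi(y_1)$, use the independence hypothesis to pick $w \in \mathfrak u$ with $\psi(w) \not\sim \psi(x_1)$; then by the previous paragraph applied to the planes $\Pi(1,x_1,w)$ and $\Pi(1,w,y_1)$ (both with independent generators after checking $\psi(w)\not\sim\psi(y_1)$ too, adjusting $w$ if necessary) one gets $x_1 w^{-1} \in \mathfrak u$ and $w y_1 \in \mathfrak u$; and since $\psi(x_1 w^{-1}) \not\sim \psi(w y_1)$ — this needs property (AI) from Section~\ref{sect:fields} — the plane $\Pi(1, x_1 w^{-1}, w y_1)$ again has $\psi_\Pi$ injective, giving $(x_1 w^{-1})(w y_1) = x_1 y_1 \in \mathfrak u$. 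Therefore $\mathfrak u \cdot \mathfrak u \subseteq \mathfrak u$, and a fortiori $\mathfrak U = \mathfrak u \cdot \mathfrak u = \mathfrak u$ is closed under products; combined with symmetry and $1 \in \mathfrak U$, it is a subgroup of $\P(K) = K^\times/k^\times$.

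The main obstacle I anticipate is the bookkeeping of algebraic dependence relations when choosing the auxiliary element $w$: I need $w \in \mathfrak u$ simultaneously independent from $x_1$, from $y_1$, and such that the derived elements $x_1 w^{-1}$, $w y_1$ stay independent, and each such independence claim must be extracted from property (AI) and the fact that $\psi(\mathfrak u)$ contains two independent elements — which a priori only guarantees \emph{one} independent pair, so one has to argue that the group generated by independent-image elements of $\mathfrak u$ is "large enough" to supply generic choices. A secondary subtlety is ruling out case (d) of Proposition~\ref{prop:17} cleanly: one must confirm that an injective line $\ml(1,x)$ inside $\Pi$ prevents $\psi_\Pi$ from being induced via a $\Z_{(p)}$-lattice, since the induced map would factor through the finite set $\P^1(\Z/p)$ on that line. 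Both are genuine but not deep; the argument is essentially a projective-geometry exchange argument once Proposition~\ref{prop:17} is in hand.
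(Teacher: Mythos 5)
Your argument breaks precisely at the step where you pass from injectivity of $\psi$ on the plane $\Pi(1,x,y)$ to membership of \emph{products} in $\mathfrak u$. The plane $\Pi(1,x,y)$ is the projectivization of the $k$-linear span of $1,x,y$; the products $xy$ and $x^{-1}y$ do not lie in this span in general, so $\ml(1,xy)$ and $\ml(1,x^{-1}y)$ are not lines of $\Pi$, and injectivity of $\psi_\Pi$ says nothing about them. What plane injectivity does give is only the \emph{ratio} property: $\ml(x,y)=y\cdot\ml(1,xy^{-1})$, and since $\psi$ is a homomorphism, injectivity on $\ml(x,y)$ transfers to $\ml(1,xy^{-1})$, so $xy^{-1}\in\mathfrak u$ --- this is the paper's statement \eqref{u-ratio}, and it is the only multiplicative closure extractable this way. (A minor slip of the same kind: inversion does not map $\ml(1,x)$ to itself; the correct reason $\mathfrak u$ is closed under inverses is that $\ml(1,x^{-1})=x^{-1}\cdot\ml(1,x)$ and multiplication by $\psi(x)^{-1}$ is injective on $A$.)

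Because of this, your stronger conclusion $\mathfrak u\cdot\mathfrak u\subseteq\mathfrak u$, i.e.\ $\mathfrak U=\mathfrak u$, is not merely unproven but false in general: if $x,y\in\mathfrak u$ satisfy $\psi(y)=\psi(x)^{-1}$ with $xy\neq 1$, then $\psi(xy)=1=\psi(1)$, so $\psi$ is not injective on $\ml(1,xy)$ and $xy\in\mathfrak U\setminus\mathfrak u$; such elements are exactly why the paper distinguishes $\mathfrak U$ from $\mathfrak u$ (cf.\ Lemma~\ref{lemm:24}, where $\ml\cap\mathfrak U$ can be an affine line although $\ml\not\subseteq\mathfrak u$). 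Consequently your bootstrap with the auxiliary $w$ only rewrites a product of two elements of $\mathfrak u$ as another such product, and it never settles the step the lemma actually needs --- your own reduction calls for $\mathfrak U\cdot\mathfrak u\subseteq\mathfrak U$, i.e.\ that a product of \emph{three} elements of $\mathfrak u$ is again a product of two --- and it cannot handle the case where the relevant $\psi$-values multiply to $1$. The paper closes exactly this gap: for $x_1,x_2,x_3\in\mathfrak u\setminus\{1\}$ not all $\psi(x_ix_j)$ can equal $1$ (otherwise $\psi(x_1)^2=1$, impossible since $L^\times/\tilde l^\times$ is torsion free, as $\tilde l=\tilde l_a$), and then \eqref{u-ratio}, applied to $x_i$ and $x_j^{-1}$, gives $x_ix_j\in\mathfrak u$, whence $x_1x_2x_3=(x_ix_j)x_t\in\mathfrak U$. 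Your proposal is missing both the torsion-freeness observation and any treatment of the $\psi$-value-$1$ configuration; the rest (excluding case (d) of Proposition~\ref{prop:17}, choosing $w$ via transitivity of $\sim$ on nonconstant classes) is fine but does not repair this.
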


\begin{proof}
Clearly, $\mathfrak u$ and $\mathfrak U$
contain $1\in K^{\times}/k^{\times}$.
If $x\in \mathfrak U$ then $x^{-1}\in \mathfrak U$, by 
injectivity of $\psi$ on $\ml(1,x^{-1})$. 
Furthermore, 
\begin{equation} 
\label{u-ratio} \text{$xy^{-1}\in\mathfrak u$, for all 
$x,y\in\mathfrak u$ such that $\psi(x)\neq\psi(y)$.}
\end{equation} 
Indeed, if $\psi(x)\not\sim\psi(y)$ then
$\psi$ is injective on $\Pi(1,x,y)$, 
by Proposition~\ref{prop:17}, and in particular on  
$\ml(x,y)=y\cdot\ml(1,xy^{-1})$;
thus, $xy^{-1}\in\mathfrak u$. 

If $\psi(x)\sim\psi(y)$, but are not equal in $A$,
take $z\in\mathfrak u$ such that $\psi(x)\not\sim\psi(z)$. 
Then $x/z,y/z\in\mathfrak u$, as above. Since 
$\psi(x/z)\not\sim\psi(y/z)$, the same argument shows 
that $(x/z)/(y/z)=xy^{-1}\in\mathfrak u$, proving \eqref{u-ratio}.

To show that $\mathfrak U$ is multiplicatively closed it
suffices to check that for every 
$x_1,x_2,x_3\in\mathfrak u\setminus\{1\}$ there exist  
$s_1,s_2\in\mathfrak u$ with $x_1x_2x_3=s_1s_2$. 
Note that $\psi(x_ix_j)\neq 1$ for some $1\le i<j\le 3$. (Otherwise, 
$$
\psi(x_1x_2)=\psi(x_1x_3)=\psi(x_2x_3)=1,
$$
and therefore, 
$$
\psi(x_1^2)=\psi((x_1x_2)(x_1x_3)/(x_2x_3))=1,
$$
so $\psi(x_1)=1$.) 
Then, by (\ref{u-ratio}), $x_ix_j\in\mathfrak u$, so we can 
take $s_1:=x_ix_j$ and $s_2:=x_t$, where $\{i,j,t\}=\{1,2,3\}$. 
\end{proof}

\begin{defn}
\label{defn:baru}
Let $\bar{\mathfrak u}\subset \P(K)$ 
be the union of all lines $\ml$ through 1, such
that the restriction of $\psi$ to $\ml$ is induced via 
an injective map
$$
\bar{\psi}_{\ml}:\P^1(\Z/p)\ra A, 
$$ 
and put
$$
\bar{\mathfrak U}:=\{ xy\, \mid\, x,y\in \bar{\mathfrak u}\}\subseteq \P(K).
$$
\end{defn}

\begin{lemm}
\label{lemm:up}  
If $\psi(\bar{\mathfrak u})$ contains at least
two algebraically independent elements, 
then $\bar{\mathfrak U}$ is group.
\end{lemm}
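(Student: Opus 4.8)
The plan is to mimic the proof of Lemma~\ref{three-on-u}, replacing the use of Proposition~\ref{prop:17}(a) (injectivity on a plane) with its ``mod $p$'' counterpart, namely case (d) of that proposition together with the results of Section~\ref{sect:zp}. First I would record the elementary structural facts: $1\in\bar{\mathfrak u}\subseteq\bar{\mathfrak U}$, and $\bar{\mathfrak U}$ is closed under inversion, since if $\psi_{\ml(1,x)}$ is induced from an injective $\bar\psi:\P^1(\Z/p)\ra A$, then so is $\psi_{\ml(1,x^{-1})}$, because $x\mapsto x^{-1}$ is a projective automorphism of $\ml(1,x)$ carrying the inducing lattice to a compatible lattice. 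As in the previous lemma, the crux is the analogue of \eqref{u-ratio}: for $x,y\in\bar{\mathfrak u}$ with $\psi(x)\neq\psi(y)$ one must show $xy^{-1}\in\bar{\mathfrak u}$.

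To prove that analogue I would split on whether $\psi(x)\sim\psi(y)$. When $\psi(x)\not\sim\psi(y)$, the plane $\Pi(1,x,y)$ falls under Proposition~\ref{prop:17}, and the hypothesis that $\psi(\bar{\mathfrak u})$ has two algebraically independent elements should be used to exclude cases (b) and (c) (those collapse $\psi$ to something with too few dependency classes on lines through $1$, contradicting that $\ml(1,x)$ and $\ml(1,y)$ carry induced injective maps). Case (a) cannot literally occur if $\ml(1,x)$ is genuinely of type $(\bar{\mathrm I})$ rather than $(\mathrm I)$, so we land in case (d): $\psi_\Pi$ is induced via a lattice $B\subset\Q^3$ from $\bar\psi_\Pi:\P^2(\Z/p)\ra A$ of type (a), (b), or (c). Here I would invoke Lemma~\ref{indunique -from line} and Lemma~\ref{lemm:eight}/Corollary~\ref{coro:point}: the restrictions of $\psi$ to $\ml(1,x)$ and $\ml(1,y)$ are each induced from injective maps on $\P^1(\Z/p)$, and by the uniqueness of compatible lattices (case (2) of the dichotomy following Corollary~\ref{coro:point}), these must be restrictions of a single lattice $B$ on $\Pi$; hence $\bar\psi_\Pi$ is of type (a), i.e.\ injective, and therefore its restriction to the line $\bar\ml$ corresponding to $\ml(x,y)=y\cdot\ml(1,xy^{-1})$ is an injective map $\P^1(\Z/p)\ra A$ inducing $\psi$ on that line; shifting by $y^{-1}$ gives $xy^{-1}\in\bar{\mathfrak u}$. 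When $\psi(x)\sim\psi(y)$ but $\psi(x)\neq\psi(y)$ in $A$, I would choose $z\in\bar{\mathfrak u}$ with $\psi(x)\not\sim\psi(z)$ (possible by the two-independent-elements hypothesis), apply the previous case to get $x/z,y/z\in\bar{\mathfrak u}$, note $\psi(x/z)\not\sim\psi(y/z)$ by property (AI), and apply the previous case again to $(x/z)/(y/z)=xy^{-1}$.

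Finally, multiplicative closure of $\bar{\mathfrak U}$ follows exactly as in Lemma~\ref{three-on-u}: for $x_1,x_2,x_3\in\bar{\mathfrak u}\setminus\{1\}$, some $\psi(x_ix_j)\neq1$ (else $\psi(x_1)^2=1$ and, by the mod $p$ analysis, $\psi(x_1)=1$), so by the established analogue of \eqref{u-ratio} we have $x_ix_j\in\bar{\mathfrak u}$, and then $x_1x_2x_3=(x_ix_j)\cdot x_t$ exhibits it as a product of two elements of $\bar{\mathfrak u}$.

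I expect the main obstacle to be the middle step: carefully pinning down that the two compatible lattices on the lines $\ml(1,x)$ and $\ml(1,y)$ extend to one compatible lattice on the plane $\Pi(1,x,y)$, and that type (d) with $\bar\psi_\Pi$ of type (b) or (c) is incompatible with both lines being of type $(\bar{\mathrm I})$. This is where Lemma~\ref{lemm:drr}, Lemma~\ref{lemm:aff-proj}, Lemma~\ref{indunique -from line}, Lemma~\ref{lemm:eight} and Corollary~\ref{coro:point} all have to be assembled correctly; the rest is a routine transcription of the proof of Lemma~\ref{three-on-u}.
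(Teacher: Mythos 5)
Your overall route is the paper's own: the paper proves this lemma simply by declaring that it follows the same steps as Lemma~\ref{three-on-u}, and your plan is exactly that transcription, with Proposition~\ref{prop:17}(d) replacing injectivity on planes. However, two of your steps do not survive the passage from type $(\mathrm I)$ to type $(\bar{\mathrm I})$ as written. First, in the key middle step the inference ``the compatible lattices on $\ml(1,x)$ and $\ml(1,y)$ glue to a single lattice on $\Pi(1,x,y)$, hence $\bar\psi_\Pi$ is of type (a)'' is a non sequitur: uniqueness of compatible lattices says nothing about whether the inducing map on $\P^2(\Z/p)$ could be of type (b) or (c), nor does it dispose of cases (b) and (c) of Proposition~\ref{prop:17} itself, which you dismiss in a vague parenthesis. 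What actually eliminates these cases is a value count combined with preservation of algebraic dependence: a type $(\bar{\mathrm I})$ line carries $p+1\ge 3$ distinct values, which kills case (b) and the subcases of (c)/(d) in which the center or exceptional line meets $\ml(1,x)$ or $\ml(1,y)$; in the remaining subcases the perspectivity from the center (or the coincidence of the two reductions with the exceptional line mod $p$) forces the value sets of $\psi$ on $\ml(1,x)$ and $\ml(1,y)$ to coincide, and since all values on a line through $1$ are algebraically dependent on the value at any of its points, this yields $\psi(x)\sim\psi(y)$, contradicting the hypothesis. You flag this step as the obstacle, but the tools you propose for it (the lattice lemmas of Section~\ref{sect:zp}) are not the ones that close it.

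Second, the final step is not a ``routine transcription''. In Lemma~\ref{three-on-u} the absurdity of $\psi(x_1)=1$ comes from injectivity of $\psi$ on $\ml(1,x_1)$; but a line of type $(\bar{\mathrm I})$ has an infinite fiber over the reduction of $1$, so $\bar{\mathfrak u}$ contains many $x\neq 1$ with $\psi(x)=1$. Consequently ``some $\psi(x_ix_j)\neq 1$'' can genuinely fail (take all three $x_i$ in $\ker\psi\cap\bar{\mathfrak u}$), and for such elements the second branch of your analogue of \eqref{u-ratio} also collapses: there is no $z$ with $\psi(z)\not\sim\psi(x)$ when $\psi(x)=1$, because $1$ is dependent on everything. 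So both the ratio property and the multiplicative closure need a separate treatment of kernel elements of $\bar{\mathfrak u}$ --- for instance, re-spanning the plane through a point $w\in\ml(1,x)$ whose value is nontrivial and independent of the relevant dependency class, which exists because an induced injective map takes $p$ nontrivial values on that line --- and this is absent from your write-up. (A minor point: $\psi(x_1)^2=1\Rightarrow\psi(x_1)=1$ follows from torsion-freeness of $L^\times/\tilde l^\times$, using $\tilde l=\tilde l_a$, not from ``the mod $p$ analysis''.)
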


\begin{proof}
The proof follows the same steps as the proof of 
Lemma~\ref{three-on-u}.
\end{proof}

\begin{lemm}
\label{lemm:restriction}
Assume  $\P(K)$
contains lines of type $(\mathrm I)$ and one of the types 
\begin{equation}
\label{eqn:types}
(\mathrm N), (\bar{\mathrm I}), (\bar{\mathrm N}), \quad or  \quad (\bar{\mathrm F}).
\end{equation}
Then there exists a one-dimensional 
subfield $E\subset L$ such that for all lines $\ml\subset \P(K)$ of 
type $(\mathrm I)$, $(\mathrm N)$, $(\bar{\mathrm I})$, 
$(\bar{\mathrm N})$, or $(\bar{\mathrm F})$ we have
$$
\psi(\ml)\subset E^\times/\tilde{l}^\times.
$$
In particular, if $\psi(\mathfrak u)$ 
contains algebraically independent elements, 
lines of type  $(\mathrm N)$, $(\bar{\mathrm I})$, $(\bar{\mathrm N})$, and $(\bar{\mathrm F})$
do not exist. 
\end{lemm}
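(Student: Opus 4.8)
The plan is to show first that whenever $\ml=\ml(1,x)$ is a line of type $(\mathrm I)$ (so $\psi$ is injective on $\ml$ and not induced from $\P^1(\Z/p)$), the image $\psi(\ml)$ lies inside a fixed one-dimensional subfield; then to propagate this to all lines in the list. The key geometric input is Lemma~\ref{three-on-u}: if $\psi(\mathfrak u)$ contains two algebraically independent elements then $\mathfrak U$ is a group. I would begin by fixing a reference line $\ml_0=\ml(1,x_0)$ of type $(\mathrm I)$ and letting $E\subset L$ be the one-dimensional field (over $\tilde l$) generated by $\psi(x_0)$; since $\psi_{\ml_0}$ is injective, $\psi(\ml_0)$ is an infinite subset of $E^\times/\tilde l^\times$ contained in a single algebraic-dependency class. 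The goal is to show every line $\ml$ of one of the five types has $\psi(\ml)$ algebraically dependent on $\psi(x_0)$, hence inside $E^\times/\tilde l^\times$.

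**Main steps.** First I would treat another line $\ml_1=\ml(1,x_1)$ of type $(\mathrm I)$. If $\psi(x_1)\not\sim\psi(x_0)$, then by Proposition~\ref{prop:17} $\psi$ is injective on the whole plane $\Pi(1,x_0,x_1)$; but this plane contains lines — e.g. $\ml(x_0,x_1)$ and its translates — whose $\psi$-images would then consist of infinitely many pairwise algebraically independent elements, and one checks this is incompatible with the assumed existence of a line of type $(\mathrm N)$, $(\bar{\mathrm I})$, $(\bar{\mathrm N})$, or $(\bar{\mathrm F})$: such a line, intersected with the plane appropriately (after a suitable translation bringing a point of it into the plane, or by considering the subgroup generated), forces a finiteness or flag-type constraint that an injective plane map cannot satisfy. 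This is exactly where the hypothesis~\eqref{eqn:types} is used — it rules out case~(a) of Proposition~\ref{prop:17} globally and pins all type-$(\mathrm I)$ lines into a single dependency class. Concretely, I expect to argue: if two type-$(\mathrm I)$ lines had algebraically independent images, then $\psi(\mathfrak u)$ contains independent elements, so $\mathfrak U$ is a group by Lemma~\ref{three-on-u}, and one analyzes $\mathfrak U\cap\ml'$ for $\ml'$ a line of the "bad" type to derive a contradiction with its structure (a non-induced nonflag, or an induced map). Once all type-$(\mathrm I)$ lines have images in the one class of $\psi(x_0)$, the subfield $E$ they generate is one-dimensional: it is generated over $\tilde l$ by an infinite family of pairwise dependent elements, hence has transcendence degree $\le 1$.

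**Extending to the other types.** For a line $\ml$ of type $(\mathrm N)$, $(\bar{\mathrm I})$, $(\bar{\mathrm N})$, or $(\bar{\mathrm F})$, I would pick $x\in\ml$, form the plane $\Pi(1,x,x_0)$ (or $\Pi(1,x,y)$ with $y$ on a type-$(\mathrm I)$ line), and apply Proposition~\ref{prop:17}: if $\psi(x)\not\sim\psi(x_0)$ the plane is injective, contradicting that $\psi_{\ml}$ is non-injective or induced; hence $\psi(x)\sim\psi(x_0)$, so $\psi(x)\in E^\times/\tilde l^\times$. Running over all $x\in\ml$ gives $\psi(\ml)\subset E^\times/\tilde l^\times$. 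The final assertion — that under $\psi(\mathfrak u)$ containing independent elements no lines of type $(\mathrm N)$, $(\bar{\mathrm I})$, $(\bar{\mathrm N})$, $(\bar{\mathrm F})$ exist — then follows: such a line would have $\psi(\ml)\subset E^\times/\tilde l^\times$, so all its values are algebraically dependent on $\psi(x_0)$; but $\mathfrak U$ is a group (Lemma~\ref{three-on-u}) properly containing the type-$(\mathrm I)$ lines and not contained in $E$ (it has independent images only via the freedom outside $\ml$), and intersecting $\mathfrak U$ with this line contradicts the one-dimensional confinement together with the non-induced-nonflag / induced structure of $\psi_{\ml}$.

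**Main obstacle.** The delicate point is the plane-injectivity dichotomy of Proposition~\ref{prop:17} interacting with the structure of a "bad" line: I must rule out that an injective plane $\Pi$ could simultaneously contain a line on which $\psi$ is non-injective or $\P^1(\Z/p)$-induced. This is immediate for a line through $1$ lying inside $\Pi$, but a general bad line need not lie in the chosen plane, so the real work is a translation argument — replacing $\ml$ by $z^{-1}\ml$ for suitable $z\in\mathfrak u$ to place a point of it in a controlled plane, as in the proof of Lemma~\ref{three-on-u} — combined with the precise classification of how $\mathfrak U$ (a group, hence a sub-$\F_p$- or $\Z_{(p)}$-structure of $\P(K)$) can meet a line. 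Making that bookkeeping uniform across the four "bad" types, and across $k=\F_p$ versus $k=\Q$, is where the argument requires care.
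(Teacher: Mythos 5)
Your overall skeleton is the paper's: place the ``bad'' line and a type $(\mathrm I)$ line with algebraically independent $\psi$-images in a common plane $\Pi(1,x,y)$, invoke Proposition~\ref{prop:17}, deduce $\psi(x)\sim\psi(y)$, and hence confine all listed lines to a one-dimensional subfield $E$, with the ``in particular'' clause coming from the same plane argument. But the central step, as you state it, has a genuine gap: you claim that if $\psi(x)\not\sim\psi(x_0)$ then ``the plane is injective,'' i.e.\ that only case (a) of Proposition~\ref{prop:17} can occur, and you derive the contradiction solely from that. This is false: the presence of a single type $(\mathrm I)$ line does not eliminate cases (b), (c), (d). In case (b) the exceptional line can be $\ml(1,x_0)$ itself ($\psi$ injective on it, constant on the complement is perfectly compatible with the dichotomy), and in case (c) the vertex $\mq$ may avoid $\ml(1,x_0)$, in which case $\psi$ on $\ml(1,x_0)$ is the pullback of an injective function on the pencil through $\mq$ --- again compatible. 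The actual work of the lemma, which your proposal omits, is precisely the case-by-case incompatibility: in (b) the exceptional line must be $\ml(1,x_0)$, so every other line of $\Pi$ is of type $(\mathrm C)$ or $(\mathrm F)$, leaving no room for $\ml(1,x)$; in (c) lines through $\mq$ are $(\mathrm C)$/$(\mathrm F)$, while lines missing $\mq$ are injective (the value function on the pencil is injective because it is so on $\ml(1,x_0)$), and an injective restriction can be none of $(\mathrm N),(\bar{\mathrm I}),(\bar{\mathrm N}),(\bar{\mathrm F})$, since a map induced from $\P^1(\Z/p)$ is constant on the infinite fibers of the reduction and hence never injective; and (d) is impossible outright because $\ml(1,x_0)$, being of type $(\mathrm I)$, is not induced.

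Relatedly, you located the difficulty in the wrong place. Ruling out a bad line inside an injective plane (your ``main obstacle'') is immediate, by the same remark that induced maps are never injective; and no translation bookkeeping is needed, since the types are set up for lines $\ml(1,x)$ through $1$, so the bad line and the $(\mathrm I)$ line already lie in $\Pi(1,x,y)$. Your opening step is also off: from injectivity of $\psi$ on $\Pi(1,x_0,x_1)$ one cannot conclude that the image contains ``infinitely many pairwise algebraically independent elements'' (injectivity says nothing about independence of values), and that detour, together with the vague appeal to $\mathfrak U\cap\ml'$, is unnecessary --- once the case analysis above is supplied, the final assertion follows directly: a nonconstant value on a bad line must be independent from one of two independent elements of $\psi(\mathfrak u)$, and the plane argument then yields the contradiction verbatim.
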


\begin{proof}
Let $\ml=\ml(1,y)$ be a line of type (I). 

If there exists another line $\ml(1,y')$
of type (I) with $\psi(y)\not\sim \psi(y')$, i.e., $\psi(\mathfrak u)$ contains
independent elements,  
then lines of the listed type cannot exist, 
indeed, if $\ml(1,x)$ is of types listed in \eqref{eqn:types}, 
we apply Proposition~\ref{prop:17} to $\Pi=\Pi(1,x,y)$.  
In Case (a), the exceptional line is $\ml(1,y)$ and hence
the restriction of $\psi$ to any other line is either constant or of type (F),
contradiction. In Case (b), all lines are either of type (I) or (F), again
a contradiction. Case (c) does not apply, since $\ml(1,y)$ is not induced
from a map $\P^2(\Z/p)\ra A$. Contradiction.  

If $\psi(\mathfrak u)$ does not contain algebraically independent elements, but one of 
the lines $\ml(1,x)$ in \eqref{eqn:types} is such that $\psi(x)\not\sim\psi(y)$
then we apply the same argument to $\Pi(1,x,y)$ and obtain the same contradiction. 
\end{proof}

\begin{lemm}
\label{lemm:24}
Assume that $\psi(\mathfrak u)$ contains algebraically independent elements. 
Consider $\ml:=\ml(1,y)\not\subseteq \mathfrak u$ and assume that
$\ml\cap \mathfrak U$ consists 
of at least two points $1,z'$. 
Then $\ml\cap \mathfrak U$ is either $\ml$ or  
$\ml\setminus \mq$, for some point $\mq\in \ml$. 
\end{lemm}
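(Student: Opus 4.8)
The goal is to understand $\ml\cap\mathfrak U$ for a line $\ml=\ml(1,y)$ not contained in $\mathfrak u$, knowing it contains $1$ and some $z'$. The plan is to exploit the defining property of $\mathfrak u$: every point of $\ml\cap\mathfrak U$ is a product $xy$ with $x,y$ on lines of injectivity through $1$, and every line of injectivity through $1$ that is ``transversal enough'' to our data lets us apply Proposition~\ref{prop:17}, Case (a) or (b), to the plane it spans with $\ml$. First I would pick a line $\ml(1,w)\subset\mathfrak u$ with $\psi(w)\not\sim\psi(y)$, which exists since $\psi(\mathfrak u)$ contains algebraically independent elements and $\ml\not\subset\mathfrak u$; then $\Pi:=\Pi(1,w,y)$ satisfies the hypothesis of Proposition~\ref{prop:17}. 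The restriction $\psi_\Pi$ is of type (a), (b), (c), or (d), but since $\ml(1,w)$ is a genuine line of injectivity (not induced from $\P^1(\Z/p)$), case (d) is excluded, and case (c) is excluded because $\ml(1,w)$ would have to be of the form $\bar\ml\setminus\bar\mq$ — so $\psi_\Pi$ is injective or constant off a line $\ml_0\subset\Pi$. In the injective case $\ml\subset\mathfrak u$, contradicting our assumption, unless $\ml$ meets $\Pi$ only partially — but $\ml\subset\Pi$ here, so actually $\psi_\Pi$ must be of type (b) with exceptional line $\ml_0$.

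\textbf{Key steps.} With $\psi_\Pi$ constant (equal to $1$, after the normalization in the proof of Proposition~\ref{prop:17}) on the affine plane $\Pi\setminus\ml_0$, I would analyze where $\ml$ sits relative to $\ml_0$. If $\ml=\ml_0$, then $\psi$ is injective on $\ml(1,w)$ and constant on $\Pi\setminus\ml_0$; every point $z\in\ml$ can be written as a ratio of two points of $\Pi\setminus\ml_0$ on a line through $z$ meeting $\ml_0$ only at $z$, so I would show each such $z$ lies in $\mathfrak u$ (the plane $\Pi(1,z,w')$ for suitable $w'\in\mathfrak u$ is again injective), giving $\ml\subset\mathfrak U$ — but this contradicts $\ml\not\subset\mathfrak u$ only if we've conflated $\mathfrak u$ and $\mathfrak U$; in fact this is the case $\ml\cap\mathfrak U=\ml$. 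If instead $\ml\neq\ml_0$, then $\ml\cap\ml_0=\{\mq\}$ is a single point, and for any $z\in\ml\setminus\mq$ I claim $z\in\mathfrak U$: choose $w\in\mathfrak u$ with $\psi(w)\not\sim\psi(z)$ — possible since $\psi(\mathfrak u)$ has independent elements — then apply Proposition~\ref{prop:17} to $\Pi(1,z,w)$; it is injective or type (b), and in either case $z$ is expressible as $s_1 s_2$ with $s_i\in\mathfrak u$ (using that $z$ and the generators lie on injectivity lines through $1$, possibly after the ratio trick of~\eqref{u-ratio}). Hence $\ml\setminus\mq\subseteq\ml\cap\mathfrak U$. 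It remains to rule out $\mq\in\mathfrak U$ forcing $\ml\cap\mathfrak U=\ml$ to be the only other option, which is automatic: $\ml\cap\mathfrak U$ is either all of $\ml$ or misses exactly the point $\mq$.

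\textbf{Main obstacle.} The delicate point is showing that an individual point $z$ of $\ml\setminus\mq$ actually lies in $\mathfrak U=\{xy\mid x,y\in\mathfrak u\}$, not merely that $\psi$ behaves well at $z$. This requires producing an explicit factorization $z=s_1 s_2$ with $s_1,s_2\in\mathfrak u$, which means finding lines of injectivity through $1$ passing through well-chosen auxiliary points; the argument mirrors the multiplicative-closure argument in Lemma~\ref{three-on-u} (the case analysis on whether $\psi(z)$ is algebraically dependent on the chosen $\psi(w)$, and the ratio trick~\eqref{u-ratio}). A second subtlety is the bookkeeping distinguishing $\ml\subset\mathfrak u$ from $\ml\subset\mathfrak U$: the hypothesis excludes the former, but the conclusion $\ml\cap\mathfrak U=\ml$ is about the latter, so one must be careful that the two cases $(\ml=\ml_0$ versus $\ml\neq\ml_0)$ correspond precisely to $(\ml\cap\mathfrak U=\ml$ versus $\ml\cap\mathfrak U=\ml\setminus\mq)$ and that no third behavior (e.g. $\ml\cap\mathfrak U$ a proper affine-plus-point set) can occur, which follows because the only exceptional structure Proposition~\ref{prop:17} allows on the relevant planes is a single missing line, intersecting $\ml$ in at most one point.
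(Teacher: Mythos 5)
Your outline reproduces the easy half of the argument, but it has a genuine gap: it silently assumes that on the line $\ml$ (and even at each individual point $z\in\ml$ you want to place in $\mathfrak U$) the map $\psi$ takes a value that is algebraically independent from something in $\psi(\mathfrak u)$. Your very first step, ``pick $\ml(1,w)\subset\mathfrak u$ with $\psi(w)\not\sim\psi(y)$'', and later ``choose $w\in\mathfrak u$ with $\psi(w)\not\sim\psi(z)$'', are impossible whenever $\psi(y)=1$, resp. $\psi(z)=1$, because the trivial class is dependent with everything ($1\sim x$ for all $x$). But the points of $\ml\cap\mathfrak U$ other than those coming directly from \eqref{u-ratio} are exactly points with $\psi$-value $1$ (if $z'=t/t'$ with $\psi(t)\neq\psi(t')$ then $z'\in\mathfrak u$ and $\ml\subset\mathfrak u$, contrary to hypothesis), and the hardest situation is $\psi_{\ml}$ constant $\equiv 1$. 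There the plane spanned by $\ml$ and an injectivity line need not contain two independent $\psi$-values at all, so Proposition~\ref{prop:17} cannot be invoked for it, and your ``exceptional structure meets $\ml$ in at most one point'' conclusion has no source. The paper's proof splits precisely along this line: for $\psi_{\ml}$ nonconstant it argues on the fixed plane $\Pi(1,x,y)$ (with $\psi(x)\not\sim\psi(y)$, $x\in\mathfrak u$) that $\psi$ is injective on every line avoiding the exceptional point, and uses $\ml(1,t'/t'')$ to put all of $\ml\setminus\mq$ into $\mathfrak U$; for $\psi_{\ml}$ constant it gives a separate, direct argument — writing points of $\ml\cap\mathfrak U$ as ratios $w/w'$ with $\psi(w)=\psi(w')$, proving injectivity on lines joining them to $\mathfrak u$-points of the plane, and then deriving a contradiction from the existence of two points $w_1,w_2\in\ml\setminus\mathfrak U$ by comparing the two families of flag lines through them. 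None of this second half appears in your plan, so the statement ``$\ml\cap\mathfrak U$ misses at most one point'' is unproved in the constant case.

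A smaller but real confusion: in your type-(b) analysis the subcase $\ml=\ml_0$ cannot occur, since the injective line $\ml(1,w)\neq\ml_0$ would then be constant off a single point, contradicting injectivity; so the exceptional line is forced to be $\ml(1,w)$ (or the exceptional structure is the cone of case (c)), and your dichotomy ``$\ml=\ml_0$ gives $\ml\cap\mathfrak U=\ml$, $\ml\neq\ml_0$ gives $\ml\setminus\mq$'' does not match the actual geometry. Likewise your reason for excluding case (c) (``$\ml(1,w)$ would have to be of the form $\bar\ml\setminus\bar\mq$'') conflates case (c) with the residual picture of case (d); the correct obstruction is the interaction of the cone point with the injectivity line, which needs to be argued. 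These could be repaired, but the missing constant case is the essential defect.
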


\begin{proof}
Assume that $\psi_{\ml}$ is not constant, e.g., $\psi(y)\neq 1$.  
By assumption, there is an $x$ with $\ml(1,x)\subset\mathfrak u$ with
$\psi(x)\not\sim \psi(y)$. We apply Proposition~\ref{prop:17} to 
$\Pi:=\Pi(1,x,y)$. We are not in Case (c) of this lemma. 
If we are in Case (a), then 
$\psi$ is constant on $\Pi\setminus \ml(1,x)$, which implies that $\ml$ is
of type (F). If we are in Case (b), then the exceptional point $\mq=y$, 
and $\psi$ is constant, on the complement to $\mq$,  
on every line through $\mq$, thus $\ml$ is of type (F).  

Put $z'= t/t'$, 
with $t,t'\in \mathfrak u$. 
If $\psi(t)\neq \psi(t')$ 
then Equation~\eqref{u-ratio} implies that $z'\in 
\mathfrak u$, a contradiction. 
Thus $\psi_{\ml}$ is either constant or contains one point 
$y'\notin \mathfrak U$. In Case (a), $\psi$ is constant on 
$\Pi\setminus \ml(1,x)$, thus identically 1 on the line $\ml$. 
In Case (b), $\psi$ is injective on every line 
not containing the exceptional point $\mq$, in particular on 
$\ml(1,t'/t'')$, for all $t''$, thus 
$t'/t''\in \mathfrak u$, thus $t''\in \mathfrak U$. 
Taking $t''\in \ml\setminus \mq$ we obtain the claim.

Now assume that $\psi_{\ml}$ is constant. We claim that   
$\ml\setminus (\ml\cap  \mathfrak U)$ 
contains at most one point. 
Assume otherwise, and pick $w_1,w_2$ in this set. 
Note that $\psi$ is injective on 
every line 
$\ml(u',t')\subset \Pi$, with $t'\in \Pi(1,x,y)\cap \mathfrak u$, $t'\neq 1$, 
and any point $u'\in \ml\cap \mathfrak U$.     
Indeed, we can represent $u'=w/w'$, with $w,w'\in \mathfrak u$ and
with $\psi(w)= \psi(w')\not\sim\psi(t')$. 
Then $t'w'/w\in \mathfrak u$ and 
$\ml(t'w'/w, 1)\subset \mathfrak u$.
The converse is also true, and $(\Pi\setminus \ml)\subset \mathfrak u$.
Indeed, consider lines  through $u'$ which are not equal to $\ml$; 
$\psi$ is injective on such lines. 

Now consider two families of lines: those 
passing through $w$ (except $\ml$), 
and those throgh $w'$ (again, except $\ml$). 
All such lines are of type (F), with generic value $\neq 1$, since $\psi$ 
does not take value 1 on $\Pi\setminus \ml$. 
Consider lines $\ml(w,v)$ and $\ml(w',v)$ from these families, with 
$v\in (\Pi\setminus \ml)$. The generic $\psi$-value on these lines 
is the same and equal $\psi(v)$. 
A line through $u'$, which does not contain $v$ 
cannot be of type (I), since it intersects lines 
$\ml(w,v)$ and $\ml(w',v)$  in distinct points, but taking the same value
on these points, contradicting the established fact that such lines are of type (I).   
\end{proof}

\begin{lemm}
\label{lemm:restriction2}
Assume  $\P(K)$ contains lines 
of type  $(\bar{\mathrm I})$ and 
there exist lines of type $\mathrm{(I)}$, or
$\mathrm{(N)}$, or $(\bar{\mathrm N})$.  
Then $\psi(\bar{\mathfrak u})$ 
does not contain algebraically independent elements. 
\end{lemm}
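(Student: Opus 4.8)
The plan is to argue by contradiction. Suppose $\psi(\bar{\mathfrak u})$ does contain two algebraically independent elements. Then by Lemma~\ref{lemm:up}, $\bar{\mathfrak U}$ is a group. Simultaneously, we are given the existence of a line $\ml_0$ of type $(\bar{\mathrm I})$, together with a line of one of the types $(\mathrm I)$, $(\mathrm N)$, or $(\bar{\mathrm N})$. The strategy is to play these two pieces of data against the structural trichotomy of Proposition~\ref{prop:17} for restrictions of $\psi$ to planes $\Pi = \Pi(1,x,y)$ on which $\psi(x) \not\sim \psi(y)$, exactly as in the proof of Lemma~\ref{lemm:restriction}.

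First I would reduce to the case where the two independent elements of $\psi(\bar{\mathfrak u})$ are witnessed by two lines $\ml(1,y_1)$, $\ml(1,y_2)$ of type $(\bar{\mathrm I})$ with $\psi(y_1) \not\sim \psi(y_2)$; this follows from the definition of $\bar{\mathfrak u}$ as a union of such lines and from the independence hypothesis. Now let $\ml(1,x)$ be a line of type $(\mathrm I)$, $(\mathrm N)$, or $(\bar{\mathrm N})$. Choose $y \in \{y_1,y_2\}$ with $\psi(x) \not\sim \psi(y)$, which is possible since $\psi(y_1)\not\sim\psi(y_2)$ cannot both be $\sim \psi(x)$. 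Apply Proposition~\ref{prop:17} to the plane $\Pi = \Pi(1,x,y)$. The key point is that the plane contains the line $\ml(1,y)$ which is induced from an injective map $\bar\psi_{\ml(1,y)}:\P^1(\Z/p)\to A$, hence is non-constant and in particular non-injective (as $\P^1(\Q)\to A$) but \emph{is} induced from a finite field; and it contains $\ml(1,x)$ which is either injective over $\Q$ (type $(\mathrm I)$), or not induced from a finite field at all (type $(\mathrm N)$), or induced from a finite field via a non-injective nonflag map (type $(\bar{\mathrm N})$). I then eliminate each of the four cases (a)--(d) of Proposition~\ref{prop:17}: case (a) is impossible because $\psi_\Pi$ injective would force $\ml(1,y)$ to be injective over $\Q$, contradicting that it factors through $\P^1(\Z/p)$ nontrivially; case (b) forces every line through the exceptional point to be of type $(\mathrm F)$ or constant, but $\ml(1,y)$ is neither; case (c) likewise forces all lines through $\mq$ to be of type $(\mathrm F)$-like behaviour, again incompatible with $\ml(1,y)$; and in case (d) the whole plane is induced from $\P^2(\Z/p)$, which forces $\ml(1,x)$ to be induced from $\P^1(\Z/p)$ — impossible for type $(\mathrm I)$ and type $(\mathrm N)$, and for type $(\bar{\mathrm N})$ one compares the induced map $\bar\psi_{\Pi}$ on $\P^2(\Z/p)$, which by Proposition~\ref{prop:17}(d) is of type $(a),(b),(c)$, against the restriction to the line $\bar\psi_{\ml(1,x)}$ which is non-injective nonflag, a contradiction with the same finite-field analysis.

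The main obstacle I anticipate is the bookkeeping in case (d): one must carefully track compatibility of the lattice $B \subset \Q^3$ inducing $\psi_\Pi$ with the lattices $B'\subset \Q^2$ inducing the restrictions to individual lines, and verify that a plane of "trivial" type over $\F_p$ cannot restrict to a nonflag (or genuinely $\Q$-injective) map on any line. This is exactly the sort of compatibility handled by Lemma~\ref{indunique -from line}, Lemma~\ref{lemm:eight}, and Corollary~\ref{coro:point}, so the argument is to invoke those rather than redo the combinatorics. A secondary subtlety is handling the degenerate possibility that $\psi(x) \sim \psi(y)$ for \emph{both} choices $y=y_1,y_2$ simultaneously with $\psi(x)\ne 1$: but then $\psi(y_1)\sim\psi(x)\sim\psi(y_2)$ would contradict $\psi(y_1)\not\sim\psi(y_2)$ (using transitivity of $\sim$ within a one-dimensional field and property (AG)), so this case does not arise once the independence of $\psi(y_1),\psi(y_2)$ is in hand.
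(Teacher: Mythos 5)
Your overall strategy --- place a type $(\bar{\mathrm I})$ line and a line of type $(\mathrm I)$, $(\mathrm N)$, or $(\bar{\mathrm N})$ in a common plane $\Pi(1,x,y)$ with $\psi(x)\not\sim\psi(y)$ and eliminate the cases of Proposition~\ref{prop:17} --- is exactly what the paper does for types $(\mathrm I)$ and $(\mathrm N)$, and your reduction to lines $\ml(1,y_1),\ml(1,y_2)$ of type $(\bar{\mathrm I})$ with independent values is fine (modulo re-choosing the generator of $\ml(1,x)$ so that $\psi(x)\neq 1$: since $1\sim$ everything, the transitivity argument you invoke at the end needs $\psi(x)$ nonconstant). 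The gap is in your eliminations of cases (b), (c), (d), which as stated miss precisely the subcases where real work is needed. In case (b) there is no exceptional point, only an exceptional line $\ml_0$; lines other than $\ml_0$ are constant off one point, so you must use both $\ml(1,x)$ and $\ml(1,y)$ (if $\ml(1,y)=\ml_0$ the contradiction must come from $\ml(1,x)$, which your sentence does not address --- easily repaired). More seriously, in case (c) your reasoning (``all lines through $\mq$ are flag-like, incompatible with $\ml(1,y)$'') gives nothing when the cone point $\mq$ lies on neither $\ml(1,x)$ nor $\ml(1,y)$, which is the typical situation since the two lines already meet at $1$; to close it you need the extra observation that in case (c) $\psi_\Pi$ factors through the projection from $\mq$, so the restrictions of $\psi$ to any two lines avoiding $\mq$ are projectively equivalent, hence of the same type --- contradicting that one is $(\bar{\mathrm I})$ and the other is not. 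The same subcases reappear inside case (d) at the level of $\P^2(\Z/p)$ (the reduced exceptional line or cone point may avoid both reduced lines, and the two lines may even reduce to the same line of $\P^2(\Z/p)$), and there you additionally need the uniqueness statement of Lemma~\ref{indunique -from line} to know that the type of a line over $\Q$ is inherited by the map on $\P^1(\Z/p)$ induced from the plane lattice; you name the right lemmas but none of this is carried out.

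It is also worth noting that the paper's route for the $(\bar{\mathrm N})$ case is genuinely different from yours: it does not argue from the statement of Proposition~\ref{prop:17} alone, but first proves that $\Pi(1,x,y)$ contains no lines of type $(\mathrm F)$, by an explicit dependence computation (choose $x_1,x_2\in\ml(1,x)$ and $y_1,y_2\in\ml(1,y)$ with suitable distinct values, intersect the four lines $\ml(x_i,y_j)$ with the putative $(\mathrm F)$-line, and deduce $\psi(x)\sim\psi(y)$, a contradiction), and only then applies Lemma~\ref{lemm:anew} to put the whole plane in the $\P^2(\Z/p)$-induced regime, where $(\bar{\mathrm N})$ versus $(\bar{\mathrm I})$ is contradicted directly. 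That detour is what spares the paper the exceptional-line/cone-point bookkeeping described above; if you keep your uniform appeal to Proposition~\ref{prop:17}, you must supply those arguments for cases (b), (c), and (d) yourself.
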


\begin{proof}
Assume the contrary. Let $\ml(1,x)$ be a line of type (I) or (N).  
Then there exists an $y\in \bar{\mathfrak u}$ 
such that $\psi(y)\not\sim\psi(x)$. 
We apply Proposition~\ref{prop:17} to $\Pi=\Pi(1,x,y)$ and
obtain a contradiction as in the proof of Lemma~\ref{lemm:restriction}.

Let $\ml(1,x)$ be of type ($\bar{\mathrm N}$).
We claim that $\Pi$ does not contain lines of type (F).  
To exclude this possibility, let $\ml=\ml(z,t)\in \Pi$ be such 
a line with generic $\psi$-value equal to $s\in A$.

Take points $x_1,x_2\in \ml(1,x)$ such that
$\psi(x_1)\neq \psi(x_2)$, this is possible since $\psi$ 
takes at least two values on $\ml(1,x)$. Choose 
$y_1,y_2\in \ml(1,y)$ and  $\psi(y_1)\neq \psi(y_2)$ and are both not equal to 
$1\in A$, this is possible because $\psi$ takes at least three values on 
$\ml(1,y)$ which is of type ($\bar{\mathrm I}$).

\centerline{
\xymatrix{
y_2 \ar@{-}[d]        & & \mathfrak{q}  \\
y_1 \ar@{-}[d]    & &                     \\
1 \ar@{-}[r]              &  x_1  \ar@{-}[r] & x_2
}}

\

Moreover, we can  assume that the lines $\ml_{ij}:=\ml(x_i,y_j)$ do not pass through
the distinguished point $\mq\in \ml(z,t)$ (where $\psi$ takes the nongeneric
value). Thus $\ml_{ij}:=\ml_{ij}\cap \ml(z,t)$ is a generic point of $\ml(z,t)$, 
which differs from $x_1,x_2,y_1,y_2$. Then 
$$
\frac{\psi(x_i)}{s} \sim \frac{\psi(y_1)}{s} \sim \frac{\psi(y_2)}{s},  
$$
for both $i=1,2$. 
Hence 
$$
1\neq \frac{\psi(x_1)}{\psi(x_2)}\sim  \frac{\psi(y_1)}{\psi(y_2)} \neq 1
$$
Therefore, $\psi(x)\sim\psi(y)$, contradiction.

Thus, for every $\ml\subset \Pi(1,x,y)$ the restriction 
$\psi_{\ml}$ is induced 
from a map $\bar{\psi}_{\ml}:\P^1(\Z/p)\ra A$.  
Now we apply Lemma~\ref{lemm:anew}. 
In Case (a) of that Lemma, 
the exceptional line is $\ml(1,y)$ and of type 
($\bar{\mathrm I}$) and hence
the restriction of $\psi$ to any other line is either constant or of type 
($\bar{\mathrm F}$),
contradiction the assumption that $\ml(1, x)$ is of type ($\bar{\mathrm N}$). 
Cases (b) and (c) are excluded: $\psi$ is not induced from an 
injective map, nor a flag map on $\ml(1,x)$.  
\end{proof}

\begin{lemm}
\label{lemm:N}
Assume that the pair of lines $(\ml(1,x), \ml(1,y))$ is of one of the following types
$$
(\mathrm{N}, \mathrm{N}), \quad (\mathrm{N}, \bar{\mathrm{N}}),
(\mathrm{N}, \bar{\mathrm{F}}), \quad (\bar{\mathrm{N}}, \bar{\mathrm{N}}).
$$ 
Then $\psi(x)\sim \psi(y)$. 
\end{lemm}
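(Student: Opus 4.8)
The plan is to argue by contradiction: suppose $\psi(x)\not\sim\psi(y)$ for a pair of lines of one of the four listed types. Then Proposition~\ref{prop:17} applies to the plane $\Pi=\Pi(1,x,y)$, and we must rule out each of its four outcomes (a)--(d). Case (a) is impossible since $\psi_{\Pi}$ injective would force $\psi_{\ml(1,x)}$ injective, contradicting that $\ml(1,x)$ is of type $(\mathrm N)$ or $(\bar{\mathrm N})$ (which are non-injective and non-flag, and in the barred case not even honestly defined on $\P^1(\Q)$ as an injection). For case (b), $\psi$ is constant on $\Pi\setminus\ml_0$ for some line $\ml_0$; a line not equal to $\ml_0$ then meets $\Pi\setminus\ml_0$ in an affine line on which $\psi$ is constant, so its restriction is a flag map (value: generic value, plus possibly one more point on $\ml_0$), and in particular $\psi_{\ml(1,x)}$ would be flag — contradicting type $(\mathrm N)$ or $(\bar{\mathrm N})$ unless $\ml(1,x)=\ml_0$; but if $\ml(1,x)=\ml_0$ then $\ml(1,y)\ne\ml_0$ (else $x,y$ collinear with $1$, contradicting $\psi(x)\not\sim\psi(y)$ via (TI)-type reasoning), so $\ml(1,y)$ is flag, again contradicting its type. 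Case (c) is handled the same way: if $\mq$ is the special point, any line through $\mq$ has constant restriction off $\mq$ hence is flag or constant, and any line avoiding $\mq$ — which includes at least one of $\ml(1,x)$, $\ml(1,y)$ unless both pass through $\mq$, a configuration one checks is incompatible with $\psi(x)\not\sim\psi(y)$ — is again shown to be flag by projecting from $\mq$.

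The real content is case (d), where $k=\Q$ and $\psi_{\Pi}$ is induced from $\bar\psi_{\Pi}:\P^2(\Z/p)\to A$ with $\bar\psi_{\Pi}$ of type (a), (b), or (c). First, $\psi_{\Pi}$ being induced from $\P^2(\Z/p)$ forces the restriction $\psi_{\ml}$ of $\psi$ to every line $\ml\subset\Pi$ to be induced from $\P^1(\Z/p)$ (restrict the inducing lattice to the corresponding $\Q^2\subset\Q^3$, using Lemma~\ref{indunique -from line} / the discussion around it). In particular $\psi_{\ml(1,x)}$ and $\psi_{\ml(1,y)}$ are induced from maps on $\P^1(\Z/p)$. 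This is already incompatible with a line of type $(\mathrm N)$, since type $(\mathrm N)$ by definition means $\psi_{\ml}$ is \emph{not} induced from any map on $\P^1(\Z/p)$. So the pairs $(\mathrm N,\mathrm N)$, $(\mathrm N,\bar{\mathrm N})$, $(\mathrm N,\bar{\mathrm F})$ cannot land in case (d) at all, and for those three pairs cases (a)--(c) already gave the contradiction — done. The remaining pair is $(\bar{\mathrm N},\bar{\mathrm N})$ in case (d): here both $\ml(1,x)$ and $\ml(1,y)$ have $\psi$-restriction induced from a non-injective, non-flag map on $\P^1(\Z/p)$. Now invoke Lemma~\ref{lemm:anew} (which is precisely the analogue of Proposition~\ref{prop:17} for the induced-from-$\P^2(\Z/p)$ situation, describing the three normal forms of $\bar\psi_{\Pi}$): in the type-(a) form of that lemma the exceptional line, reduced mod $p$, plays the role of $\ml_0$ and every other reduced line carries a flag (or constant) map, so the corresponding line in $\Pi$ is of type $(\bar{\mathrm F})$ or $(\mathrm C)$ unless it is the exceptional line — but at most one of $\ml(1,x),\ml(1,y)$ can be exceptional, forcing the other to be $(\bar{\mathrm F})$ or $(\mathrm C)$, contradicting $(\bar{\mathrm N})$; in the type-(b) and type-(c) forms of Lemma~\ref{lemm:anew} one runs the same projection argument downstairs on $\P^2(\Z/p)$ to see every reduced line is flag/constant, hence every line in $\Pi$ is $(\bar{\mathrm F})$, $(\bar{\mathrm I})$, or $(\mathrm C)$ — never $(\bar{\mathrm N})$ — again a contradiction.

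Assembling: in all four listed type-pairs, the assumption $\psi(x)\not\sim\psi(y)$ puts $\Pi(1,x,y)$ into one of the exhaustive cases (a)--(d) of Proposition~\ref{prop:17}, and in each case we derive that at least one of $\ml(1,x)$, $\ml(1,y)$ is forced to be injective, constant, flag, or induced-from-$\P^1(\Z/p)$ in a way excluded by its declared type $(\mathrm N)$ or $(\bar{\mathrm N})$. Hence $\psi(x)\sim\psi(y)$.

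The step I expect to be the main obstacle is the bookkeeping in case (d) with the $(\bar{\mathrm N},\bar{\mathrm N})$ pair: one has to be careful that "induced from a non-flag, non-injective map on $\P^1(\Z/p)$" is genuinely incompatible with every one of the three normal forms produced by Lemma~\ref{lemm:anew}, and in the normal forms of type (b)/(c) one must check that no reduced line through the distinguished point can carry a non-flag non-injective map — this uses that such a line's restriction is constant off the point, hence has at most two values and is flag. A secondary subtlety throughout is the edge configuration where $\ml(1,x)$ and $\ml(1,y)$ coincide or both pass through the special point/line; one must verify this cannot happen when $\psi(x)\not\sim\psi(y)$, which follows because collinearity with $1$ together with property (TI)/(TC) of $\psi_\sim$ (or directly, the structure of $\Pi(1,x,y)$) would force $\psi(x)\sim\psi(y)$.
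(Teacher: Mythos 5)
Your overall strategy --- assume $\psi(x)\not\sim\psi(y)$, apply Proposition~\ref{prop:17} to $\Pi(1,x,y)$, and rule out each outcome against the declared line types --- is exactly what the paper intends (its proof simply refers back to the arguments of Lemmas~\ref{lemm:restriction} and~\ref{lemm:restriction2}), and your cases (a) and (b) are fine. The gap is in the cone case: outcome (c) of Proposition~\ref{prop:17}, and its mod-$p$ incarnation inside outcome (d). You claim a line avoiding the distinguished point $\mq$ ``is again shown to be flag by projecting from $\mq$''. Projection from $\mq$ shows the opposite: on $\Pi\setminus\{\mq\}$ the map $\psi$ factors through the pencil of lines through $\mq$, so a line avoiding $\mq$ pulls back an essentially arbitrary map from that pencil and can perfectly well be non-injective and non-flag (values $1,a,a^{2},\dots$ at distinct points, say). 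So outcome (c) is not eliminated by what you wrote, and the same defect recurs downstairs: in the normal form of Lemma~\ref{lemm:anew} in which $\bar\psi_{\Pi}$ is a cone over a point of $\P^2(\Z/p)$, reduced lines avoiding the vertex need not be flag or constant, so your assertion that ``every reduced line is flag/constant'' does not exclude type $(\bar{\mathrm N})$ there.

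What actually kills the cone configurations is algebraic dependence, not flagness. If one of $\ml(1,x),\ml(1,y)$ passes through $\mq$ (or reduces to a line through the vertex), it is constant off one point (resp.\ induced from a flag or constant map), contradicting its declared type. If both avoid $\mq$ (resp.\ both reductions avoid the vertex), then both lines realize the \emph{same} set of $\psi$-values, namely the values of the pencil map; since all points of a line through $1$ lie in $\tilde k(x)$, resp.\ $\tilde k(y)$, their images are pairwise dependent, and a common nonconstant value forces $\psi(x)\sim\psi(y)$, the desired contradiction. This is the role played in the paper by the ratio/four-point argument of Lemma~\ref{lemm:restriction2} (using (AI) and Lemma~\ref{lemm:DE}), which your write-up does not replace. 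A smaller bookkeeping slip in case (d): two distinct lines through $1$ may have the same reduction mod $p$, so ``at most one of $\ml(1,x),\ml(1,y)$ can be exceptional'' is not automatic; when both reduce to the exceptional line one again needs the value-sharing/dependence argument rather than a type count.
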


\begin{proof}
Follows from the same arguments as in Lemma~\ref{lemm:restriction2}
and Lemma~\ref{lemm:restriction}.
\end{proof}

\begin{lemm}
\label{lemm:25}
Assume that $\psi(\bar{\mathfrak u})$ contains algebraically independent 
elements. Consider  
$\ml:=\ml(1,z)\not \subseteq \bar{\mathfrak u}$, and  assume that
$\ml\cap \bar{\mathfrak U}$ consists of 
at least two points $1,z'$.
Then $\psi(z')=1$ and $\ml\cap \bar{\mathfrak U}$ is either
\begin{enumerate}
\item $\ml$;
\item 
an affine line, with $\psi$ not constant on $\ml$;
\item projectively equivalent
to $\Z_{(p)}\subset \P^1(\Q)$;
\item 
an affine line and $\psi$ is constant on $\ml$.
\end{enumerate}
\end{lemm}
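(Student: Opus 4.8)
The strategy is to imitate the structure of Lemma~\ref{lemm:24}, replacing "$\psi$ injective on lines through $1$" by "$\psi$ induced via an injective map $\P^1(\Z/p)\to A$ on lines through $1$", and to use the $\Z_{(p)}$-lattice machinery of Section~\ref{sect:zp} (especially Lemma~\ref{indunique -from line} and Lemma~\ref{lemm:eight}) to control the shape of $\ml\cap\bar{\mathfrak U}$. First I would fix, using the hypothesis that $\psi(\bar{\mathfrak u})$ contains algebraically independent elements, a line $\ml(1,x)$ of type $(\bar{\mathrm I})$ with $\psi(x)\not\sim\psi(z)$, and apply Proposition~\ref{prop:17} to the plane $\Pi=\Pi(1,x,z)$. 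Since $\ml(1,x)$ is induced from $\P^1(\Z/p)$ but not from an injective or flag map once we pass to $\Pi$ unless... — more precisely, Proposition~\ref{prop:17}(c) cannot occur (a type-$(\bar{\mathrm I})$ line is not induced from a plane map with a point of constancy), and case (a) would force $\ml(1,x)$ itself to be the injective exceptional line, contradicting that $\psi(\bar{\mathfrak u})$ has independent elements while the generic values line up; so we land in case (b) or case (d). Case (b) gives a line $\ml'\subset\Pi$ with $\psi$ constant off $\ml'$: then arguing as in Lemma~\ref{lemm:24}, writing $z'=t/t'$ with $t,t'\in\bar{\mathfrak u}$, one gets $\psi(z')=1$ (so $z'$ lies on the exceptional line) and $\ml\cap\bar{\mathfrak U}$ is either all of $\ml$ or $\ml\setminus\mq$ — these are outcomes (1) and (2). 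Case (d), where $k=\Q$ and $\psi_\Pi$ is induced from $\P^2(\Z/p)$ via a lattice $B$, is the genuinely new feature and produces outcome (3).

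In Case (d) the plan is: the restriction of $\psi_\Pi$ to $\ml$ is induced from a map $\bar\psi_{\ml}:\P^1(\Z/p)\to A$ via the lattice $B\cap(\Q^2)$, and by Lemma~\ref{lemm:eight} together with Corollary~\ref{coro:point} the set of $\psi_\Pi$-compatible lattices is pinned down up to the three explicit equivalence classes. The subset $\bar{\mathfrak U}\cap\ml$ then corresponds to the preimage under $\rho_B$ of $\P^1(\Z/p)$ minus one point (the point where $\bar\psi_{\ml}$ takes its nongeneric value, which forces $\psi(z')=1$ after the shift normalizing $\mq=1$), which is exactly an affine copy of $\Z_{(p)}\subset\A^1(\Q)\subset\P^1(\Q)$ by Lemma~\ref{lemm:aff-proj} and the displayed linear description of $\rho$ following Corollary~\ref{coro:point}. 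This gives outcome (3). Outcome (4) is the degenerate subcase where $\psi_{\ml}$ is constant: then $\ml\subset\bar{\mathfrak u}$ is impossible by hypothesis, but $\ml\cap\bar{\mathfrak U}$ can still be an affine line, handled exactly as the constant case in the second half of the proof of Lemma~\ref{lemm:24} (the argument producing "at most one point off $\bar{\mathfrak U}$" via the two pencils of type-($\bar{\mathrm F}$) lines through $w$ and $w'$).

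I expect the main obstacle to be the bookkeeping in Case (d): one must verify that the lattice $B$ controlling $\psi$ on the whole plane $\Pi$ restricts compatibly to the lattice controlling $\psi$ on $\ml$, and that among the three equivalence classes of $f$-compatible lattices from Lemma~\ref{lemm:eight} the one relevant to $\bar{\mathfrak U}\cap\ml$ is the one giving $\Z_{(p)}$ rather than all of $\A^1$ or a point — this is where the hypothesis "$\ml\not\subseteq\bar{\mathfrak u}$ but $\ml\cap\bar{\mathfrak U}\supsetneq\{1\}$" is used in an essential way, via Lemma~\ref{indunique -from line}(2) (uniqueness of the compatible lattice when $\bar f$ is nonflag) to rule out the competing possibility that $\ml\cap\bar{\mathfrak U}$ is a full affine line in the lattice-induced case. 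The identification $\psi(z')=1$ in all cases — i.e.\ that the second distinguished point of $\ml\cap\bar{\mathfrak U}$ must be a zero of $\psi$ — should follow uniformly from Equation~\eqref{u-ratio} applied inside $\bar{\mathfrak u}$: if $z'=t/t'$ with $\psi(t)\neq\psi(t')$ then $z'\in\bar{\mathfrak u}$, contradicting $\ml\not\subseteq\bar{\mathfrak u}$ being the only obstruction once one checks $z'$ does not lie on the exceptional locus; hence $\psi(t)=\psi(t')$ and $\psi(z')=1$.
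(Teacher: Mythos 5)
Your handling of the nonconstant case is essentially the paper's: $\psi(z')=1$ comes from the ratio property \eqref{u-ratio} (if $\psi(t)\neq\psi(t')$ then $z'\in\bar{\mathfrak u}$, hence $\ml=\ml(1,z')\subseteq\bar{\mathfrak u}$, contradiction), one picks an auxiliary element of $\bar{\mathfrak u}$ with image independent of the nontrivial values on $\ml$, and the dichotomy between your cases (b) and (d) of Proposition~\ref{prop:17} (in the paper: $\ml$ of type $(\mathrm F)$ versus $(\bar{\mathrm F})$, with Lemma~\ref{lemm:anew} and the lattice analysis in the latter) produces the affine-line and $\Z_{(p)}$ outcomes. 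That part is sound.

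The genuine gap is your case ``$\psi_{\ml}$ constant,'' which you dispose of by copying the constant case of Lemma~\ref{lemm:24} and asserting that only outcome (4) can occur. This fails for two reasons. First, the Lemma~\ref{lemm:24} pencil argument forces ``at most one point of $\ml$ outside the group'' because the relevant lines through points of $\ml\cap\mathfrak U$ are genuinely injective (type $(\mathrm I)$); here they are only of type $(\bar{\mathrm I})$, injective after reduction mod $p$, so two such lines meeting at distinct points with equal $\psi$-values is no contradiction, and the set of missing points can carry $p$-adic structure. In fact the paper shows that when $\psi_{\ml}$ is constant the intersection $\ml\cap\bar{\mathfrak U}$ can be \emph{either} an affine line (outcome (4)) \emph{or} a set projectively equivalent to $\Z_{(p)}$ (outcome (3)); your case split ties (3) exclusively to the nonconstant lattice case and misses this. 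Second, in the constant case you cannot even set the argument up via Proposition~\ref{prop:17} or Lemma~\ref{lemm:anew}: since $\psi\equiv 1$ on $\ml$, the plane $\Pi(1,z,t)$ need not contain two algebraically independent nontrivial $\psi$-values, which is precisely the obstruction the paper flags. Its proof instead introduces the decomposition of $\Pi(1,z,t)$ into $S_T$ (nontrivial value), $S_1$ (trivial value, in $\bar{\mathfrak U}$), $S_2$ (trivial value, not in $\bar{\mathfrak U}$), verifies that every line meets only two of these sets, and then invokes the coloring/flag machinery (Proposition~\ref{prop:main}) to conclude that either $S_2\cap\ml$ is a single point or the decomposition is induced from $\P^2(\Z/p)$, yielding (4) or (3) respectively. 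Some argument of this kind is indispensable; as written, your proposal is incomplete exactly in this hardest case.
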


\begin{proof}
Assume that $\ml\notin$ (C). Write 
$z'= x/x'$ with $x,x'\in \bar{\mathfrak u}$. 
\begin{itemize}
\item 
If $\psi(x)\neq \psi(x')$ 
then $x/x'\in \bar{\mathfrak u}$, by Equation~\ref{u-ratio}, 
thus $\ml\subset \bar{\mathfrak u}\subset \bar{\mathfrak U}$,
contradiction, so that $\psi(z')=1$.
\item 
If $\psi(x)= \psi(x')\neq 1$,
let $t\in \bar{\mathfrak u}$ be such that $\psi(t)\not\sim \psi(x')$,
and is also independent from a nontrivial value on $\ml$.
Then $t/x' \in \bar{\mathfrak u}$ and the restriction of 
$\psi$
to (a shift of) $\ml(t,x/x')$ is of type ($\bar{\mathrm I}$).
In particular, $\ml(1,t),\ml(t,x/x')$ are also 
of type ($\bar{\mathrm I}$), by the same argument as in the 
proof of Lemma~\ref{lemm:restriction2}.
\end{itemize}
This lemma implies that $\ml$ is of type (F) or ($\bar{\mathrm F}$).

\begin{itemize}
\item 
$\ml\in$ (F).  In the notation of  
Proposition~\ref{prop:17}, 
$\psi$ is of type b) on $\Pi(1,z,t)$ and
the restriction of $\psi$ to every line in $\Pi(1,z,t)$, not passing
through a distinguished point $\mq\in \ml$, with $\psi(\mq)\neq 1$, is
of type ($\bar{\mathrm I}$), 
which implies that $\ml\setminus \mq \subset \mathfrak U$, 
i.e., we are in Case (V).  
\item 
$\ml\in$ ($\bar{\mathrm F}$). 
In this case,  $\Pi(1,t,z)$  does not contain
lines of type (F), because otherwise, by
Proposition~\ref{prop:17}, $\ml$ will also be of type (F).
Hence $\psi$ is induced from $\P^1(\Z/p)$ on any line
in $\Pi(1,t,z)$ and there are two independent values
of $\psi$ on $\Pi(1,z,t)$ not equal to 1.
Then $\psi$ on $\Pi(1,z,t)$
is induced from $\bar{\psi}: \P^2(\Z/p)\ra A$,
by Lemma~\ref{lemm:anew}. 

The map $\bar\psi$ is injective on 
$\ml(1,t')$ and $\ml(t',z'_1)$, where
both $t', z'_1$ are the images of $t,z'$
under the reduction map, and a flag map
on  $\ml(1,z_1)$, where $z_1$ is the image of $z$
in $\P^2(\Z/p)$.
Thus $\psi$ is induced from type b),
and hence $\mathfrak U\cap \ml$
consists of $y$, with $\psi(y)=1$, a set projectively equivalent
to $\Z_{(p)}\subset \P^1(\Q)$,  
and we are in Case (P).
\end{itemize}

Assume that $\ml\in$ (C). Here the difficulty is that
$\psi(\Pi(1,z,t))$
does not contain algebraically independent elements
and we cannot apply Lemma~\ref{lemm:anew}.
Note that $\ml(t,s)$, for $s=r/r',r,r'\in \mathfrak u$, $s\in \ml$,
are of type ($\bar{\mathrm I}$), 
by the argument above. 

Then any line $\ml(t',u')\subset \Pi(1,z,t)$,
with $\psi(t')\neq \psi(u')$, is of type ($\bar{\mathrm I}$), 
since $\psi$ takes at least three values
on this line. Hence $s:=\ml(t',u')\cap \ml\in \bar{\mathfrak U}$.

On the other hand, if $s'\in \ml$ is 
not in $\bar{\mathfrak U}$ then there are at most two values
on any line containing $s'$, including $\psi(s')=1$.
We split all points into subsets:
\begin{enumerate}
\item $S_T:=\{ x\, |\, \psi(x)\neq 1\}$;
\item $S_1:=\{ x\in \bar{\mathfrak U} \,|\, \psi(x)=1\}$;
\item $S_2:=\{ x\notin \bar{\mathfrak U}\, |\,  \psi(x)=1\}$.
\end{enumerate}
Note that $S_T, S_1\neq \emptyset$. 
If $S_2=\emptyset$ then $\ml\subset \bar{\mathfrak  U}$; 
and $\bar{\mathfrak U}\cap \ml$ satisfies the lemma.  

Assume that $S_2\cap \ml \neq \emptyset$.
We claim that every line in $\Pi(1,z,t)$ lies in the union of two
of such subsets. 
Clearly, this holds for $\ml$. 
Let $\ml'\subset \Pi(1,z,t)$ be a different line and put $s:=\ml\cap \ml'$. 
If $s\in \bar{\mathfrak U}$, then $\ml(s,t)\subset \bar{\mathfrak U}$, by construction, 
and all points $s\in \ml\cap S_T$ 
are in $\bar{\mathfrak u}$ and those with $\psi(s)=1$
in $\bar{\mathfrak U}$.
In particular, $\ml(s,t)\subset S_T\sqcup S_1$.
If $s'\in \ml$ is in $S_2$, then 
$\ml(s',x)$ is of type (F), ($\bar{\mathrm F}$) or (C), 
and hence $\psi$ takes at most two values on  $\ml(s',x)$, including $\psi(s')=1$.

If $s_2\in \ml(s',x),\psi(s_2)= 1, x\in \bar{\mathfrak u},\psi(x)\neq 1$, then
$s_2\in S_2$. Otherwise, if $s_2\in \mathfrak U, x\in\bar{\mathfrak  u}$, and then
$\psi$ is injective on $\ml(s',x)=\ml(s_2,x)$,
by the argument above.
Hence $s_2\in S_2$. Thus $\ml(s_2,t')$, with $t'\in \ml(s,t)$,
is contained either in $S_2\sqcup S_T$ or $S_1\sqcup S_2$. 

Any $y\in \Pi(1,z,t)$, with $\psi(y)\neq 1$, 
is contained
in $\bar{\mathfrak u}$. Indeed, consider
$\ml(y, y')$, with $\psi(y)\neq\psi(y'), y'\in \ml(t,s),\psi(y')\neq 1$, 
and $s_y:=\ml(y, y')\cap \ml(1,z)$. 
Then $\psi(s_y)=1$, hence $y'/s_y \in\bar{\mathfrak u}$, 
and $\psi$ is injective on
$\ml(y,y')$. Since $y'\in \bar{\mathfrak u}$, we 
find that $y\in \bar{\mathfrak u}$ 
and $s_y\in S_1$.

Thus $S_T\subset \bar{\mathfrak u}$ and any line
$\ml(y,s),$ with $\psi(s)=1$, is either contained in
$S_2\sqcup S_T$ or in $S_T\sqcup S_1$.
This implies that
any $\ml(s,s_2)$, with $s\in S_1,s_2\in S_2$, is
contained in  $S_1\sqcup S_2$.
Note that none of the lines is contained
in one of the subsets $S_T,S_1,S_2$.
By Proposition~\ref{prop:main}, 
the decomposition $\Pi=S_T\sqcup S_1\sqcup S_2$
is either
\begin{enumerate}
\item 
a cone
over the decomposition of $\ml(t,s)$ into the
intersection with $S_T$ and $S_1$, and 
$S_2$ is just one point in $\ml$;
\item  or is induced
from a decomposition of $\P^2(\Z/p)$ over
the residue of $\ml$,
with $S_1$ equal to the preimage
of a point, and hence $S_2\cap \ml$
is projectively equivalent to $\Z_{(p)}$.
\end{enumerate}
\end{proof}

%%%%%%%%%%%%%

\section{Proof of the main theorem}
\label{sect:proof}

We turn to the proof of Theorem~\ref{principal_theorem}, describing 
homomorphisms
$$
\psi: \P(K)\ra \P(L)
$$
preserving algebraic dependence.  
There are two possibilities:
\begin{itemize}
\item[(V)] $\psi$ factors through a valuation,
\item[(P)] $\psi$ factors through a subfield,
\end{itemize}
described in detail in the Introduction. 

\

We organize our proof as a case by case analysis, based on 
types of line, introduced at the beginning of Section~\ref{sect:new-lines}.    
We consider two sets of cases as follows.
\begin{itemize}
\item 
Generic cases: 
$\psi(\mathfrak u)$ (respectively, $\psi(\bar{\mathfrak u})$), 
contains nonconstant algebraically independent elements, i.e., 
there exist $y_1,y_2\in \psi(\mathfrak u)$ (respectively, 
$\psi(\bar{\mathfrak u})$)
such that $y_1\not\sim y_2$. 
\item 
Degenerate cases: these sets do not contain algebraically 
independent elements.
\end{itemize}

In our proof we need the following technical assumption: 
\begin{itemize}
\item[(AD)]  $\psi(\bar{\mathfrak u})$ does not
contain nonconstant algebraically independent elements.
\end{itemize} 
This is satisfied when $K$ has positive characteristic.

\begin{lemm}
\label{lemm:subcases}
Assume that $\psi(\mathfrak u)$ contains nonconstant algebraically dependent elements and that 
$\P(K)$ contains  lines of type $(\mathrm F)$ and possibly also $(\mathrm C)$. 
Then there exists a valuation $\nu$ of $K$ such that 
$\mo_{\nu}^\times\subseteq \mathfrak U$ and $\psi((1+\mm_{\nu})^\times)=1$. 
\end{lemm}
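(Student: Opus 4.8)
The goal is to extract a valuation ring from the set $\mathfrak U$. Since $\psi(\mathfrak u)$ contains algebraically independent elements, Lemma~\ref{three-on-u} tells us $\mathfrak U$ is a subgroup of $\P(K)=K^\times/k^\times$. By the remark following Condition~\ref{cond:flag}, it suffices to show that $\mathfrak U$ satisfies Condition~\ref{cond:flag}, and moreover that the alternative ``field'' case (1) of that condition is ruled out by the presence of a line of type $(\mathrm F)$ — the existence of a nonconstant flag line prevents $\psi$ from factoring through a subfield, so the subgroup $\mathfrak U$ must be of the form $\mo_{K,\nu}^\times/\mo_{k,\nu}^\times$ rather than $F^\times/k^\times$. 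Thus the first step is to verify Condition~\ref{cond:flag} for $\mathfrak U$ line by line.

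First I would dispose of lines $\ml=\ml(1,y)$ with $\ml\subseteq \mathfrak u$: such a line lies entirely in $\mathfrak U$, giving case (1) of Condition~\ref{cond:flag}. For a line $\ml\not\subseteq \mathfrak u$ meeting $\mathfrak U$ in at least two points, Lemma~\ref{lemm:24} applies directly and gives that $\ml\cap\mathfrak U$ is $\ml$ or $\ml\setminus\mq$, i.e. case (1) or (3). So the only remaining lines are those meeting $\mathfrak U$ in at most one point — these are consistent with case (2), a single point. (Case (4) cannot occur since we are under assumption (AD), or more directly since $\psi(\mathfrak u)$ containing independent elements forces the relevant lines to be of type $(\mathrm I)$ or $(\mathrm F)$ by Lemma~\ref{lemm:restriction}, never induced from $\P^1(\Z/p)$.) Hence $\mathfrak U$ satisfies Condition~\ref{cond:flag}, and being a subgroup not equal to some $F^\times/k^\times$ (because a type $(\mathrm F)$ line exists, on which $\psi$ is a nonconstant flag map, so $\psi$ cannot factor through a field — this is where lines of type $(\mathrm C)$ must also be checked to be harmless, as they impose no obstruction), it is of the form $\mo_{K,\nu}^\times/\mo_{k,\nu}^\times$ for a nontrivial valuation $\nu$ of $K$.

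It remains to prove $\psi((1+\mm_\nu)^\times)=1$. The group $(1+\mm_\nu)^\times/\mo_{k,\nu}^\times$ is precisely the set of $x\in\mo_\nu^\times$ with $\rho_\nu(x)=1$ in $\KK_\nu^\times/\kk_\nu^\times$, equivalently the elements that are ``generic'' on every line of $\P(K)$ through $1$ contained in $\mathfrak U$, i.e. the points where the flag map structure takes its generic value along every line through $1$ inside $\mathfrak U$. Concretely: for $x\in 1+\mm_\nu$, on each line $\ml\subset \mathfrak u$ through $1$, $x$ is a generic point (not the exceptional point $\mq$), and on type $(\mathrm I)$ lines $\psi$ is injective — but $\psi(1)=1$, so we must argue that $x$ in fact lies on enough lines forcing $\psi(x)$ to equal a value which is simultaneously the generic value on many lines through $1$, hence equal to $1$. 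The clean way: $\psi$ restricted to $\mo_\nu^\times/\mo_{k,\nu}^\times$ is a flag map (its restriction to every line is, by Lemma~\ref{lemm:restrict} applied inside $\mathfrak U$ together with the established structure), so by Proposition~\ref{prop:main} it factors through a valuation $\nu'$ of the field generated by $\mo_\nu^\times$; matching $\nu'$ with $\nu$ shows $\psi$ kills exactly $(1+\mm_\nu)^\times$ modulo $\mo_{k,\nu}^\times$.

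\textbf{Main obstacle.} The delicate point is the dichotomy ``field versus valuation'': showing that the presence of a type $(\mathrm F)$ line genuinely excludes the subfield case (P) and forces (V), and simultaneously handling the coexistence of $(\mathrm F)$ lines with $(\mathrm C)$ lines without the latter disrupting Condition~\ref{cond:flag} on $\mathfrak U$. I would expect the bulk of the work to be a careful examination, via Proposition~\ref{prop:17}, of planes $\Pi(1,x,y)$ with $x$ on a type $(\mathrm I)$ line and $y$ on a type $(\mathrm F)$ or $(\mathrm C)$ line, to pin down exactly which of cases (a)--(c) occurs and to confirm that the resulting combinatorics of $\mathfrak U\cap\ml$ is always (1), (2), or (3) — never a set that would only be consistent with a field. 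The triviality of $\psi$ on $(1+\mm_\nu)^\times$ is then comparatively routine once the valuation has been identified.
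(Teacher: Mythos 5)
Your first half (Lemma~\ref{three-on-u} plus Lemma~\ref{lemm:24} to pin down $\ml\cap\mathfrak U$ line by line) is in the spirit of the paper, but the second half has two genuine gaps. First, the dichotomy you lean on --- a subgroup satisfying Condition~\ref{cond:flag} and not of field type must be the unit group $\mo_{K,\nu}^\times/\mo_{k,\nu}^\times$ of a valuation --- is never proved as a standalone statement in the paper; the remark after Condition~\ref{cond:flag} is an informal pointer to Section~\ref{sect:proof}, i.e.\ to the very argument you are being asked to supply, so invoking it is circular. What the paper actually does is apply Proposition~\ref{prop:main} not to $\psi$ but to the quotient homomorphism $K^\times/k^\times\to K^\times/\mathfrak U$: Lemma~\ref{lemm:24} makes this quotient a flag map, the existence of type $(\mathrm F)$ lines makes it nontrivial, and Proposition~\ref{prop:main} then produces a valuation $\mu$ with a factorization $K^\times\to\Gamma_\mu\stackrel{r_\mu}{\to}K^\times/\mathfrak U$, which gives $\mo_\mu^\times\subseteq\mathfrak U$ for free. (Note also that the lemma only asserts the inclusion $\mo_\nu^\times\subseteq\mathfrak U$; your stronger claim that $\mathfrak U$ \emph{equals} such a unit group is neither needed nor established by your argument.)

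Second, the assertion $\psi((1+\mm_\nu)^\times)=1$ is not ``comparatively routine'', and your sketch of it is incorrect: the restriction of $\psi$ to $\mo_\nu^\times/\mo_{k,\nu}^\times$ is emphatically \emph{not} a flag map --- it is injective on all the lines making up $\mathfrak u$, which is precisely how $\mathfrak U$ was built --- so Proposition~\ref{prop:main} cannot be applied to that restriction, and ``matching $\nu'$ with $\nu$'' has no content. The paper's argument is concrete: for nonconstant $x\in\mo_\mu^\times$ and $y\in\mm_\mu$, Proposition~\ref{prop:17}, Case (c), applied to $\Pi(1,x,y)$ shows $\psi$ is flag on $\ml(x,y)$ with generic value $\psi(x)$, hence $\psi(1+\kappa y/x)=1$. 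Moreover this works directly only when $\Ker(r_\mu)\cap\Gamma_\mu^+=0$; when this kernel is nonzero the same comparison of $\mu$-values and $\psi$-values is used to show that $\Ker(r_\mu)^+$ is an initial (convex) part of $\Gamma_\mu^+$, and one must pass to the coarser valuation $\nu$ with $\Gamma_\nu=\Gamma_\mu/\langle\Ker(r_\mu)^+\rangle$ in order to have $\mo_\nu^\times\subseteq\mathfrak U$ and $\psi((1+\mm_\nu)^\times)=1$ simultaneously for one and the same valuation. This coarsening step is entirely missing from your proposal, and without it the two required properties need not hold for a single $\nu$.
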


\begin{proof}
By Lemma~\ref{three-on-u}, $\mathfrak U\subset K^\times/k^\times$ is a group, 
the induced quotient map 
$K^\times/k^\times\to K^\times/\mathfrak U$
is a nontrivial flag map, by the assumption on the existence of lines of type (F) in $\P(K)$. 
Thus there is a map 
$$
\mo_{\mu}^\times\to K^\times\to \Gamma_{\mu},
$$ 
for some valuation $\mu$, 
with the property
that  $K^\times \to K^\times/\mathfrak U$ is a composition 
$$
K^\times \to \Gamma_{\mu}\stackrel{r_{\mu}}{\lra} K^\times /\mathfrak U.
$$
Let 
$$
\Gamma_{\mu}^+:=  \nu(\mo_{\mu}\setminus 0)\subset \Gamma_{\mu}
$$ 
be the subsemigroup of positive elements and 
put 
$$
\Ker(r_{\mu})^+:=\Ker(r_{\mu})\cap \Gamma_{\mu}^+.
$$

\begin{itemize}
\item 
Assume that $\Ker(r_{\mu})^+=0$. Then for any nonconstant
$$
x\in \mo_{\mu}^\times/(k^\times\cap \mo_{\mu}^\times) \subset \mathfrak u, \quad 
y\in (\mm_{\mu}\setminus 0)/ (k^\times\cap \mo_{\mu}^\times),
$$
the restriction of $\psi$
to $\ml(x,y)$ is a flag map with generic value $1$
by Proposition~\ref{prop:17}, Case (c), with $y=\mq$, 
hence the result holds for $\nu=\mu$.
\item 
Assume that $\Ker(r_{\mu})^+\neq 0$. 
Assume in addition that there exists a $\gamma^+\in (\Gamma_{\mu}^+\setminus \Ker(r_{\mu})^+)$
and such that $\gamma^+<\gamma'$ for some $\gamma'\in \Ker(r_{\mu})^+$. 
Consider $x\in (\mathfrak u \setminus  1)$, 
with $\mu(x)= \gamma'$,  and $y\in \mo_{\mu}^\times/ (k^\times\cap \mo_{\mu}^\times)  $, with  
$\mu(y)=\gamma^+$. 
The restriction of $\psi$ to $\ml(1,y)\subset \P(1,x,y)$ is  a flag map with generic value $1$.
On the one hand, $\ml:=\ml(x,y)\not\subset\mathfrak u$, 
hence $\psi_{\ml}$ is a flag map, with generic value $\psi(x)$.
On the other hand, the generic value of $\psi$ on $\ml(1,y)$ is $1$,
hence $\psi(x+y)= \psi(x)$ and $x+y\in \mathfrak u$.
We have $\mu(y) < \mu(x)$ and, on $\ml(x,y)$, we have 
$\mu(x+y)=\mu(y)$, hence $\psi(x+y)=\psi(y)$, contradiction.

This implies that elements of $\Ker(r_{\mu})^+$ are smaller than all elements in $(\Gamma_{\mu}^+ \setminus \Ker(r_{\mu})^+)$. Thus 
the subgroup of $\Gamma_{\mu}$ generated by $\Ker(r_{\mu})^+$ is an ordered subgroup. 
The homomorphism $ \Gamma_{\mu}\to \Gamma_{\mu}/ \Ker(r_{\mu})^+$ identifies
$\Gamma_{\mu}/\Ker(r_{\mu})^+$ with a valuation group $\Gamma_{\nu}$ for some valuation $\nu$ of $K$, and 
$\psi((1+\mm_{\nu})^\times)=1$.
\end{itemize}
\end{proof}

We can also treat all degenerate cases, i.e., $\psi(\mathfrak u)$ and $\psi(\bar{\mathfrak u})$ do not contain 
nonconstant algebraically independent elements. 

\

{\em Most degenerate case: no $(\mathrm{I})$, $(\bar{\mathrm{I}})$, $(\mathrm{N})$, and 
$(\bar{\mathrm{N}})$-lines}: 
\begin{itemize}
\item
Then $\psi$ is a flag map on all $\ml\subset \P(K)$, hence a flag map,
and there exists a valuation $\nu$ such that $\psi$ factors through 
$\Gamma_{\nu}$, and we are in Case (V) of the Theorem~\ref{principal_theorem}, since $\psi(\mo_{\nu}^\times)=1$. 
\end{itemize}

\

{\em Degenerate case: no $(\mathrm{I})$ and $(\bar{\mathrm{I}})$-lines, but $(\mathrm{N})$ or $(\bar{\mathrm{N}})$-lines }: 
\begin{itemize}
\item 
If there exist (N) or $(\bar{\mathrm{N}})$-lines then, by Lemma~\ref{lemm:N}, 
there exists a 1-dimensional subfield $L_1\subset L$ such that the images of
all such lines are contained in $L_1^\times/l^\times$. Consider the induced projection homomorphism
$$
\psi_1:\P(K)\ra L^\times/l^\times \ra L^\times/L_1^\times.
$$
Note that the restriction of $\psi_1$ to any line $\ml\in \P(K)$ is a flag map, and there exist lines on which it is a nontrivial flag map, since the image of $\psi$ contains at least two algebraically independent elements. Thus there is a nontrivial valuation $\mu$ of $K$
such that $\psi_1$ factors through the value group $\Gamma_{\mu}$. 
\end{itemize}

\

{\em Degenerate case: 
there exist $(\mathrm{I})$-lines $\ml$ and $\psi(\ml)\subset L_1^\times/l^\times$, for 
some 1-dimensional field $L_1\subset L$.}

\begin{itemize}
\item
Let $L_2$ be the algebraic closure of $L_1$ in $L$.
There
may also 
exist lines $\ml\subset \P(K)$ of type (N),  $(\bar{\mathrm{N}})$,  $(\bar{\mathrm{I}})$, or  $(\bar{\mathrm{F}})$, 
with respect to $\psi$,  
but $\psi(\ml)\subset L_2^\times/l^\times$ for all such $\ml$, by Lemma~\ref{lemm:restriction}. 
Again, every $\ml\subset \P(K)$ is either of type (C) or (F), with respect to 
$$
\psi_2: \P(K)\ra L^\times/l^\times \ra L^\times/L_2^\times,
$$
and there exists a nontrivial valuation $\mu$ of $K$ such that 
 $\psi_2$ factors through $\Gamma_{\mu}$.
 \end{itemize}

\

{\em Degenerate case: there are no $(\mathrm{I})$--lines but 
there exist $(\bar{\mathrm{I}})$-lines whose
images are contained in $L_1^\times/l^\times$, for some 1-dimensional subfield of $L$.} 

\begin{itemize}
\item 
Let $L_2$ be its algebraic closure in $L$.  There may exist lines of type
(N),  $(\bar{\mathrm{N}})$,  or $(\bar{\mathrm{I}})$, but their images are contained in 
$L_2^\times/l^\times$.  
Every $\ml\subset \P(K)$ is of type (C), (F), or $(\bar{\mathrm{F}})$, with respect to
$$
\psi_2: \P(K)\ra L^\times/l^\times \ra L^\times/L_2^\times,
$$
and there exists a nontrivial valuation $\nu$ of $K$ such that 
 $\psi_2$ factors through $\Gamma_{\mu}$.
 \end{itemize}

\

Thus, in all the degenerate cases the homomorphism
$$
\psi_{\ell}: K^\times/k^\times \ra L^\times/L_2^\times,
$$
is a flag map, thus arises from a  
nontrivial valuation $\mu$,

\centerline{
\xymatrix{
1 \ar[r] & \mo_{\nu}^\times \ar[r] & K^\times \ar[r]^{\mu} \ar@{=}[d]             &                     \Gamma_{\mu} \ar[r] \ar[d]^{r} & 1 \\
            &                                    & K^\times           \ar[r]^{\psi_{\ell} \,\,\,\, }            & L^\times/L_2^\times             &     
}
}

\noindent
i.e., $\psi_{\ell}= r\circ \mu$ 
The following Lemmas will show that $\psi$ is either as in (V) or (VP) of 
Theorem~\ref{principal_theorem}.

\begin{lemm}
\label{lemm:improve}
There is a valuation $\nu$ of $K$ and a surjective homomorphism of ordered groups
$$
\Gamma_{\mu}\stackrel{\gamma}{\lra}\Gamma_{\nu}
$$
such that
\begin{enumerate}
\item $\nu = \gamma\circ \mu: K^\times\to \Gamma_{\nu}$
is a surjective map of ordered groups with $\Ker(\gamma)\subset \Ker(r)$.
\item  $\psi((1+ \mm_{\nu})^\times)= 1$.
\end{enumerate}
\end{lemm}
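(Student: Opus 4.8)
The goal is to ``improve'' the valuation $\mu$ (produced from the flag map $\psi_\ell$) to a valuation $\nu$ whose unit group is killed, up to $1+\mm_\nu$, by $\psi$ itself, not merely by $\psi_\ell$. The first step is to identify the obstruction: since $\psi_\ell = r\circ\mu$, the restriction of $\psi$ to $\mo_{\mu}^\times/\mo_{k,\mu}^\times$ takes values in $L_2^\times/\tilde l^\times$, so we get an induced homomorphism $\psi_\mu:\KK_\mu^\times/\kk_\mu^\times \to L_2^\times/\tilde l^\times$ (after checking $\psi$ is trivial on $(1+\mm_\mu)^\times$, which is immediate because each such element lies on a line through $1$ inside $\mo_\mu^\times$ where $\psi_\ell$, hence $\psi$ modulo $L_2^\times$, is already trivial — one must verify $\psi$ itself is trivial there using that lines of type $(\mathrm F)$, $(\mathrm C)$ are the only ones occurring, cf.\ the argument in Lemma~\ref{lemm:subcases}). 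We then analyze $\psi_\mu$ exactly as in the degenerate cases, but one dimension down: the residue field $\KK_\mu$ carries the induced $\psi_\mu$ preserving algebraic dependence (Lemma~\ref{lemm:dontneed} runs in reverse here to justify the relevant dependence structure), and either $\psi_\mu$ is already trivial on $\mo$-units in an appropriate sense, or it again factors through a valuation $\bar\nu$ of $\KK_\mu$.

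The second, central step is the composition of valuations. Given the valuation $\bar\nu$ on the residue field $\KK_\mu$ with value group $\Gamma_{\bar\nu}$, one forms the composite valuation $\nu$ on $K$: its value group $\Gamma_\nu$ sits in an extension $1\to\Gamma_{\bar\nu}\to\Gamma_\nu\to\Gamma_\mu/\langle\text{appropriate part}\rangle\to 1$ — concretely $\Gamma_\nu = \Gamma_\mu/\Delta$ for the smallest convex subgroup $\Delta\subset\Gamma_\mu$ on which things are not yet resolved, an entirely standard construction in valuation theory. The map $\gamma:\Gamma_\mu\to\Gamma_\nu$ is the quotient, which is automatically a homomorphism of ordered groups since $\Delta$ is convex, and $\mo_\nu^\times/\mo_{k,\nu}^\times$ is the preimage in $\mo_\mu^\times$ of $\mo_{\bar\nu}^\times\subset\KK_\mu^\times$. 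The condition $\Ker(\gamma)\subset\Ker(r)$ is built in: $\Delta$ is chosen inside $\Ker(r)$ — this is where the hypothesis $\psi_\ell=r\circ\mu$ is used, guaranteeing the part of $\Gamma_\mu$ we quotient by is already invisible to $\psi_\ell$, hence the larger valuation $\nu$ still refines $\psi_\ell$.

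The third step verifies property (2), $\psi((1+\mm_\nu)^\times)=1$. An element of $1+\mm_\nu$ is a unit for $\mu$ whose residue in $\KK_\mu$ lies in $1+\mm_{\bar\nu}$; by the one-dimension-down analysis (the residue-field analogue of Lemma~\ref{lemm:subcases}, i.e.\ applying Proposition~\ref{prop:17} Case~(c) to lines in the plane spanned by $1$, a $\mu$-unit of the right residue, and an element of $\mm_{\bar\nu}$) the value of $\psi$ there is the generic value $1$ on a flag line. One then bootstraps from the residue-field statement back to $K$ using that $\psi$ on $\mo_\mu^\times/\mo_{k,\mu}^\times$ factors through $\KK_\mu^\times/\kk_\mu^\times$, so triviality on $1+\mm_{\bar\nu}$ downstairs forces triviality on $1+\mm_\nu$ upstairs.

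I expect the main obstacle to be the bookkeeping in the second step: correctly isolating the convex subgroup $\Delta\subset\Ker(r)\cap\Gamma_\mu^+$, checking it is convex (so that the quotient is ordered and the composite $\nu$ is genuinely a valuation), and confirming that the residue-level homomorphism $\psi_\mu$ really does fall into one of the already-treated degenerate configurations rather than, say, acquiring algebraically independent values in $\psi(\mathfrak u)$ — here one leans on the standing assumption (AD) and on Lemma~\ref{lemm:restriction}, Lemma~\ref{lemm:restriction2} applied to lines inside the residue field. The verification that $\psi$ (as opposed to $\psi_\ell$) already vanishes on $(1+\mm_\mu)^\times$, needed before one can even speak of $\psi_\mu$, is a small but essential preliminary that I would dispatch first via Case~(c) of Proposition~\ref{prop:17}.
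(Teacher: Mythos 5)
There is a genuine gap, and it occurs at the step you propose to ``dispatch first''. You assume that $\psi$ is trivial on $(1+\mm_{\mu})^\times$, so that $\psi$ descends to a homomorphism $\psi_{\mu}$ of $\KK_{\mu}^\times$, and you call this essentially immediate. But $\psi_{\ell}=r\circ\mu$ only controls $\psi$ modulo $L_2^\times$: for $y\in\mm_{\mu}$ with $\mu(y)\in\Ker(r)$, the whole line $\ml(1,y)$ is $\psi_{\ell}$-trivial, and nothing prevents $\psi$ from being nonconstant there with values in $L_2^\times/\tilde{l}^\times$ --- in the degenerate cases this is exactly what lines of type $(\mathrm I)$, $(\bar{\mathrm I})$, $(\mathrm N)$, $(\bar{\mathrm N})$ with image in $L_2^\times/\tilde{l}^\times$ look like, and Proposition~\ref{prop:17}(c) cannot be invoked inside $\mo_{\mu}^\times$ because the $\psi$-values of such planes need not contain independent elements (nor does Lemma~\ref{lemm:subcases} apply, since we are not in the generic case). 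If your preliminary claim were available, the lemma would be vacuous: take $\nu=\mu$ and $\gamma=\mathrm{id}$. The whole point of the statement is that triviality of $\psi$ on $1+\mm$ can only be secured after coarsening $\mu$, and the paper's proof obtains $\psi(1+y)=1$ only for those $y$ whose value $\mu(y)$ either lies outside $\Ker(r)$ or exceeds some positive element outside $\Ker(r)$; this is proved by a computation absent from your plan: flag behaviour of $\psi$ on lines $\ml(z,x)$ with $r(\mu(z))\neq 1$ and $x\in\mo_{\mu}^\times$ gives $\psi(1+az)=1$, and the identity $(1+z)(1+z')=(1+z+z')\bigl(1+\tfrac{zz'}{1+z+z'}\bigr)$ propagates this multiplicatively.

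Your second step is also structurally off. Composing $\mu$ with a valuation $\bar\nu$ of the residue field $\KK_{\mu}$ produces a \emph{refinement} of $\mu$: its ring is contained in $\mo_{\mu}$ and its value group surjects onto $\Gamma_{\mu}$, whereas the lemma demands a surjection $\Gamma_{\mu}\to\Gamma_{\nu}$, i.e.\ a \emph{coarsening}. Writing ``concretely $\Gamma_{\nu}=\Gamma_{\mu}/\Delta$'' contradicts the extension $1\to\Gamma_{\bar\nu}\to\Gamma_{\nu}\to\cdots\to 1$ you state just before; these are two different constructions and only the quotient one is relevant here. The paper's argument never touches the residue field: it isolates the set $S$ of positive elements of $\Ker(r)$ that are smaller than every positive element outside $\Ker(r)$, shows by an order argument (if $s,s'\in S$ and $s+s'>u>0$ with $r(u)\neq 1$, then $0<u-s'<s$ and $r(u-s')\neq 1$, a contradiction) that $S$ is a subsemigroup, so $\langle S\rangle\subset\Ker(r)$ is convex, and sets $\Gamma_{\nu}=\Gamma_{\mu}/\langle S\rangle$; then every $1+y$ with $\nu(y)>0$ falls under the computation above, which yields property (2). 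Your plan contains neither the identification of this convex subgroup nor the triviality computation it feeds on, so the proof does not go through as proposed.
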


\begin{proof}
Let $z\in \mo_{\mu}^\times$  be such that
$r(\mu(z))\neq 0$ and thus $\psi_\ell(z)\neq 1\in L^\times/L_2^\times$.
Let $x\in \mo_{\mu}^\times \subset \Ker(\psi_\ell)$. 
We have 
$$
\mu(x+ az)=\mu(x),\mu(a)\geq 0,
$$ 
and 
$r$ is nonconstant on $\ml(z,x)$.
Thus $\psi$ is a flag map on $\ml$, and 
$$
\psi(x+ az)=\psi(x)
$$ 
so that 
$\psi(1+ az/x)= 1$. Note that $zx$ also
has $r(\mu(zx))\neq 0$ and hence we can apply the same to
$zx$, obtaining $\psi(1+ az)= 1$, for any $z$ with $r(\mu(z)) > 0$.

Note that elements $z$ with $\nu(z)= \alpha$ generate additively
the subgroup $K_{\alpha}\subset K$.
Now the elements of the form $1+z$ with $\nu(1+  z)=0$ 
generate the multiplicative
subgroup $(1 + K_{\alpha})^\times$.
Indeed, consider 
$$
(1+z)(1+z')= 1+z+z'+zz'= (1+z+z') \left(1+\frac{zz'}{1+z+z'}\right),
$$
where $\mu(z)=\mu(z')$ and $(1+z+z')\in \mo_{\mu}^\times$.
Since $\psi_l(zz')\neq 1$ we have 
$$
\psi\left(1+\frac{zz'}{1+z+z'}\right)= 1,
$$
by the same argument applied  to $z,z'$; thus
$\psi\equiv 1 $ on $(1 + K_{\alpha})^\times$.
This implies that $\psi(1+y)= 1$, even if $r(\nu(y))= 0$ but
there is a $z,r(\nu(z))\neq 1$ and $\nu(z) < \nu(y)$.
Consider the subset $\Gamma_{\mu}^+,\mu \geq 0$ in $\Gamma_{\mu}$.
Since $L^\times/L_2^\times$ is torsion-free, 
$$
\rk_{\Q}(\Ker(r))<\rk_{\Q}(\Gamma_{\mu}).
$$ 
Hence it intersects $\Gamma_{\mu}^+$ in a proper subsemigroup $\Ker(r_{\mu})^+$
and the subset of elements $s\in \Ker(r_{\mu})^+$ with $s > \mu(x)$
for any $x\in \Gamma_{\mu}^+ - \Ker(r_{\mu})^+$.

We are looking a subset of elements $S$ inside $\Ker(r_{\mu})^+ -0$
such that for each $s\in S$ such that $s < u$ for any
$u >0$ with $r(\mu(u)) \neq 0$.
Note that $S$ has to contain smallest elements in
$\Gamma_{\mu}^+ \setminus 0$ if there  are ones.
Assume that $s,s'\in S, s,s' < u,r(u)\neq 0$ and $s+s' > u$.
Note that $s + s'- u > 0$ and $s > u- s'> 0$ but
$r(u- s')\neq 0$ which provides a contradiction.
Thus $S$ is an ordered subsemigroup in $\Ker(r_{\mu})^+ -0$
which generates an ordered subgroup $\langle S\rangle $ such that 
$$
K^\times \ra \Gamma_\mu/\langle S\rangle  =:\Gamma_\nu
$$
is a valuation map for some valuation $\nu$. For this valuation, $\Ker(\nu)\supset (1+\mm_{\nu})^\times$, 
by the computation above. 
\end{proof}

\bibliographystyle{plain}
\bibliography{fieldhom}

\end{document}